\newtheorem{theorem}{Theorem}
\theoremstyle{plain}
\newtheorem{cor}{Corollary}
\newtheorem{lemma}{Lemma}
\newtheorem{proposition}{Proposition}
\theoremstyle{definition}
\newtheorem{definition}{Definition}
\newtheorem{example}{Example}
\newtheorem{remark}{Remark}
\newcommand{\R}{{\mathbb R}}
\newcommand{\oo}{\overleftarrow}
\newcommand{\ot}{{\overleftarrow{\theta}}}
\numberwithin{equation}{section}
\begin{document}
\title{Tilings from Graph Directed Iterated Function Systems}
\author{Michael Barnsley and Andrew Vince}
\address{Mathematical Sciences Institute, Australian National University, Canberra,
ACT, Australia \\
michael.barnsley@anu.edu.au}
\address {Department of Mathematics, University of Florida, Gainesville, FL, USA \\
avince@ufl.edu}
\date{}

\keywords{tiling, graph directed iterated function system, self-similar}

\subjclass[2020]{52C20, 52C22, 05B45, 28A80}

\begin{abstract}
A new method for constructing self-referential tilings of Euclidean space from
a graph directed iterated function system, based on a combinatorial structure
we call a pre-tree, is introduced. In the special case that we refer to as
balanced, the resulting tilings have a finite set of prototiles, are
quasiperiodic but not periodic, and are self-similar. A necessary and
sufficient condition for two balanced tilings to be congruent is provided.

\end{abstract}

\maketitle

\section{Introduction}

Iterated function systems (IFSs) have been at the heart of fractal geometry
almost from its origins, and the attractor $A$ of the IFS has played the
central role in the theory. The attractor is the union of contracted images of
itself. In the case that the contractions are similarities, the attractor is
the union of smaller similar copies of itself. See \cite{hutchinson} for formal background on iterated function systems
(IFS).   Here we are concerned with a generalization of an IFS,  called a \textit{graph directed iterated function system} (GIFS).  A GIFS
generalizes the IFS concept so that the attractor has components $A_{1}, A_{2},\dots,A_{n}$, each component $A_{i}$ the union of contracted copies of
the components $A_{1},A_{2},\dots,A_{n}$. The prescription for how this is
done is encoded in a directed graph (with possibly multiple edges and loops),
the precise definition of a GIFS given in Section~\ref{sec:def1}. An IFS is the special case of a GIFS
in which the digraph has a single vertex.   Early work related to GIFSs  includes
\cite{ bandtTILE, barnsleyFE2,bedfordGraph, das, dekking, MW, werner}. In
some of these works, a  graph IFS are referred to as a recurrent IFS.

The paper \cite{BV1} introduced a method for constructing certain tilings
using an IFS, in which the tiles are related to the attractor of the IFS.
This construction included, as special cases, digit tilings \cite{V} (see the
right panel of Figure~\ref{fig:chair} for the twin dragon tiling),
crystallographic tilings \cite{G}, certain substitution tilings like the
\textquotedblleft chair tiling" (see the left panel of Figure~\ref{fig:chair}%
), and new tilings (see, for example, Figure~\ref{fig:new}). In \cite{BV2}
this method was engineered to obtain tilings of Euclidean space having the
properties of repetitivity (quasiperiodicity) and self-similarity (whose definitions appear later
in this introduction). In these tilings there are finitely many tiles up to
congruence, and all tiles are similar to the attractor of the IFS (see
Figure~\ref{fig:EX3}).

\begin{figure}[htb]
\includegraphics[width=4.5cm, keepaspectratio]{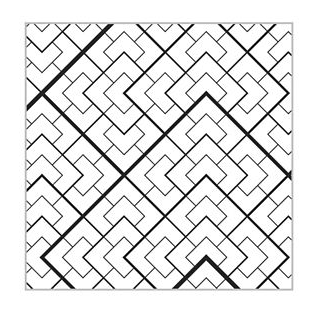}\hskip 1cm \includegraphics[width=4cm, keepaspectratio]{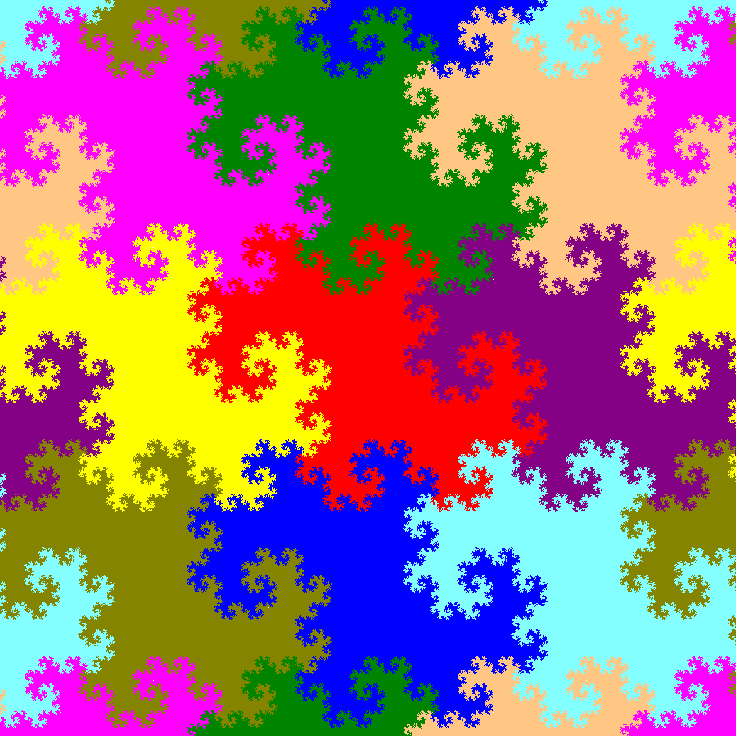}
\caption{the chair tiling and the twin dragon tiling.}
\label{fig:chair}
\end{figure}

\begin{figure}[htb]
\includegraphics[width=9cm, keepaspectratio]{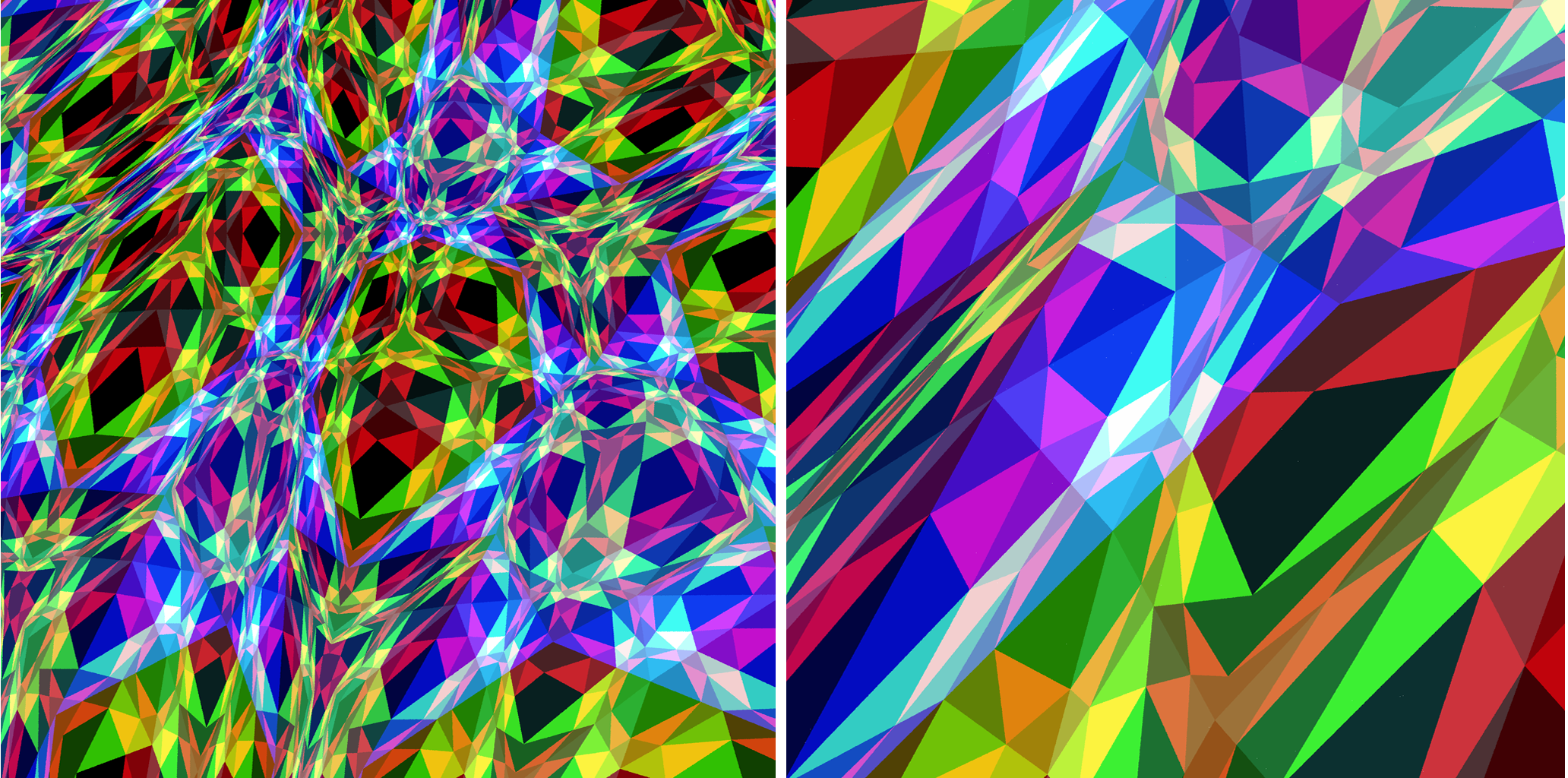}
\caption{Two views of the same IFS tiling with a triangular attractor. All tiles are triangles (the black
quadrilateral is the union of two black triangular tiles).}
\label{fig:new}
\end{figure}

\begin{figure}[htb]
\vskip 3mm
\includegraphics[width=7cm, keepaspectratio]{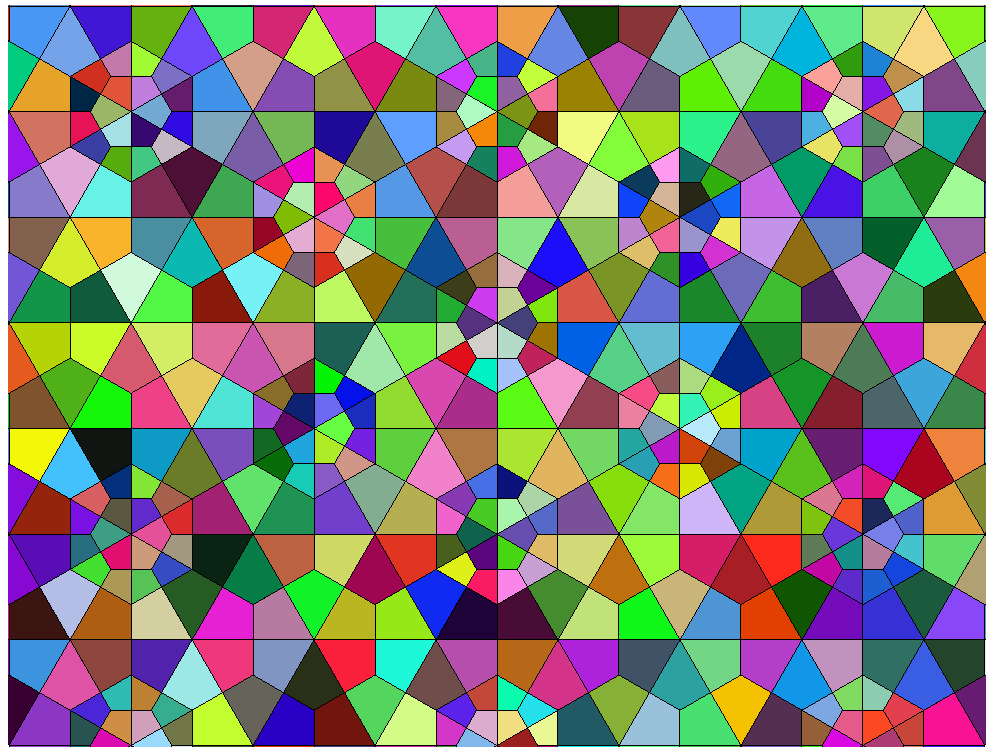}
\caption{A tiling by copies of tiles similar to the attractor of an IFS.}
\label{fig:EX3}
\end{figure}

The goal of this paper is twofold. First, using a GIFS, we introduce a new
unifying method for the construction of tilings of $\R^d$. The method is based on a
combinatorial structure in the underlying directed graph that we call a
\textit{pre-tree}. This notion is defined precisely in Definition~\ref{def:pt} in Section~\ref{sec:pt}. Our point of view is more graph theoretic than is usual in papers on
fractals via iterated function systems; so for basic notions about graphs see \cite{D}. The approach in this paper is a
substantial extension of some concepts in \cite{BV1} and \cite{BV2}.  Other papers which relate to the use of GIFS to construct tilings include
\cite{bandtTILE,mcclure1,mcclure2}.  

A given GIFS has infinitely many \textit{parameters}, the term parameter
defined in Section~\ref{sec:ps}, and for each parameter
${\overleftarrow{\theta}}$, many possible ${\overleftarrow{\theta}}%
$-\textit{sequences} $\mathcal{S}$ (see Definition~\ref{def:ts} of ${\overleftarrow{\theta}}
$-sequences in Section~\ref{sec:ts}). The main result in the first part of the
paper is that, for a given GIFS and for each parameter ${\overleftarrow{\theta}}$ and each
corresponding ${\overleftarrow{\theta}}$-sequence $\mathcal{S}$, there is a
tiling $T({\overleftarrow{\theta}}, \mathcal{S})$ of Euclidean space (see
Theorem~\ref{thm:tiling} in Section~\ref{sec:tilings}). Addresses can be
assigned in a natural way to the tiles in $T({\overleftarrow{\theta}},
\mathcal{S})$ as described in Section~\ref{sec:a}. In addition to the tiling
examples mentioned in the paragraph above, these GIFS tilings include the
Penrose and Rauzy tilings \cite{Rauzy}, and many others; see, for example,
Figures~\ref{fig:a}, \ref{fig:b} and \ref{fig:uniform}.

%\begin{comment1}
\begin{figure}[htb]
\includegraphics[width=6cm, keepaspectratio]{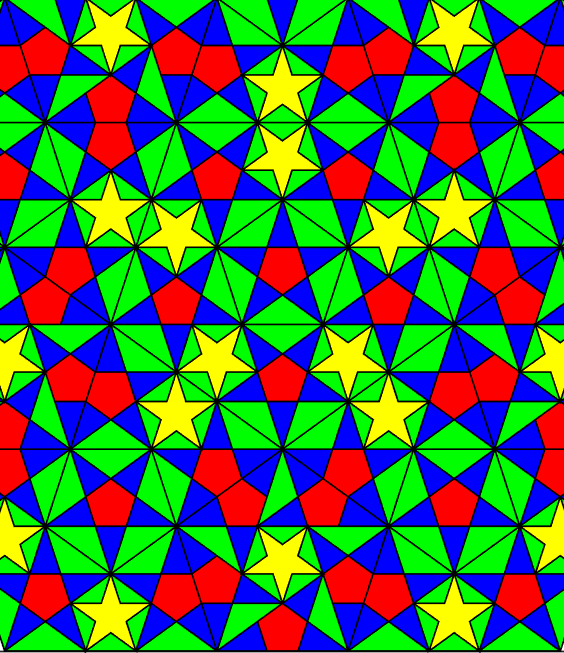}
\caption{A GIFS tiling by scaled copies of four types of tiles.}
\label{fig:a}
\end{figure}

Although the tilings constructed by our general method possess a replicating
pattern, particularly nice properties are obtained by carefully choosing the
${\overleftarrow{\theta}}$-sequences. We call such special
${\overleftarrow{\theta}}$-sequences and the corresponding tilings
\textit{balanced} (see Definition~\ref{def:balanced} in Section~\ref{sec:ts} and Definition~\ref{def:balanced2} in Section~\ref{sec:examples}). 
The second goal of this paper is to use the balanced property to construct
tilings $T$ that satisfy the following properties:

\begin{itemize}
\item $T$ has finitely many tiles up to congruence;
\item $T$ is self-similar;
\item $T$ is repetitive, but not periodic;
\item for a given GIFS, there are uncountably many tilings up to isometry
(each associated with a particular parameter).
\end{itemize}

Note that the tiles do not have to be pairwise similar. The relevant
definitions are:

\begin{definition}
A tiling $T$ is \textbf{repetitive}, also called \textbf{quasiperiodic}, if,
for every finite patch $P$ of $T$, there is a real number $R$ such that every
ball of radius $R$ contains a patch isometric (congruent) to $P$.
\end{definition}

\begin{definition}
A tiling $T$ is \textbf{self-similar} if there exists a similarity
transformation $\phi: {\mathbb{R}}^{d} \rightarrow{\mathbb{R}}^{d}$ with
scaling ratio greater than $1$ such that, for every tile $t\in T$, its image
$\phi(t)$ is, in turn, tiled by tiles in $T$.
\end{definition}

\begin{figure}[htb]
\includegraphics[width=7cm, keepaspectratio]{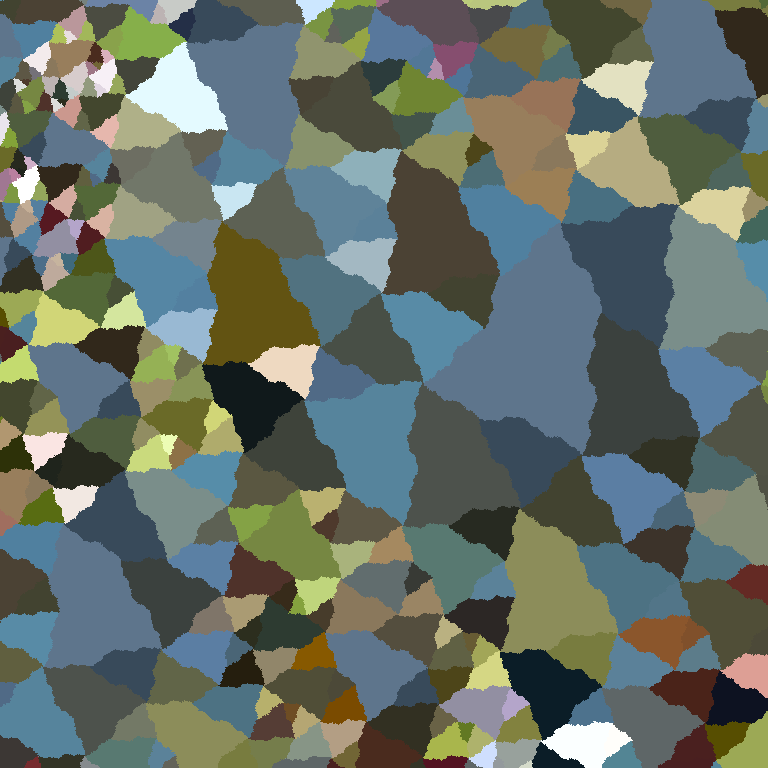}
\caption{A balanced fractal tiling based on a ``triangle" tile due to Akiyama \cite{aki}.}
\label{fig:b}
\end{figure}

Figures~\ref{fig:chair} (left panel), \ref{fig:EX3}, \ref{fig:a},
\ref{fig:b}, and \ref{fig:fp} are examples of such balanced tilings. Note that the tiles used in
Figures~\ref{fig:a}, \ref{fig:b} and  \ref{fig:fp} are not pairwise similar. Work on
self-similar tilings has been extensive over the past few decades, dating back
at least to the Penrose tilings \cite{Pen}, popularized by M. Gardner
\cite{g}. Much recent research was motivated by W. Thurston's notes \cite{T}.
Some formative papers include \cite{b,gh,Ken,rw,s,V}.

Among the main results of the second part of the paper are that the
constructed balanced tilings indeed satisfy the above bulleted properties.
Many of the proofs, which appear in Section~\ref{sec:properties}, rely heavily
on the hierarchical properties of the tilings, as described in
Section~\ref{sec:hier}. The \textit{tiling map} ${\overleftarrow{\theta}}
\mapsto T({\overleftarrow{\theta}})$ is a continuous map from the parameter
space, as defined in Section~\ref{sec:ps}, to the tiling space, as defined in
Section~\ref{sec:PT}. Theorem~\ref{thm:comm} states that the shift map on the
parameter space corresponds via this tiling map to going up one level in the
tiling hierarchy in the tiling space. There is an easily defined equivalence
relation on the parameter space (see Definition~\ref{def:e} in Section~\ref{sec:cong}) such that the
tiling map induces a bijection between the set of parameters up to equivalence
and the set of balanced tilings up to congruence (isometry); see Theorem~\ref{thm:main}  in Section~\ref{sec:cong}.
 This provides a
necessary and sufficient condition for two balanced tilings to be congruent.

\section{Graph Directed Iterated Function Systems}

\label{sec:GIFS}

\subsection{GIFS}

\label{sec:def1} Let $\mathbb{N}$ denote the set of positive integers, and for
$n \in\mathbb{N}$, let $[n] := \{1,2, \dots, n\}$. Let $G =\left(  V,
E\right)  $ be a finite strongly connected directed graph with vertex set $V
=[n]$ and edge set $E$. \textit{Strongly connected} means that, for any two
vertices $i$ and $j$, there is a directed path from $i$ to $j$. The digraph
$G$ may have loops and/or multiple edges. For an edge $e = (i,j)$, directed
from vertex $i$ to vertex $j$, the vertex $j$ is called the \textit{head},
denoted $e^{+}$, and the vertex $i$ is called the \textit{tail}, denoted
$e^{-}$. Let $E_{i}$ denote the set of all edges whose tail is $i$.

In this paper, a path always means a directed path, and a path can have
repeated vertices and edges. An infinite path has a starting vertex but no
terminal vertex. A path $\sigma=\sigma_{1}\, \sigma_{2}\, \cdots$ will
be written as its ordered string of edges $\sigma_{i} \in E, \; i = 1,2 \dots$. The starting vertex of a path $\sigma$ will be denoted $\sigma^{-}$, and the
terminal vertex of a finite path by $\sigma^{+}$. The length of a finite path
$\sigma$, i.e., the number of edges, will be denoted $|\sigma|$. We allow paths of length zero, consisting of a single vertex.

To each directed edge $e \in E$ of the digraph $G$, associate a function
$f_{e}: {\mathbb{R}}^{d}\rightarrow{\mathbb{R}}^{d}$. Let
\[
F = \{f_{e} \, : \, e\in E\}.
\]
Each $f\in F$ is assumed throughout this paper to be an invertible
contraction. Thus there is a contraction factor $\lambda_{f} <1$ such that $|
f(x)-f(y)|\leq\lambda_{f} | x-y |$ for all $f\in F$ and all $x,y\in
\mathbb{R}^{d}$. The pair $(G,F)$ is called a \textbf{graph directed IFS} (GIFS).

Let $\mathbb{H}$ denote the set of nonempty compact subsets of $\mathbb{R}%
^{d}$, and define $\mathbf{F}: \mathbb{H}^{n}\rightarrow\mathbb{H}^{n}$ as
follows. If $\mathbf{X} = (X_{1},X_{2}, \dots, X_{n}) \in{\mathbb{H}}^{n}$,
then
\[
\mathbf{F}(\mathbf{X}) = \big (F_{1}(\mathbf{X}), F_{2}(\mathbf{X}), \dots,
F_{n}(\mathbf{X}) \big ),
\]
where
\[
F_{i}(\mathbf{X}) = \bigcup_{e\in E_{i}} f_{e}(X_{e^{+}}).
\]
Note that, if $G = (V,E)$ and $V$ consists of a single vertex and $E$ is a set
of loops, then a graph directed IFS is an ordinary IFS, and $\mathbf{F}$ is
the associated Hutchinson operator. Thus a GIFS is a generalization of an IFS.

\subsection{Notation}

\label{sec:notation}

Let $\Sigma^{*}$ denote the set of paths of finite length in the digraph $G$
and $\Sigma^{\infty}$ the set of paths of infinite length. For $\sigma=
\sigma_{1} \sigma_{2} \cdots\in\Sigma^{\infty}$ let
\[
\sigma|k = \sigma_{1} \sigma_{2} \cdots\sigma_{k} \in\Sigma^{*},
\]
and $\sigma|0$ the path that is just the vertex $\sigma^{-}$.

Denote by $\overleftarrow{G}$ the digraph obtained from $G$ by reversing the
direction on all edges. Define path spaces $\overleftarrow{\Sigma}^{*}$ and
$\overleftarrow{\Sigma}^{\infty}$ accordingly. For any edge $e$ in $G$, let
$\overleftarrow{e}$ be the oppositely directed edge, and define
\begin{equation}
\label{eq:n1}f_{\overleftarrow{e}} := (f_{e})^{-1}.
\end{equation}
For $\sigma= \sigma_{1} \sigma_{2} \sigma_{3} \cdots\sigma_{k} \in\Sigma^{*}
\cup\overleftarrow{\Sigma}^{*}$, define
\begin{equation}
\label{eq:n2}f_{\sigma} := f_{\sigma_{1}}\circ f_{\sigma_{2}}\circ
f_{\sigma_{3}}\cdots f_{\sigma_{k}}.
\end{equation}
Note that, for $\sigma\in\Sigma^{*}$, the map $f_{\sigma}$ is a contraction,
but for $\sigma\in\overleftarrow{\Sigma}^{*}$, the map $f_{\sigma}$ is an expansion.

\subsection{The Attractor}

\label{sec:attractor}

The following is a standard result in the theory of graph iterated function
systems. In the theorem $\mathbf{F}^{k}$ denotes the $k$-fold iteration of
$\mathbf{F}$.

\begin{theorem}
\label{thm:one} If $\left(  G, F\right)  $ is a GIFS such that each function
in $F$ is a contraction, then there exists a unique $\mathbf{A}=(A_{1}%
,A_{2},...,A_{n} )\in\mathbb{H}^{n}$ such that
\[
\mathbf{A=F(A)}
\]
and
\[
\mathbf{A=}\lim_{k\rightarrow\infty} {\mathbf{F}}^{k}(\mathbf{B})
\]
independent of $\mathbf{B}\in\mathbb{H}^{n},$ where convergence is with
respect to the Hausdorff metric on $\mathbb{H}^{n}$.
\end{theorem}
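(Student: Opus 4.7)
The plan is to reduce Theorem~\ref{thm:one} to the Banach fixed point theorem by equipping $\h^{n}$ with a suitable complete metric and showing that $\mathbf{F}$ is a contraction with respect to this metric. First I would recall that the Hausdorff metric $h$ makes $\h = \h(\R^d)$ a complete metric space (this is the standard Blaschke/Hausdorff result). On $\h^n$ I would use the product metric
\[
d_\infty(\mathbf{X}, \mathbf{Y}) \;=\; \max_{1 \le i \le n} h(X_i, Y_i),
\]
which inherits completeness from $h$ componentwise (a Cauchy sequence in $d_\infty$ is coordinatewise Cauchy).

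Next I would check that $\mathbf{F}$ is a contraction on $(\h^n, d_\infty)$. The two ingredients are: (i) each $f_e$ being a contraction with factor $\lambda_e < 1$ implies $h(f_e(X), f_e(Y)) \le \lambda_e \, h(X, Y)$ for all $X, Y \in \h$, and (ii) the standard estimate
\[
h\Bigl(\textstyle\bigcup_{e \in E_i} A_e,\ \bigcup_{e \in E_i} B_e\Bigr) \;\le\; \max_{e \in E_i} h(A_e, B_e).
\]
Combining these with the definition $F_i(\mathbf{X}) = \bigcup_{e \in E_i} f_e(X_{e^+})$ yields
\[
h\bigl(F_i(\mathbf{X}), F_i(\mathbf{Y})\bigr) \;\le\; \max_{e \in E_i} \lambda_e \, h(X_{e^+}, Y_{e^+}) \;\le\; \lambda \cdot d_\infty(\mathbf{X}, \mathbf{Y}),
\]
where $\lambda := \max_{e \in E} \lambda_e < 1$ (a maximum over the finite edge set). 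Taking the maximum over $i$ gives $d_\infty(\mathbf{F}(\mathbf{X}), \mathbf{F}(\mathbf{Y})) \le \lambda \, d_\infty(\mathbf{X}, \mathbf{Y})$, so $\mathbf{F}$ is a contraction.

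Finally I would invoke the Banach fixed point theorem: there is a unique $\mathbf{A} \in \h^n$ with $\mathbf{F}(\mathbf{A}) = \mathbf{A}$, and for every $\mathbf{B} \in \h^n$ the iterates $\mathbf{F}^k(\mathbf{B})$ converge to $\mathbf{A}$ in $d_\infty$, which is exactly convergence with respect to the Hausdorff metric in each coordinate.

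There is no serious obstacle; the whole argument is a verbatim extension of Hutchinson's proof for a single IFS, with the one bookkeeping point being the choice of the sup/max product metric on $\h^n$ so that the contraction factor of $\mathbf{F}$ comes out as $\max_e \lambda_e$. One could instead use a weighted max-metric if one wanted a sharper factor, but for mere existence and uniqueness the unweighted metric suffices.
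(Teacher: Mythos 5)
Your proof is correct and is exactly the standard argument the paper has in mind: the paper states Theorem~\ref{thm:one} without proof, citing it as a standard result for graph-directed IFS, and the canonical proof is precisely your Banach fixed point argument on $(\mathbb{H}^n, d_\infty)$ with contraction factor $\max_{e\in E}\lambda_e$. The only bookkeeping worth adding is that each $E_i$ is nonempty (guaranteed here by strong connectivity) and finite, so that $F_i(\mathbf{X})$ is indeed a nonempty compact set and $\mathbf{F}$ maps $\mathbb{H}^n$ into itself.
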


The above limit formula is equivalent to
\begin{equation}
\label{eq:3}A_{i} =\bigcup \big \{ \lim_{k \rightarrow\infty} \{ f_{\sigma} (B) \, : \,
|\sigma| = k, \, \sigma^{-} = i \big \},
\end{equation}
for all $i \in[N]$ and for any $B \in\mathbb{H}.$ With notation as in Theorem
\ref{thm:one}, $\mathbf{A}$ is called the \textbf{attractor} of the GIFS
$(G,F)$ and $\left\{  A_{i}: i \in[n]\right\}  $ is its set of
\textbf{components}, each of which is compact. It follows from the definition
that, for each $i \in[n]$, the components of the attractor have the property
that
\begin{equation}
\label{eq:decomp}A_{i} = \bigcup_{e\in E_{i}} f_{e}(A_{e^{+}}).
\end{equation}

If, for all $i\in[n]$, the intersection of distinct sets in the union in
Equation~\eqref{eq:decomp} has empty interior, then the attractor $\mathbf{A}$
of the GIFS is said to be \textit{non-overlapping}. \textbf{For all GIFSs in
this paper we assume that the attractor is non-overlapping and that each
component of the attractor has nonempty interior}.

\section{Pre-Trees and $\theta$-Sequences}

\label{sec:CPT}

\subsection{Pre-Tree} \label{sec:pt}

\begin{definition}\label{def:pt} A \textit{subpath} of a finite path $\sigma= \sigma_{1}
\sigma_{2} \cdots\sigma_{k}$ in a digraph $G$ is defined as a path of the form
$\sigma_{1} \sigma_{2} \dots, \sigma_{j}$, where $0\leq j \leq k$. (For the
case $j=0$, the subpath consists of a single vertex.) A subpath is called
\textit{proper} if $j<k$. A set $S$ of finite directed paths in a directed
graph is called a \textbf{pre-tree} if

\begin{enumerate}
\item every path in $S$ begins at the same vertex $r$, called the
\textit{root} of $S$;
\item no proper subpath of a path in $S$ lies in $S$;
\item for every proper subpath $\sigma$ of a path in $S$ and every edge $e \in
E$, if $e^{-} = \sigma^+$, then the path $\sigma e$ is a subpath
of a path in $S$.
\end{enumerate}
\end{definition}

The subgraph $\langle S \rangle$ of $G$ consisting of all vertices and edges
of $S$ in a pre-tree $S$ may not be a tree. This subgraph $\langle S
\rangle$ may contain cycles, even directed cycles. However, there is an
actually tree $H(S)$ related to $S$ as described in the following proposition.
An example illustrating Proposition~\ref{prop:h} and its proof appears in Figure~\ref{fig:h}.

Graph homomorphism - a map from one graph to another that sends adjacent pairs of vertices to adjacent pairs
of vertices - is a standard notion in graph theory.  For digraphs with possibly multiple edges and loops, we define a
  {\it  homomorphism} from a digraph $G$ to a digraph $G'$ as a pair of functions function $h : V(G) \rightarrow V(G')$ 
and $h : E(G) \rightarrow E(G')$ such that, if edge $e$ is directed from $i$ to $j$, then $h(e)$ 
is directed from $h(i)$ to $h(j)$. We also use the notation $h : G \rightarrow G'$ 
for a digraph homomorphism.  

\begin{proposition}
\label{prop:h} Given a pre-tree $S$, there is a unique pair consisting of a rooted directed tree
$H(S)$ and a homomorphism $h: H(S) \rightarrow\langle S \rangle$ such that 
\begin{enumerate}
\item $h$ takes the root of $H(S)$ to the root of $\langle S \rangle$;
\item $h$ bijectively maps the set of all paths in $H(S)$ going from the root to a leaf onto $S$;
\item among all digraphs satisfying properties (1,2), $H(S)$ has the least
number of edges.
\end{enumerate}
\end{proposition}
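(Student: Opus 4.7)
The plan is to construct $H(S)$ explicitly as the ``prefix tree'' of $S$ and then establish minimality and uniqueness by a counting argument. I take the vertex set $V(H(S))$ to be the collection of all subpaths of paths in $S$, with the length-$0$ subpath (the single vertex $r$) serving as the root $\rho$. Every non-root vertex has the form $\sigma e$ for a unique parent $\sigma$, and I declare its unique incoming edge in $H(S)$ to come from $\sigma$, directed away from the root; this makes $H(S)$ a rooted directed tree. Define $h$ on a vertex $\sigma$ by $h(\sigma):=\sigma^{+}$ (with $\rho^{+}:=r$), and on the edge from $\sigma$ to $\sigma e$ by sending it to the edge $e\in E(G)$.

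Next I would verify properties (1) and (2). The map $h$ is a digraph homomorphism because for an edge $\sigma \to \sigma e$ of $H(S)$, its image $e$ has tail $e^{-}=\sigma^{+}=h(\sigma)$ and head $e^{+}=(\sigma e)^{+}=h(\sigma e)$. Property (1) is immediate. The key observation for (2) is that the leaves of $H(S)$ are exactly the elements of $S$: by pre-tree condition (3), together with the fact that every vertex of the strongly connected $G$ has outgoing edges, each proper subpath has at least one child in $H(S)$; and by condition (2), no path in $S$ is a proper subpath of another, so paths in $S$ themselves have no children. With that in hand, the root-to-leaf path $\rho\to\sigma_{1}\to\sigma_{1}\sigma_{2}\to\cdots\to\sigma_{1}\cdots\sigma_{k}$ maps under $h$ to the edge word $\sigma_{1}\sigma_{2}\cdots\sigma_{k}\in S$, yielding the desired bijection.

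For minimality and uniqueness, let $(T,h')$ be any pair satisfying (1) and (2). For $v\in V(T)$ let $\pi(v)\in\Sigma^{*}$ denote the image under $h'$ of the unique root-to-$v$ path in $T$. Each root-to-leaf path in $T$ has image some $\sigma=\sigma_{1}\cdots\sigma_{k}\in S$ and passes through vertices with $\pi$-values $\rho,\sigma_{1},\sigma_{1}\sigma_{2},\ldots,\sigma$, so the image of $\pi$ contains every element of $V(H(S))$. Therefore $|V(T)|\geq|V(H(S))|$, and since both are rooted trees this gives $|E(T)|\geq|E(H(S))|$, establishing minimality. When $T$ is itself minimal, equality forces $\pi$ to be a bijection, and a direct check then shows that $\pi$ is a digraph isomorphism $T\to H(S)$ intertwining $h'$ and $h$, giving uniqueness up to canonical isomorphism.

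The hardest step conceptually is identifying the leaves of $H(S)$ precisely with $S$ --- this is where both clauses (2) and (3) of Definition~\ref{def:pt}, together with strong connectivity of $G$, are jointly needed. Once that identification is in place, the level-by-level counting argument for minimality is essentially forced, and the uniqueness follows by reading off the isomorphism from $\pi$, so the remaining work is routine bookkeeping.
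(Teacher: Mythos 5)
Your construction produces exactly the paper's $H(S)$, but by a different route: you build the prefix tree of $S$ directly (vertices are the subpaths of paths in $S$, edges given by one-edge extensions), whereas the paper first forms the tree $H'$ which is the disjoint union of the paths of $S$ glued at the root and then quotients by the relation ``same image of the root path under $h'$'' --- the equivalence classes of that quotient are precisely your prefixes, so the two objects coincide. What your route buys is that the verifications the paper dismisses as routine are actually carried out: the identification of the leaves of $H(S)$ with $S$ via pre-tree conditions (2) and (3) plus the existence of out-edges, and, more substantially, a clean counting argument ($\pi$ surjects onto the prefix set, hence $|V(T)|\ge |V(H(S))|$ and $|E(T)|\ge |E(H(S))|$) that yields both minimality and uniqueness up to isomorphism, neither of which the paper argues. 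One caveat you should make explicit: your minimality/uniqueness argument uses the unique root-to-$v$ path in $T$ and the relation $|E(T)|=|V(T)|-1$, so it compares $H(S)$ only against rooted \emph{trees} satisfying (1) and (2), not against arbitrary digraphs as item (3) literally says. This is not a defect you can repair, since the literal digraph version of (3) is false --- for the pre-tree of Figure~\ref{fig:h}, the subgraph $\langle S\rangle$ itself, with the identity homomorphism, satisfies (1) and (2) with $5$ edges while $H(S)$ has $7$ --- so the tree reading is the only one under which the proposition holds; you should simply state that restriction (the paper's proof is equally silent on this point).
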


\begin{figure}[htb]
\includegraphics[width=9cm, keepaspectratio]{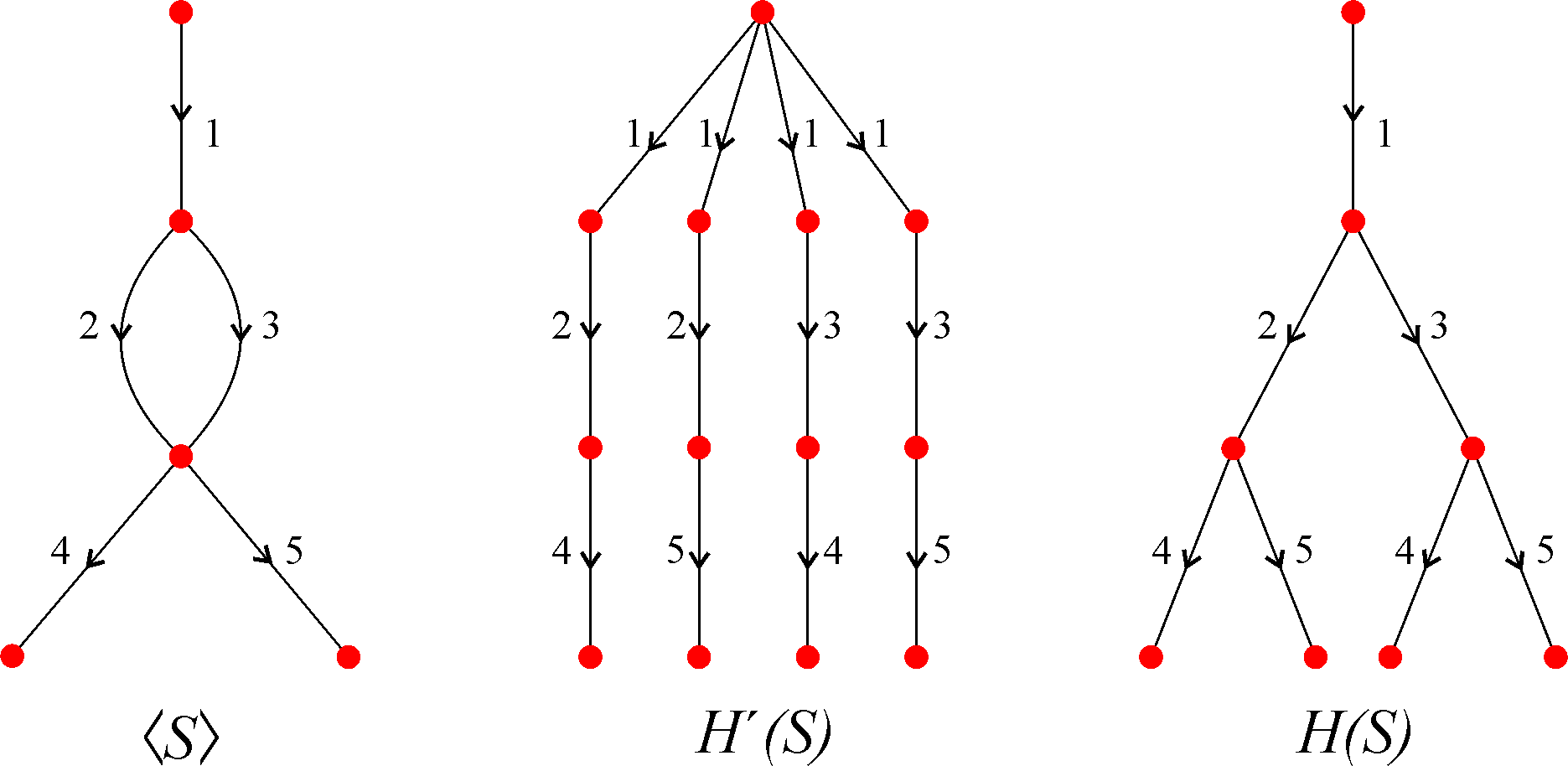}
%\vskip -.5cm
\caption{A pre-tree  $\langle S \rangle$ as the homomorphic image of a rooted tree $H(S)$.  The intermediate 
tree $H'(S)$ is used in the proof of Proposition~\ref{prop:h}.  
 The set of paths of  the pre-tree $S = \{124, 125, 134, 135\}$.  Edges with
the same label in $H(S)$ are mapped to the same edge of $\langle S \rangle$ by the homomorphism $h$. }
\label{fig:h}
\end{figure}

\begin{proof}  Let $H^{\prime}$ be the rooted tree constructed as follows.  The tree $H^{\prime}$ is the union of directed paths $p_{1}, p_{2}, \dots, p_{m}$
that have  one vertex in common, the root.  Pairwise these paths in $H'$  have no edges in common and no vertices in common except the root.  The edges are
directed from the root to the leaf.  Moreover,  these paths are taken to be exactly all the paths in $S$.  See the two left digraphs in Figure~\ref{fig:h}.   Clearly there is a homomorphism $h^{\prime}: H^{\prime }\rightarrow\langle S \rangle$ satisfying properties (1,2).

The rooted directed tree $H(S)$ is obtained from $H^{\prime}$ as a quotient graph.  Call two vertices $i,j \in V(H')$ equivalent, denoted $i\sim j$, if there are paths $p_{i}$ and $p_{j}$ in $H^{\prime}$, from the root to $i$ and $j$, respectively, such that
 $p_{i}$ and $p_{j}$ have the same image under $h^{\prime}$.  Call two edges $(i_1,j_1)$ and $(i_2,j_2)$ in $E(H')$ equivalent if $i_1 \sim i_2$ and $j_1 \sim j_2$.  Let $H(S)$ be the tree whose vertices and edges are the
equivalence classes of vertices of $H'$.  See Figure~\ref{fig:h}.  Define $h : H(S) \rightarrow \langle S \rangle$ 
by mapping a vertex $v$ to $h'(i)$, where $i\in v$; similarly for the image of edges.  It is routine to check properties (1-3) and uniqueness.
\end{proof}

\subsection{$\theta$-Sequences}

\label{sec:ts}

\begin{definition}
\label{def:ts} Let $\overleftarrow{\theta} := \overleftarrow{\theta}_{1} \;
\overleftarrow{\theta}_{2} \; \overleftarrow{\theta}_{3} \cdots\in
\overleftarrow{\Sigma}^{\infty}$ be an infinite path in $\overleftarrow{G}$,
where $\theta_{i} \in E$ for all $i$, and let $v_{0}, v_{1}, v_{2}, \dots$ be
the successive vertices of ${\overleftarrow{\theta}}$. An infinite sequence
${\mathcal{S}} = \{S_{0}, S_{1}, S_{2},\dots\}$ of pre-trees in $G$ is
called a $\mathbf{\overleftarrow{\theta}}$-\textbf{sequence} if, for all $k =
0,1,2, \dots,$

\begin{enumerate}
\item $v_{k}$ is the root of the pre-tree $S_{k}$, and

\item $\theta_{k} \, S_{k-1} := \{ \theta_{k} \sigma: \sigma\in S_{k-1}\}
\subset S_{k}$ for all $k\geq1$.
\end{enumerate}
The second condition states that the pre-trees along the path $\ot$ are, in a sense, nested.  
Note that edges $\theta_k$ and $\ot _k$ are the same edge oppositely oriented.  
\end{definition}

\subsection{Examples}

In the following examples, the directed graph $G$ is given, as well as a path
$\overleftarrow{\theta}:=\overleftarrow{\theta}_{1}\;\overleftarrow{\theta
}_{2}\;\overleftarrow{\theta}_{3}\cdots\in\overleftarrow{\Sigma}^{\infty}$
with successive vertices $v_{0},v_{1},v_{2},\dots$. Clearly, numerous examples
of ${\overleftarrow{\theta}}$-sequences can be obtained inductively by
choosing an arbitrary pre-tree rooted at $v_{0}$, then a pre-tree rooted
at $v_{1}$ subject to condition (2) in the definition of
${\overleftarrow{\theta}}$-sequence, and continuing at $v_{2},v_{3},\dots$.
Below, we will give a few specific examples that we return to in Section~\ref{sec:examples}.

\begin{example}
\label{ex:1} Let $S_{k}, \, k \geq0,$ be the pre-tree with root $v_{k}$
consisting of all paths of length exactly $k$. Then ${\mathcal{S}} (\theta) =
\{ S_{0}, S_{1}, S_{2}, \dots\}$ is a $\overleftarrow{\theta}$-sequence of pre-trees.
\end{example}

\begin{example}
\label{ex:2} For any $m \in\mathbb{N}$, let $S_{k}, \, k \geq0,$ be the
pre-tree with root $v_{k}$, where
\[
S_{k} = \{ \theta_{k} \theta_{k-1} \cdots\theta_{j+1} \sigma\, : \, 0\leq j
\leq k, \, \sigma^{-} = v_{j}, \;|\sigma| = m\}.
\]
Then ${\mathcal{S}}(\theta, m) := \{ S_{0}, S_{1}, S_{2}, \dots\}$ is a
$\overleftarrow{\theta}$-sequence of pre-trees.
\end{example}

In the next examples, assume that $(G,F)$ is a GIFS where each $f\in F$ is a
similarity transformation. Such a GIFS will be referred to as a
\textbf{similarity GIFS}. For any similarity transformation $f$, let
$\lambda_{f}$ denote the scaling ratio, i.e., $|f(x)-f(y)| = \lambda_{f} \,
|x-y|$ for all $x,y \in{\mathbb{R}}^{d}$.

\begin{example}
\label{ex:3} Given a similarity GIFS, for any pair of positive real numbers $q < Q$, define a
$\overleftarrow{\theta}$-sequence $\mathcal{S }(\theta,q,Q)= \{S_{0}, S_{1},
S_{2}, \dots\}$ by
\[
S_{k} = \{ \sigma\in\Sigma^{*}\, : \, q \leq\lambda( f_{\overleftarrow{\theta
}|k}) \cdot\lambda( f_{\sigma}) \leq Q\}.
\]
Note that, with notation as in equations \eqref{eq:n1} and \eqref{eq:n2} in
Section \ref{sec:notation}, the function $f_{\sigma}$ is a contraction, while
$f_{\overleftarrow{\theta}|k}$ is an expansion. It is routine to check that
condition (3) in the definition of pre-tree and condition (2) in the
definition of ${\overleftarrow{\theta}}$-sequence are satisfied. Therefore
${\mathcal{S}}(\theta, q,Q)$ is a $\overleftarrow{\theta}$-sequence.
\end{example}

\begin{example}
[Balanced Sequences]\label{ex:4} For a similarity GIFS, assume that there
exists a positive real number $s$ and positive integers $d(e)$ such that
\begin{equation}
\label{eq:s}\lambda(f_{e}) = s^{d (e)}%
\end{equation}
for all $e \in E$. Define $d(\overleftarrow{e}) := d(e)$ for all $e\in E$. For
a path $\sigma\in\Sigma^{*}$ or $\sigma\in\overleftarrow{\Sigma}^{*}$, define
\[
d (\sigma) = \sum_{e\in\sigma} d (e ).
\]
For $\sigma= \sigma_{1} \cdots\sigma_{j} \in\Sigma^{*}$, we introduce the
notation $\sigma_{*} := \sigma_{j}$ for the last edge in path $\sigma$. For
$\overleftarrow{\theta} \in{\overleftarrow{\Sigma}}^{\infty}$, define
$\mathcal{S }= \mathcal{S}(\overleftarrow{\theta}) = (S_{0}, S_{1}, S_{2},
\dots)$ by
\begin{equation}
\label{eq:4}S_{k} = \big \{ \sigma\in\Sigma^{*} \, : \, \sigma^{-} = v_{k}
\quad\text{and} \quad0 < d (\sigma) - d (\overleftarrow{\theta}|k) \leq d
(\sigma_{*}) \big \}.
\end{equation}
Intuitively, this guarantees that $d(\sigma)$ does not differ by much from $d
(\overleftarrow{\theta}|k)$.
\end{example}

\begin{definition}
\label{def:balanced} For a given GIFS and $\overleftarrow{\theta}
\in{\overleftarrow{\Sigma}}^{\infty}$, the sequence $\mathcal{S}%
(\overleftarrow{\theta}) = (S_{0}, S_{1}, S_{2}, \dots)$ given by
Equation~\eqref{eq:4} will be called the \textbf{balanced} sequence. The
constant $s$ in Equation~\eqref{eq:s} will be called the \textbf{scaling
constant}.
\end{definition}

\begin{proposition}
\label{prop:balanced} The balanced sequence $\mathcal{S}%
({\overleftarrow{\theta}})$ is a ${\overleftarrow{\theta}}$-sequence.
\end{proposition}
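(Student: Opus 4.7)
Writing $D_k := d(\overleftarrow{\theta}|k)$, the plan is to check the three defining properties of a pre-tree for each $S_k$ and then the nesting $\theta_k S_{k-1} \subset S_k$. The property that every path in $S_k$ starts at $v_k$ is built into the definition, so making $v_k$ the root only requires nonemptiness of $S_k$, which will come from the extension argument below. The central observation from which everything else flows is this: if $\tau \in S_k$ and $\sigma = \tau_1\cdots\tau_j$ is a proper subpath (so $j < |\tau|$), then
\[
d(\sigma) \;\leq\; d(\tau) - d(\tau_*) \;\leq\; D_k,
\]
the last inequality being exactly the upper bound on $d(\tau) - D_k$ in Equation~\eqref{eq:4}.

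From this, condition (2) of Definition~\ref{def:pt} is immediate: such a proper subpath has $d(\sigma) - D_k \leq 0$, contradicting the strict lower bound for membership in $S_k$. For condition (3), I will construct an extension. Given a proper subpath $\sigma$ of some $\tau \in S_k$ and an edge $e$ with $e^- = \sigma^+$, set $\mu_0 := \sigma e$ and recursively define $\mu_l := \mu_{l-1} f_l$ for an arbitrary admissible edge $f_l$; such an edge exists because $G$ is strongly connected. Since $d(e') \geq 1$ for every edge $e'$, the sequence $d(\mu_l)$ strictly increases and so first exceeds $D_k$ at some index $l^\star$. At that step $d(\mu_{l^\star - 1}) \leq D_k$, so
\[
0 \;<\; d(\mu_{l^\star}) - D_k \;=\; d(\mu_{l^\star - 1}) + d(f_{l^\star}) - D_k \;\leq\; d(f_{l^\star}) \;=\; d\bigl((\mu_{l^\star})_*\bigr),
\]
which places $\mu_{l^\star}$ in $S_k$ with $\sigma e$ as a prefix. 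Running this with $\sigma$ equal to the length-zero path at $v_k$ also shows $S_k$ is nonempty.

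It remains to show $\theta_k S_{k-1} \subset S_k$. I will use the identity $D_k = D_{k-1} + d(\theta_k)$, which holds because $d(\overleftarrow{e}) = d(e)$. For $\sigma \in S_{k-1}$, the path $\theta_k\sigma$ starts at $v_k$ and has last edge $\sigma_*$, and
\[
d(\theta_k\sigma) - D_k \;=\; \bigl(d(\theta_k) + d(\sigma)\bigr) - \bigl(D_{k-1} + d(\theta_k)\bigr) \;=\; d(\sigma) - D_{k-1},
\]
which lies in $(0,\, d(\sigma_*)]$ since $\sigma \in S_{k-1}$, so $\theta_k\sigma \in S_k$. The main obstacle is condition (3) of the pre-tree, since it is the only step requiring a construction rather than a direct verification; the crux is that stopping at the first $l$ with $d(\mu_l) > D_k$ automatically also keeps $d(\mu_l) - D_k$ within $d$ of the newly appended last edge.
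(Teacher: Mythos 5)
Your proof is correct and follows essentially the same route as the paper's: pre-tree condition (2) and the nesting $\theta_k S_{k-1}\subset S_k$ are verified by exactly the same $d$-value arithmetic, resting on the same observation that a proper subpath $\sigma$ of $\tau\in S_k$ satisfies $d(\sigma)\le d(\tau)-d(\tau_*)\le d(\overleftarrow{\theta}|k)$. For condition (3) you stop an edge-by-edge extension at the first index where the $d$-value exceeds $d(\overleftarrow{\theta}|k)$, while the paper takes a longest extension satisfying the upper bound and derives the lower bound from maximality; these are dual forms of the same greedy argument (only note that when $d(\sigma e)$ already exceeds $d(\overleftarrow{\theta}|k)$, so that $l^\star=0$, the step ``$d(\mu_{l^\star-1})\le d(\overleftarrow{\theta}|k)$'' should be read as your stated central observation $d(\sigma)\le d(\overleftarrow{\theta}|k)$).
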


\begin{proof}
We first show that $S_{k}$ is a pre-tree for all $k$. Concerning condition
(2) in the definition of pre-tree, if $\sigma^{\prime}\in S_{k}$ is a
proper subpath of $\sigma\in S_{k}$, then
\[
d(\sigma^{\prime}) - d({\overleftarrow{\theta}}|k) \leq d(\sigma)-d(\sigma
_{*}) - d({\overleftarrow{\theta}}|k) < 0,
\]
which is a contradiction.

Concerning condition (3), if $\sigma^{\prime}$ is a proper subpath of
$\sigma\in S_{k}$ and $e$ is an edge of $G$ such that $e^{-}$ is a vertex on
$\sigma^{\prime}$, then
\[
d(\sigma^{\prime}e) - d({\overleftarrow{\theta}}|k) \leq d(\sigma
)-d(\sigma_{*}) + d(e) - d({\overleftarrow{\theta}}|k) \leq d(e).
\]
Let $\omega$ be a longest path in $G$ starting at $e^{+}$ and satisfying
$d(\sigma^{\prime}e \omega) - d({\overleftarrow{\theta}}|k) \leq d(\omega
_{*})$. It now suffices to show that $d(\sigma^{\prime}e \omega) -
d({\overleftarrow{\theta}}|k) >0$. If $d(\sigma^{\prime}e \omega) -
d({\overleftarrow{\theta}}|k) \leq0$ and $e^{\prime}$ is any edge such that
$e^{-}=\omega^{+}$, then $d(\sigma^{\prime}e \omega e^{\prime}) -
d({\overleftarrow{\theta}}|k) \leq d(e^{\prime}) = (d(\sigma^{\prime}e
\omega))_{*}$, contradicting the maximality of $\omega$.

To verify that $\mathcal{S}({\overleftarrow{\theta}})$ is a
${\overleftarrow{\theta}}$-sequence, assume that $\sigma\in S_{k-1}$. Then $0
< d(\sigma) -d({\overleftarrow{\theta}}|k-1) \leq d(\sigma_{*})$. But
\[
d(\theta_{k}\sigma) - d({\overleftarrow{\theta}}|k) = d(\sigma)
-d({\overleftarrow{\theta}}|k-1) \qquad\text{and} \qquad(\theta_{k}\sigma)_{*}
= \sigma^{*}.
\]
Therefore $\theta_{k}\sigma\in S_{k}.$
\end{proof}

\begin{example}
\label{ex:Am} As a simple special case of Example~\ref{ex:4} , consider the
GIFS depicted in the lower left of Figure~\ref{fig:aman}. It consists of a
single vertex and two loop edges $e_{1}$ and $e_{2}$. Assume that the
corresponding functions have scaling ratios
\[
\lambda(f_{1}) = s\qquad\lambda(f_{2}) = s^{2}
\]
for some $0<s<1$, so that $d(e_{1}) = 1, \, d(e_{2}) = 2$. Let the parameter
be ${\overleftarrow{\theta}} = \overleftarrow{e_{1}} \, \overleftarrow{e_{2}
}\, \cdots$. Then ${\overleftarrow{\theta}}|2 = \overleftarrow{e_{1}}\,
\overleftarrow{ e_{2}}$ and $d(\overleftarrow{\theta}|2) = 3$. The condition
on $\sigma$ in the definition of $S_{2}$ in Equation~\ref{eq:4} is $3 <
d(\sigma) \leq3+d(\sigma_{*})$. The tree $H(S_{2})$ corresponding to the
pre-tree $S_{2}$ at the third vertex $e_{2}^{-}$ of $\theta$, as defined by
Proposition~\ref{prop:h}, is shown at the right in Figure~\ref{fig:aman}. The
labels on each edge $e$ is the values $d(e)$; the label on each leaf $u$ is
the value $d(\sigma)$ of the path $\sigma\in S_{2}$ corresponding to the path
in $H(S_{2})$ from the root to $u$. We will return to this example in Example~\ref{ex:balancedT} in
Section~\ref{sec:examples}, where the red and black colors on the leaves will
be explained.

%\begin{comment1}
\begin{figure}[htb]
\includegraphics[width=3cm, keepaspectratio]{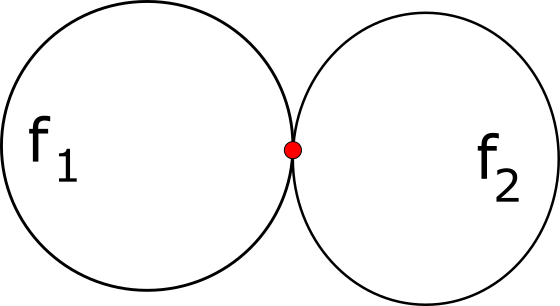} \includegraphics[width=7cm, keepaspectratio]{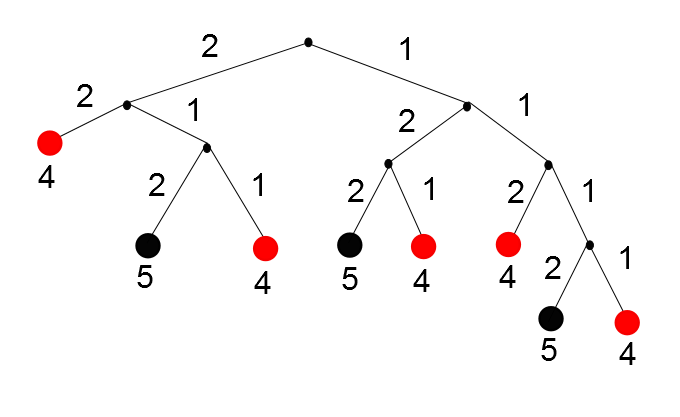}
\caption{The GIFS of Example~\ref{ex:Am} on the left and the tree $H(S_2)$ on the right.}
\label{fig:aman}
\end{figure}
%\end{comment1}

\end{example}

\section{GIFS Tilings}

For this paper, a \textit{tile} is a compact subset of ${\mathbb{R}}^{d}$ with
nonempty interior, and a \textit{tiling of a set} $X \subseteq{\mathbb{R}}%
^{d}$ is a set of pairwise non-overlapping tiles whose union is $X$.

\subsection{The Parameter Space}

\label{sec:ps} Let $(G,F)$ be a GIFS. Any path $\overleftarrow{\theta}%
\in\overleftarrow{\Sigma}^{\infty}$ will be referred to as a
\textbf{parameter} of $G$. Define a metric $d$ on the set
$\overleftarrow{\Sigma}^{\infty}$ of parameters by
\[
d(\sigma,\omega)=%
\begin{cases}
0\quad\text{if}\;\sigma=\omega\\
2^{-k}\quad\text{otherwise, where}\;k\;\text{is the first integer such
that}\;\sigma_{k}\neq\omega_{k}%
\end{cases}
.
\]
This makes $(\Sigma,d)$ a compact metric space, which we call the
\textbf{parameter space} of the GIFS.

\subsection{$\theta$-tilings}

\label{sec:tilings} There are infinitely many parameters for a given GIFS,
and, for every parameter of the GIFS and every corresponding
$\overleftarrow{\theta}$-sequence ${\mathcal{S}} = \{ S_{0}, S_{1}, S_{2},
\dots\}$, a tiling $T(\overleftarrow{\theta}, \mathcal{S})$ will be
constructed as follows. For $\sigma\in S_{k}$:%

\begin{equation}
\label{eq:tiling}\begin{aligned} t(\overleftarrow{\theta}, \mathcal S, k, \sigma) &= f_{\overleftarrow{\theta}|k}\circ f_{\sigma}(A_{\sigma^+}) \\ T(\overleftarrow{\theta}, \mathcal S, k) &= \{ t(\overleftarrow{\theta}, \mathcal S, k, \sigma) \, : \, \sigma \in S_k \} \\ T(\overleftarrow{\theta}, \mathcal S) &= \bigcup_{k=0}^{\infty} T(\overleftarrow{\theta}, \mathcal S, k). \end{aligned}
\end{equation}
Note that, in the first line of Equation~\eqref{eq:tiling}, the map
$f_{\sigma}$ is a contraction, being the composition of contractions, and
$f_{\overleftarrow{\theta}|k}$ is an expansion, being the composition of
inverses of contractions. For each $\sigma\in S_{k}$, the set
$t(\overleftarrow{\theta}, \mathcal{S}, k, \sigma)$ is a single tile. For each
$k$, the set $T(\overleftarrow{\theta}, \mathcal{S}, k)$ is a patch of tiles.
These patches are nested, i.e., $T(\overleftarrow{\theta}, \mathcal{S}, k)
\subset T(\overleftarrow{\theta}, \mathcal{S}, k+1)$ for all $k$ because, for
$\sigma\in S_{k}$, we have
\[
f_{\overleftarrow{\theta}|k}\circ f_{\sigma}(A_{\sigma^{+}}) =
f_{\overleftarrow{\theta}|k+1} \circ f_{\theta_{k+1}} f_{\sigma}(A_{\sigma
^{+}}) = f_{\overleftarrow{\theta}|k+1} \circ f_{\omega}(A_{\omega^{+}}),
\]
where $\omega= f_{\theta_{k+1}} f_{\sigma} \in S_{k+1}$ by condition (2) in
the definition of $\overleftarrow{\theta}$-sequence. The tiling
$T(\overleftarrow{\theta}, \mathcal{S})$ is the nested union of the patches
$T(\overleftarrow{\theta}, \mathcal{S}, k)$.

The tiles in a patch $T(\overleftarrow{\theta}, \mathcal{S}, k)$ are in
bijection with the leaves of the tree $H(S_{k})$ as defined in
Proposition~\ref{prop:h}. Specifically, $T(\overleftarrow{\theta},
\mathcal{S}, k) = \{ f_{\overleftarrow{\theta}|k}\circ f_{\sigma}%
(A_{\sigma^{+}}) \, : \, \sigma\in P \}$, where $P$ is the set of paths in
$H(S_{k})$ from the root to a leaf.

\begin{definition}
Given a tiling parameter $\overleftarrow{\theta} \in\overleftarrow{\Sigma
}^{\infty}$ of a GIFS and a $\overleftarrow{\theta}$-sequence $\mathcal{S}$,
the tiling $T(\overleftarrow{\theta}, \mathcal{S})$ will be called the
$(\overleftarrow{\theta},\mathcal{S})$-\textbf{tiling} or, if $\mathcal{S}$ is
understood, then simply the $\overleftarrow{\theta}$-\textbf{tiling}.
\end{definition}

For the set $T(\overleftarrow{\theta},\mathcal{S})$ of tiles, denote by
$\langle T(\overleftarrow{\theta},\mathcal{S})\rangle$ the union of these
tiles. Call $\overleftarrow{\theta}$ \textbf{filling} if $\langle
T(\overleftarrow{\theta},\mathcal{S})\rangle={\mathbb{R}}^{d}$ for all
$\overleftarrow{\theta}$-sequences $\mathcal{S}$. Theorem~\ref{thm:filling}
states that almost all parameters are filling. A proof is omitted since it is
similar to that of \cite[Theorems 3.7 and 3.9]{BV1}. Theorem~\ref{thm:tiling}
states that if ${\overleftarrow{\theta}}$ is filling, then
$T(\overleftarrow{\theta},\mathcal{S})$ is indeed a tiling of ${\mathbb{R}%
}^{d}$.

\begin{theorem}
\label{thm:filling} For a GIFS $(G,F)$, almost all $\overleftarrow{\theta}
\in\overleftarrow{\Sigma}^{\infty}$ are filling in the following sense of
``almost all":

\begin{enumerate}
\item The set of filling $\overleftarrow{\theta} \in\overleftarrow{\Sigma
}^{\infty}$ is dense in the parameter space $\overleftarrow{\Sigma}^{\infty}$.

\item Suppose that a positive probability is assigned to each edge such that the sum of the
probabilities of the out-edges at each vertex is one, and
suppose that a parameter in $\overleftarrow{\Sigma}^{\infty}$ is chosen at
random according to these probabilities. Then, with probability 1, this random
parameter is filling.
\end{enumerate}
\end{theorem}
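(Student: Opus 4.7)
My first step is to reduce the filling condition to one that does not mention $\mathcal{S}$. Iterating the self-similarity identity $A_i = \bigcup_{e \in E_i} f_e(A_{e^+})$ along each path of $S_k$ and using conditions (2,3) in the definition of a pre-tree yields $\bigcup_{\sigma \in S_k} f_\sigma(A_{\sigma^+}) = A_{v_k}$, so
\[
\langle T(\overleftarrow{\theta},\mathcal{S},k)\rangle = f_{\overleftarrow{\theta}|k}(A_{v_k}),
\]
independent of $\mathcal{S}$. Since the patches are nested, $\langle T(\overleftarrow{\theta},\mathcal{S})\rangle = \bigcup_{k\ge 0} f_{\overleftarrow{\theta}|k}(A_{v_k})$. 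Setting $x_k := f_{\theta_k} \circ \cdots \circ f_{\theta_1}(x) = f_{\theta_k}(x_{k-1})$, the parameter $\overleftarrow{\theta}$ is filling iff for every $x \in \mathbb{R}^d$ some $x_k$ lies in $A_{v_k}$. Because each $f_e$ is a contraction, a quick induction gives $\mathrm{dist}(x_k, A_{v_k}) \le C\lambda^k$ with $\lambda := \max_e \lambda_{f_e} < 1$; the orbit always approaches the attractor, and filling reduces to showing that it actually enters.

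\textbf{Density (Part 1).} Given $\overleftarrow{\omega}$ and $\epsilon > 0$, fix $N$ with $2^{-N} < \epsilon$ and let $\overleftarrow{\theta}$ agree with $\overleftarrow{\omega}$ on the first $N$ edges. By strong connectedness, pick a cycle $c$ in $G$ based at $v_N$ and extend $\overleftarrow{\theta}$ by repeating $\overleftarrow{c}$; then $f_{\overleftarrow{\theta}|N+jm} = f_{\overleftarrow{\omega}|N} \circ \psi^j$ where $\psi$ is the expansion associated with one period. The fixed point $p$ of $\psi$ is the attractor point with periodic address $ccc\cdots$, which lies in $A_{v_N}$. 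Since strong connectedness makes periodic addresses dense in the address space, and $\mathrm{int}(A_{v_N}) \ne \varnothing$, we may choose $c$ so that $p \in \mathrm{int}(A_{v_N})$. Then $\psi^j(A_{v_N})$ contains balls around $p$ of radius tending to infinity, so the nested union fills $\mathbb{R}^d$.

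\textbf{Probability (Part 2).} The random parameter is a Markov chain on edges, irreducible by strong connectedness and positivity of probabilities; by Borel--Cantelli, every finite edge pattern starting at any given vertex recurs infinitely often almost surely. Fix a countable dense $D \subset \mathbb{R}^d$. For each $x \in D$ I show that almost surely some $x_k$ enters $\mathrm{int}(A_{v_k})$; a union bound over $D$, combined with the monotonicity observation that entry at $x$ implies entry of a small neighborhood of $x$ for slightly larger $k$, gives filling a.s. The essential ingredient is a \emph{capture block} at each vertex $i$: a finite edge pattern $p_i$ of positive probability whose composition drives the closed $\delta$-neighborhood of $A_i$ into $\mathrm{int}(A_i)$ for some $\delta > 0$ chosen uniformly in $i$. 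Once such blocks exist, the bound $\mathrm{dist}(x_k,A_{v_k}) \le C\lambda^k$ guarantees that after some $K$ the orbit stays in the $\delta$-tube, and then a standard Borel--Cantelli argument on the positive-probability occurrences of $p_i$ at successive visits to $i$ forces a successful capture.

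\textbf{Main obstacle.} The technically delicate step is the existence and uniform strength of the capture block $p_i$. The idea is the same one that chose the cycle $c$ in Part 1: by strong connectedness and $\mathrm{int}(A_i) \ne \varnothing$ one constructs a cycle based at $i$ whose associated contraction has fixed point in $\mathrm{int}(A_i)$, and a sufficiently long iterate of this cycle pulls every point within $\delta$ of $A_i$ strictly inside. Making the choice of $\delta$ uniform in $i$, and verifying the conditional-independence required for Borel--Cantelli as successive visits to $i$ proceed, is the real labor of the proof; everything else follows the template of \cite[Theorems 3.7 and 3.9]{BV1}.
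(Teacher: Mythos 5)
The paper does not actually give a proof of Theorem~\ref{thm:filling}: the proof is omitted with a pointer to \cite{BV1}, and your outline reconstructs essentially that intended argument. Your reduction is right: Lemma~\ref{lem:overlap} gives $\langle T(\overleftarrow{\theta},\mathcal{S},k)\rangle=f_{\overleftarrow{\theta}|k}(A_{v_k})$ independently of $\mathcal{S}$, so filling means that for every $x$ the orbit $x_k=f_{\theta_k}\circ\cdots\circ f_{\theta_1}(x)$ eventually lands in $A_{v_k}$, and $\mathrm{dist}(x_k,A_{v_k})\le\lambda^k\,\mathrm{dist}(x,A_{v_0})$. The density argument (append a periodic tail $\overleftarrow{c}$ whose cycle $c$ has its fixed point in $\mathrm{int}(A_{v_N})$; such cycles exist because fixed points of cycles at $v_N$ are dense in $A_{v_N}$ and $\mathrm{int}(A_{v_N})\neq\varnothing$) is correct, and your capture block does exist: a long iterate of a cycle at $i$ with fixed point $p\in\mathrm{int}(A_i)$ maps the closed $\delta$-neighborhood of $A_i$ into a small ball about $p$, hence into $\mathrm{int}(A_i)$ with a definite margin, for any fixed $\delta$.

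One step of Part 2 is too loose as written: showing that a neighborhood of each point of a countable dense set $D$ is eventually covered only shows that $\bigcup_k f_{\overleftarrow{\theta}|k}(A_{v_k})$ contains a dense open set, which is strictly weaker than filling, so "union bound over $D$ plus monotonicity" does not by itself give the conclusion. The repair is already in your toolkit and in fact makes $D$ unnecessary: the a.s.\ event "the (reversed) capture block occurs in $\overleftarrow{\theta}$ infinitely often" does not depend on $x$, while the bound $\mathrm{dist}(x_k,A_{v_k})\le\lambda^k\,\mathrm{dist}(x,A_{v_0})$ is deterministic and valid for every $x\in\mathbb{R}^d$; hence on that single event every $x$ is captured at the first block occurrence after its orbit enters the $\delta$-tube. (Alternatively, keep a countable set but take it to be an $\eta$-net, where $\eta$ is the uniform interior margin provided by the block, and use $|x_k-x'_k|\le|x-x'|$.) Two bookkeeping points worth stating explicitly: the block must appear in $\overleftarrow{\theta}$ in reversed orientation so that $f_{\theta_{k+|w|}}\circ\cdots\circ f_{\theta_{k+1}}=f_w$ acts on $x_k$, and Part 1 applies $f_{\overleftarrow{\omega}|N}$ to arbitrarily large balls, which uses that each $f_e$ is a bijection of $\mathbb{R}^d$ --- an assumption the paper itself makes implicitly in writing $f_{\overleftarrow{e}}=(f_e)^{-1}$.
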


\begin{lemma}
\label{lem:overlap} For any GIFS and any pre-tree $S$, the set
\[
W(S) := \{ f_{\sigma}(A_{\sigma^{+}}) : \sigma\in S\}
\]
is a tiling of the attractor component $A_{r}$, where $r$ is the root of the
pre-tree $S$.
\end{lemma}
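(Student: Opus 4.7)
The plan is to induct on $N := \max\{|\sigma| : \sigma \in S\}$, the maximum path length in the pre-tree, using the self-similarity relation $A_r = \bigcup_{e \in E_r} f_e(A_{e^+})$ of Equation \eqref{eq:decomp} together with the standing non-overlap assumption on the GIFS attractor.

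In the base case $N = 0$, condition (2) of Definition \ref{def:pt} forces $S$ to be the singleton consisting of the zero-length path at $r$, so $W(S) = \{A_r\}$ tiles $A_r$ trivially. For $N \geq 1$, the set $S$ contains no zero-length path (such a path would be $r$ itself, and $r$ is a proper subpath of any longer element of $S$, contradicting condition (2)). Hence each $\sigma \in S$ factors uniquely as $\sigma = e \sigma'$ with $e \in E_r$ and $\sigma'$ a path starting at $e^+$, and I set
\[
S_e := \{\sigma' : e\sigma' \in S\}, \qquad e \in E_r.
\]
The structural step is to verify that each $S_e$ is a non-empty pre-tree rooted at $e^+$ with maximum path length at most $N-1$. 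Non-emptiness follows by applying condition (3) to the zero-length subpath $r$ of any length-$\geq 1$ element of $S$, which forces the single-edge path $e$ to be a prefix of some element of $S$ for every $e \in E_r$. The three pre-tree conditions for $S_e$ then transfer from those for $S$ via the bijection $\sigma' \leftrightarrow e\sigma'$: (1) is immediate, (2) fails for $S_e$ only if it fails for $S$, and (3) for $S_e$ is obtained by applying (3) for $S$ to the extended subpath $e\sigma'$.

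By the induction hypothesis, $W(S_e)$ tiles $A_{e^+}$ for each $e \in E_r$. Applying the homeomorphism $f_e$ yields that $\{f_{e\sigma'}(A_{(e\sigma')^+}) : \sigma' \in S_e\}$ tiles $f_e(A_{e^+})$; taking the union over $e \in E_r$ and invoking \eqref{eq:decomp} gives $\bigcup W(S) = A_r$. For non-overlap, two tiles sharing a common first edge $e$ are non-overlapping by the induction hypothesis applied to $S_e$ together with the injectivity of $f_e$, while two tiles with distinct first edges $e \neq e'$ have interiors contained in the disjoint interiors of $f_e(A_{e^+})$ and $f_{e'}(A_{(e')^+})$, using the standing non-overlap hypothesis. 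I expect the main obstacle to be bookkeeping rather than conceptual: checking that the pre-tree conditions cleanly descend to each $S_e$ (especially (3), where the required extension in $S_e$ must be teased out of an extension guaranteed by (3) applied to $e\sigma'$ in $S$), and confirming that the homeomorphisms $f_e$ preserve interior-disjointness as the tiles are refined.
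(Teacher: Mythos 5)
Your proof is correct, but it runs the induction in the opposite direction from the paper. You induct on the maximum path length and decompose at the root: split $S$ into the branches $S_e=\{\sigma' : e\sigma'\in S\}$ for $e\in E_r$, check (using that ``subpath'' means initial segment, so the extension guaranteed by condition (3) stays inside the $e$-branch) that each $S_e$ is a nonempty pre-tree rooted at $e^+$, apply the inductive hypothesis, and reassemble with one application of Equation~\eqref{eq:decomp} at the root together with the fact that each $f_e$ is a homeomorphism. The paper instead inducts on the number of paths $|S|$ and prunes at a deepest internal vertex: it takes a longest path $\sigma$, collapses the sibling family $S''=\{\sigma'e : e\in E_x\}$ to the single parent path $\sigma'$, notes the pruned set is again a pre-tree, and uses Equation~\eqref{eq:decomp} at the vertex $x$ to pass back from $W(S')$ to $W(S)$. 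The ingredients are identical (Equation~\eqref{eq:decomp} plus the standing non-overlap assumption and invertibility of the maps), so the difference is organizational: your top-down recursion obliges you to verify all three pre-tree axioms for each branch $S_e$ and to argue non-overlap across distinct branches explicitly, which you do correctly and which makes the tiling structure of each sub-branch transparent; the paper's bottom-up pruning makes the inductive step a minimal local change (one basic subdivision collapsed to its parent tile), at the cost of leaving the verification that the pruned set is a pre-tree to the reader. Both arguments share the same implicit finiteness assumption on $S$ (yours needs bounded path length, the paper's needs finitely many paths; for a finite digraph these coincide), which is satisfied by the pre-trees actually used later.
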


\begin{proof}
The proof is by induction on the number of paths $|S|$ in the pre-tree $S$.
For $|S|=1$, the pre-tree consists of a single vertex, and the set $W$
consists of the single tile $A_{r}$. For $|S| \geq1$, assume the assertion is
true for all pre-trees with fewer paths. Prune $S$ as follows to obtain a
smaller pre-tree $S^{\prime}$. Let $\sigma= \sigma_{1} \, \sigma_{2}
\cdots\sigma_{m}$ be a longest path in $S$; let $\sigma^{\prime}:= \sigma_{1}
\, \sigma_{2} \cdots\sigma_{m-1}$; and let $x$ be the last vertex on
$\sigma^{\prime}$. Then $S^{\prime\prime}:= \{ \sigma^{\prime}e \,: \, e \in
E_{x}\}$ is a subset of $S$. Since the set of paths $S^{\prime}:= S \setminus
S^{\prime\prime}\cup\{\sigma^{\prime}\}$ is a pre-tree, by the induction
hypothesis, $W(S^{\prime})$ is a tiling of $A_{r}$. Now
\[
\begin{aligned}
W(S) &= \{ f_{\sigma}(A_{\sigma^+}) : \sigma \in S \setminus S''\}\cup
\{ f_{\sigma} ( f_e (A_{e^+}) )\, : \, e \in E_x\} \\
W(S') &=  \{ f_{\sigma}(A_{\sigma^+}) : \sigma \in S \setminus S''\} \cup
\{ f_{\sigma'}(A_{x}) \}.
\end{aligned}
\]
But by Equation~\eqref{eq:decomp}, the set $\{ f_{e}(A_{e^{+}}) \, : \, e \in
E_{x}\}$ is a tiling of $A_{x}$. Therefore $W(S)$ is a also tiling of $A_{r}$.
\end{proof}

\begin{theorem}
\label{thm:tiling} Given a GIFS and a parameter $\overleftarrow{\theta}
\in\overleftarrow{\Sigma}^{\infty}$, if $\overleftarrow{\theta}$ is filling,
then $T(\overleftarrow{\theta}, \mathcal{S})$ is a tiling of ${\mathbb{R}}%
^{d}$ for every $\overleftarrow{\theta}$-sequence $\mathcal{S }= \{S_{0},
S_{1}, \dots\}$.
\end{theorem}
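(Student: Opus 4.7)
The plan is to reduce the statement to two separate claims: that $\langle T(\overleftarrow{\theta},\mathcal{S})\rangle = \mathbb{R}^d$ and that the tiles in $T(\overleftarrow{\theta},\mathcal{S})$ are pairwise non-overlapping. The first claim is immediate from the definition of \emph{filling}, so the substantive work is the second.

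For non-overlap, my first move is to read each patch $T(\overleftarrow{\theta},\mathcal{S},k)$ as an expanded copy of a tiling of an attractor component. Concretely, if $r=v_k$ is the root of $S_k$, then by Lemma~\ref{lem:overlap} the collection $W(S_k)=\{f_\sigma(A_{\sigma^+}):\sigma\in S_k\}$ is a non-overlapping tiling of $A_{v_k}$. Applying the homeomorphism $f_{\overleftarrow{\theta}|k}$ (invertible, since each $f_e$ is invertible) to both sides, I get that $T(\overleftarrow{\theta},\mathcal{S},k) = f_{\overleftarrow{\theta}|k}(W(S_k))$ is a non-overlapping tiling of the expanded set $f_{\overleftarrow{\theta}|k}(A_{v_k})$. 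Here I use that interiors and intersections are preserved by homeomorphisms, so ``pairwise empty-interior intersection'' transfers from $W(S_k)$ to its image.

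Next I exploit the nesting $T(\overleftarrow{\theta},\mathcal{S},k)\subset T(\overleftarrow{\theta},\mathcal{S},k+1)$, which the excerpt has already verified using condition (2) in the definition of a $\overleftarrow{\theta}$-sequence. Given any two distinct tiles $t_1,t_2\in T(\overleftarrow{\theta},\mathcal{S})$, there exist indices $k_1,k_2$ with $t_i\in T(\overleftarrow{\theta},\mathcal{S},k_i)$; setting $K=\max\{k_1,k_2\}$ and invoking nesting places both tiles in the single patch $T(\overleftarrow{\theta},\mathcal{S},K)$, where non-overlap was just established. Combined with the filling hypothesis (which gives the cover), this proves $T(\overleftarrow{\theta},\mathcal{S})$ is a tiling of $\mathbb{R}^d$.

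I do not expect a genuine obstacle: Lemma~\ref{lem:overlap} and the filling assumption essentially supply the two halves of the argument, and the only subtlety is making sure that distinct paths $\sigma,\sigma'\in S_k$ produce distinct tiles (otherwise one would worry about spurious coincidences before taking the union over $k$). This is not truly an issue, because two identical tiles cannot overlap with themselves in the strict sense of having common interior with a \emph{different} tile; so the statement of the theorem is understood set-theoretically and the argument above suffices. If one insists on treating $T(\overleftarrow{\theta},\mathcal{S})$ as a multiset, the same distinctness already follows from Lemma~\ref{lem:overlap} together with injectivity of $f_{\overleftarrow{\theta}|k}\circ f_{\sigma}$ on $A_{\sigma^+}$, together with the fact that distinct tiles in a non-overlapping covering of a set with nonempty interior must themselves be distinct as sets.
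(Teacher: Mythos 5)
Your proposal is correct and is essentially the paper's own argument: filling supplies the cover, Lemma~\ref{lem:overlap} plus the homeomorphism $f_{\overleftarrow{\theta}|k}$ gives non-overlap within each patch $T(\overleftarrow{\theta},\mathcal{S},k)$, and the nesting of patches (which the paper uses implicitly when it reduces to checking each $T(k)$) handles arbitrary pairs of tiles. Your closing remark on distinct paths versus distinct tiles is a harmless clarification not present in the paper.
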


\begin{proof}
Since $\overleftarrow{\theta}$ and $\mathcal{S}$ are fixed throughout the
proof, we omit them in the notation: let $T(k) := T(\overleftarrow{\theta},
\mathcal{S}, k), \, T := T(\overleftarrow{\theta}, \mathcal{S})$. Because it
is assumed that $\overleftarrow{\theta}$ is filling, $\langle T \rangle=
{\mathbb{R}}^{d}$. Hence, to show that $T$ is a tiling of ${\mathbb{R}}^{d}$
it suffices to show that pairs of distinct tiles in $T$ do not overlap. This
is equivalent to showing that, for all $k$, all pairs of distinct tiles in
$T(k)$ do not overlap. By definition, the set of tiles
\[
T(k) = f_{\overleftarrow{\theta}|k} \{f_{\sigma} (A_{\sigma^{+}}) : \sigma\in
S_{k}\} = f_{\overleftarrow{\theta}|k} W(S_{k}).
\]
By Lemma~\ref{lem:overlap}, distinct tiles in $W(S_{k})$ do not overlap. Since
the map $f_{\overleftarrow{\theta}|k}$ is a homeomorphism, distinct tiles in
$T(k)$ also do not overlap.
\end{proof}

\subsection{$\theta$-Tiling Examples}  \label{sec:examples} 
The examples in this section correspond to the four
examples of ${\overleftarrow{\theta}}$-sequences in Section~\ref{sec:ts}.

\begin{example}
[Tilings by Squares]\label{ex:squares} Consider the GIFS in ${\mathbb{R}}^{2}$
where the graph $G$ consists of a single vertex and four loops, which will be
referred to as edges $1,2,3,4$, and corresponding functions assigned to these
loops:
\[
\begin{aligned}  f_1(x) &= \frac12 x \\ f_2(x)&= \frac12 x + \Big (\frac12 , 0\Big) \end{aligned} \qquad
\qquad\begin{aligned}  f_3(x)&= \frac12 x + \Big(\frac12,  \frac12\Big )
\\ f_4(x)&= \frac12 x + \Big(0, \frac12 \Big).
\end{aligned}
\]
The attractor has only one component, which is a square.

If, for a given parameter $\theta$, the corresponding ${\overleftarrow{\theta
}}$-sequence $\mathcal{S}(\theta)$ is the one given in Example~\ref{ex:1} of
Section~\ref{sec:ts}, then the $(\theta, \mathcal{S}(\theta))$-tiling is the
standard tiling of the plane by unit squares, independent of $\theta$.

If, on the other hand, the corresponding ${\overleftarrow{\theta}}$-sequence
$\mathcal{S}(\theta,m)$ is the one given in Example~\ref{ex:2} of
Section~\ref{sec:ts}, then taking, for example, $\overleftarrow{\theta} =
\overleftarrow{1} \,\overleftarrow{2} \,\overleftarrow{3} \,\overleftarrow{4}
\,\overleftarrow{1} \,\overleftarrow{2} \,\overleftarrow{3} \,
\overleftarrow{4} \cdots$ and $m=1$, a patch of the spiral-like $(\theta,
\mathcal{S}(\theta,1))$-tiling is shown in Figure~\ref{fig:squares}.
Progressing outward, the squares increase in size.

%\begin{comment1}
\begin{figure}[htb]
\centering
\includegraphics[width=6cm, keepaspectratio]{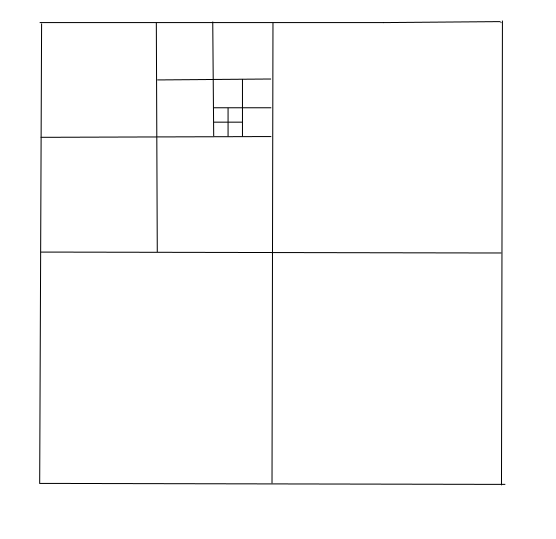}
\vskip -3mm
\caption{A patch of the tiling of Examples~\ref{ex:3} and \ref{ex:squares}.}
\label{fig:squares}
\end{figure}
%\end{comment1}

\end{example}

\begin{example}
[Uniform Tilings]Call a tiling \textit{uniform} if there are positive real
numbers $r,R$ such that each tile contains a ball of radius $r$ and is
contained in a ball of radius $R$. Given a parameter $\overleftarrow{\theta}
\in{\overleftarrow{\Sigma}}^{\infty}$ for a similarity GIFS, and taking the
corresponding ${\overleftarrow{\theta}}$-sequence $\mathcal{S}(\theta, q, Q)$
of Example~\ref{ex:3} of Section~\ref{sec:ts}, it is routine to verify that
$T(\overleftarrow{\theta},\mathcal{S}(q,Q) )$ is a uniform tiling. The tiling
that appears in Figure~\ref{fig:uniform} is uniform. Although not apparent in
this finite patch, there are infinitely many tile shapes up to congruence, but
finitely many up to similarity.

%\begin{comment1}
\begin{figure}[htb]
\centering
\includegraphics[width=6cm, keepaspectratio]{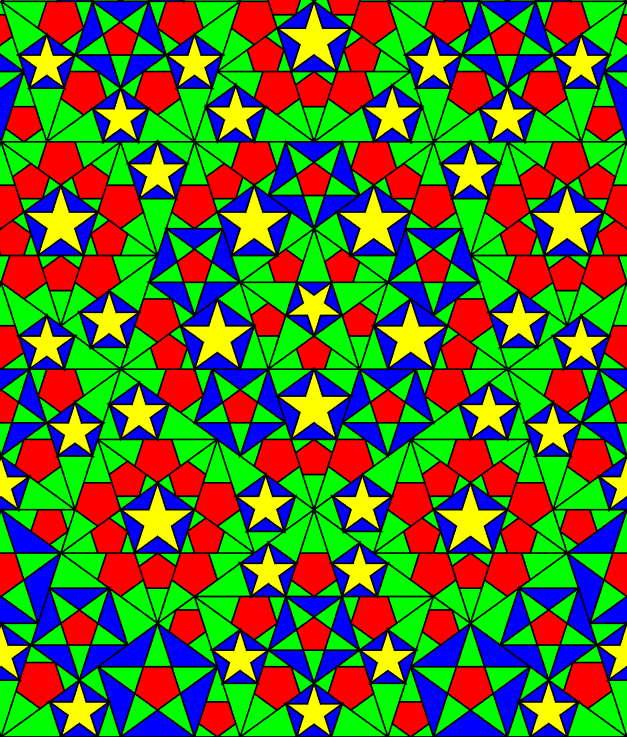}
\vskip -3mm
\caption{A uniform tiling.}
\label{fig:uniform}
\end{figure}
%\end{comment1}

\end{example}

\begin{example}
[Balanced Tilings]\label{ex:balancedT}

\begin{definition}
\label{def:balanced2} For a given similarity GIFS and filling parameter
${\overleftarrow{\theta}}$, the tiling $T(\ot) :=T({\overleftarrow{\theta}},
\mathcal{S}(\theta))$, where $\mathcal{S}({\overleftarrow{\theta}})$ is the
balanced ${\overleftarrow{\theta}}$-sequence (as in
Definition~\ref{def:balanced}), will be referred to as the \textbf{balanced
GIFS tiling}.
\end{definition}

\vskip 2mm

The \textit{order} of a tiling is the number of tiles up to congruence.

\begin{proposition}
A balanced GIFS tiling has finite order, which is at most $\max\{d(e): e \in
E\}$.
\end{proposition}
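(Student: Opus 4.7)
The plan is to show that each tile of $T(\ot)$ is an isometric copy of a set of the form $\phi_j(A_i)$, where $i\in[n]$ is a vertex of $G$ and $\phi_j$ is a similarity of $\R^d$ with scaling ratio $s^j$ for some positive integer $j$; then the balanced condition will force $j$ into a small finite range, which will bound the number of congruence classes.

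First, by \eqref{eq:tiling} every tile has the form $t=f_{\ot|k}\circ f_\sigma(A_{\sigma^+})$ for some $k\ge 0$ and some $\sigma\in S_k$. Because $(G,F)$ is a similarity GIFS with $\lambda(f_e)=s^{d(e)}$ (and hence $\lambda(f_{\overleftarrow{e}})=s^{-d(e)}$), the composition $f_{\ot|k}\circ f_\sigma$ is itself a similarity, with scaling ratio $s^{d(\sigma)-d(\ot|k)}=s^j$, where $j:=d(\sigma)-d(\ot|k)$. The balanced defining inequality \eqref{eq:4} then gives $1\le j\le d(\sigma_*)\le M:=\max\{d(e):e\in E\}$, so only finitely many scaling ratios can occur.

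Next, I would invoke the elementary fact that two similarities of $\R^d$ with the same scaling ratio differ by an isometry (since their quotient is a similarity of ratio $1$, hence a rigid motion). Applied to our tiles, this shows that the congruence class of $t$ is determined by the pair $(\sigma^+,j)$, and that for each fixed $i=\sigma^+$ the value $j$ ranges over the positive integers bounded by $\max\{d(e):e^+=i\}$. Summing the per-vertex counts and consolidating them into the single global quantity $M$ yields the stated bound and, in particular, the finiteness of the order.

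The main obstacle I expect is the bookkeeping at the end: the raw count produced by the argument is $\sum_{i\in[n]}\max\{d(e):e^+=i\}$, and one has to justify collapsing this to the cleaner quantity $\max\{d(e):e\in E\}$ asserted in the proposition (e.g.\ by observing that for each $i$ the value $j$ is realised only by paths whose terminal edge ends at $i$, so the distinct $j$-values across all vertices together still fit inside $\{1,\ldots,M\}$). Once this consolidation is in place, every other step, namely the scaling computation, the bound from the balanced inequality, and the congruence-via-equal-ratio observation, is a direct unwinding of the definitions already in place.
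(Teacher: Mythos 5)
Your proposal is correct and takes essentially the same route as the paper: each tile is a similar copy of a component, with ratio $s^{j}$ where $j=d(\sigma)-d(\ot|k)$, and the balanced inequality \eqref{eq:4} forces $0<j\leq d(\sigma_*)\leq\max\{d(e):e\in E\}$, which gives finiteness of the order. The ``consolidation'' you flag at the end is not actually achievable in general --- tiles $s^{j}A_i$ and $s^{j}A_{i'}$ with the same $j$ but non-congruent components $A_i\neq A_{i'}$ are distinct congruence classes, so the honest bound is $\sum_{i}\max\{d(e):e^{+}=i\}$ (or $n\max_e d(e)$) rather than $\max_e d(e)$ --- but the paper's own proof shares exactly this looseness, since it too only bounds the number of possible ratios $\lambda$ and then concludes finiteness.
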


\begin{proof}
Let $d_{\max} = \max\{d(e): e \in E\}$. Any tile $t \in
T(\overleftarrow{\theta}, \mathcal{S})$ is, by definition, congruent to
$\lambda\,A$, where $A$ is a component of the attractor and $\lambda=
s^{d({\sigma} ) -d ({\overleftarrow{\theta}|k})}$, where $\sigma\in S_{k}$ for
some $k$. By the definition of $S_{k}$ as given in Equation~\eqref{eq:4}, we
have
\[
0 <d({\sigma} ) - d ({\overleftarrow{\theta}|k}) \leq d_{\max}.
\]
Therefore there are at most finitely many possibilities for $\lambda$.
\end{proof}

A balanced tiling corresponding to the GIFS in Figure~\ref{fig:aman} in
Example~\ref{ex:Am} is shown in Figure~\ref{fig:aman2}. In this case the
scaling constant $s$ in Definition~\ref{def:balanced} is the square root of
the golden ratio $(1+\sqrt{5})/2$. In this tiling there are two similar Ammann
tiles, a small one and a large one. In the tree $H(S_{2})$ in
Figure~\ref{fig:aman}, the red leaves correspond to the large tiles in the
patch $T({\overleftarrow{\theta}},\mathcal{S}, 2)$ and the black leaves
correspond to the small tiles.

In \cite{bandtTILE} Bandt and Gummelt construct fractal versions of the Penrose kite and dart.  A balanced
tiling construction of a fractal Penrose tiling is shown in Figure~\ref{fig:fp}.  The directed graph consisting of
two vertices and five edges (3 loops) is shown on the left.   The scaling ratio of each of the
five functions (whose formulas are omitted) is the reciprocal of the golden ratio.  

The tilings in Figures~\ref{fig:fo1}, \ref{fig:fo2}, and \ref{fig:fo3}, as
well as those in Figures~\ref{fig:a} and \ref{fig:b}, are also examples of
balanced GIFS tilings.
\end{example}

\begin{figure}[htb]
\vskip -3mm
\includegraphics[width=8cm, keepaspectratio]{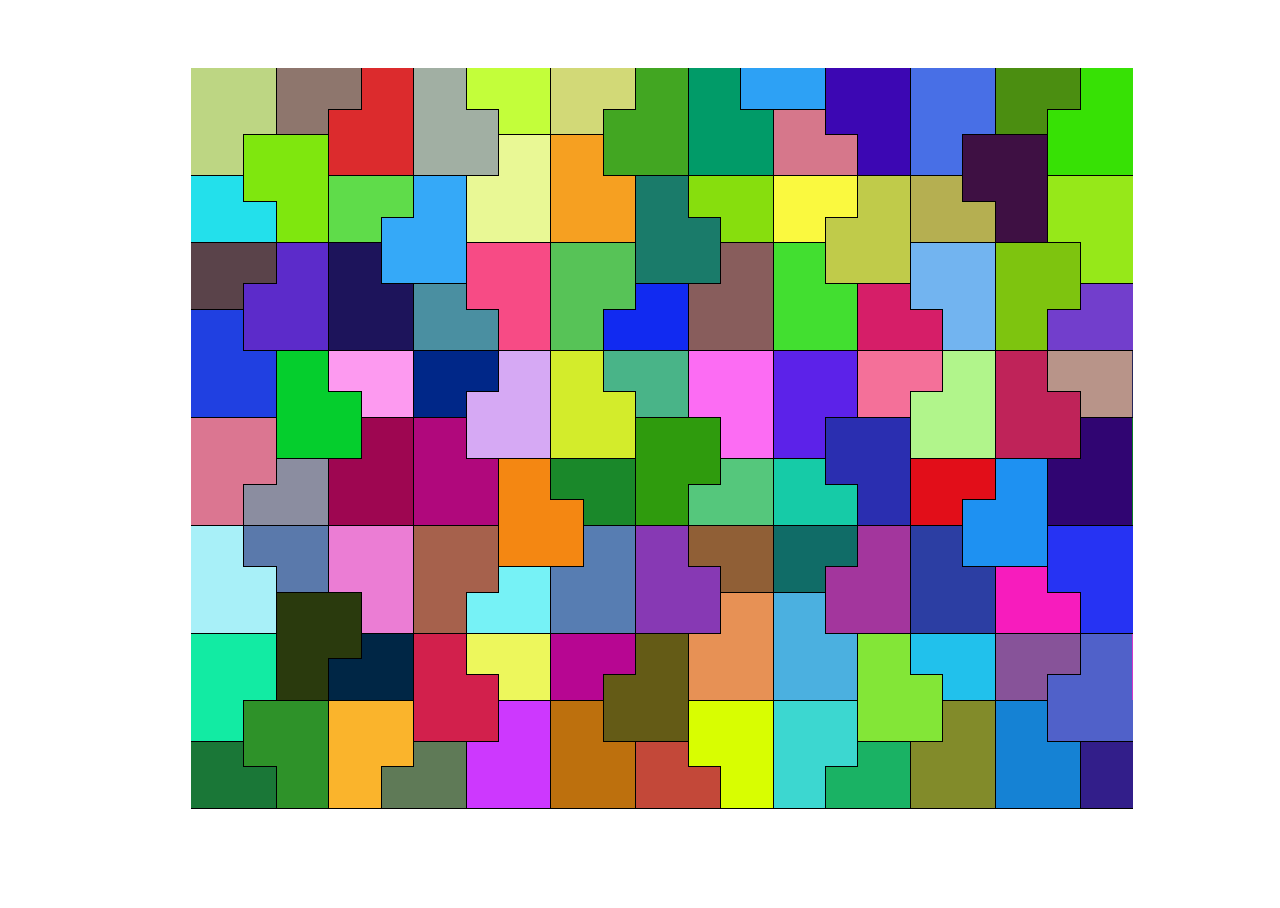}
\vskip -5mm
\caption{A balanced tiling using tiles due to R. Ammann.}
\label{fig:aman2}
\end{figure}

\begin{figure}[htb]
\includegraphics[width=5cm, keepaspectratio]{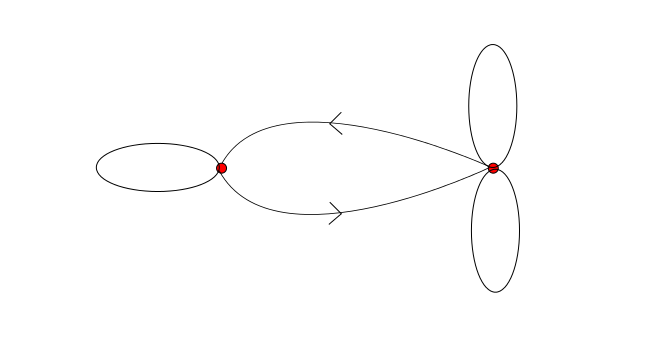}   \includegraphics[width=7cm, keepaspectratio]{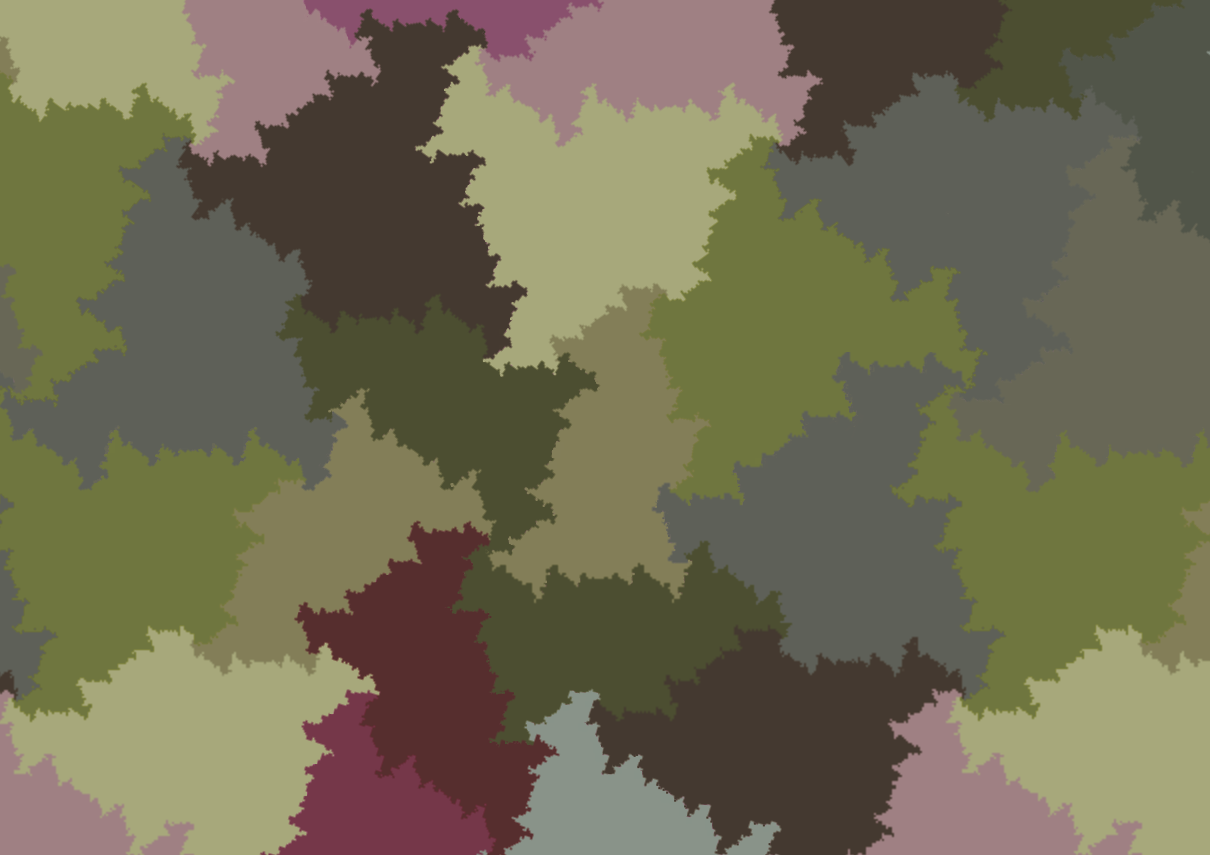} 
%\vskip -3mm
\caption{A fractal Penrose tiling.}
\label{fig:fp}
\end{figure}

\begin{figure}[htb]
\vskip -3mm
\includegraphics[width=9.5cm, keepaspectratio]{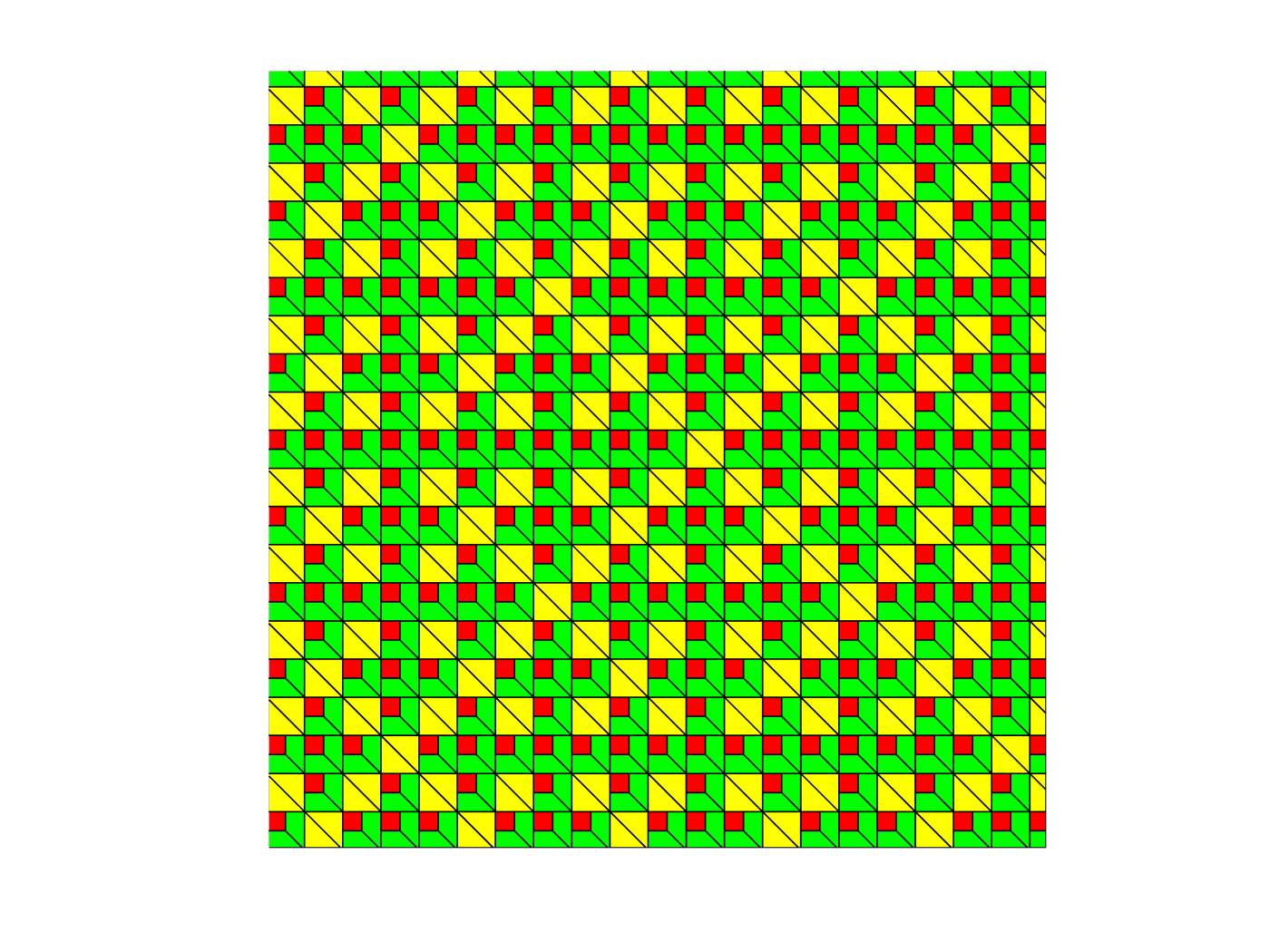}
\vskip -5mm
\caption{A balanced GIFS tiling.}
\label{fig:fo1}
\end{figure}

\begin{figure}[htb]
\begin{center}
\includegraphics[width=5cm, keepaspectratio]{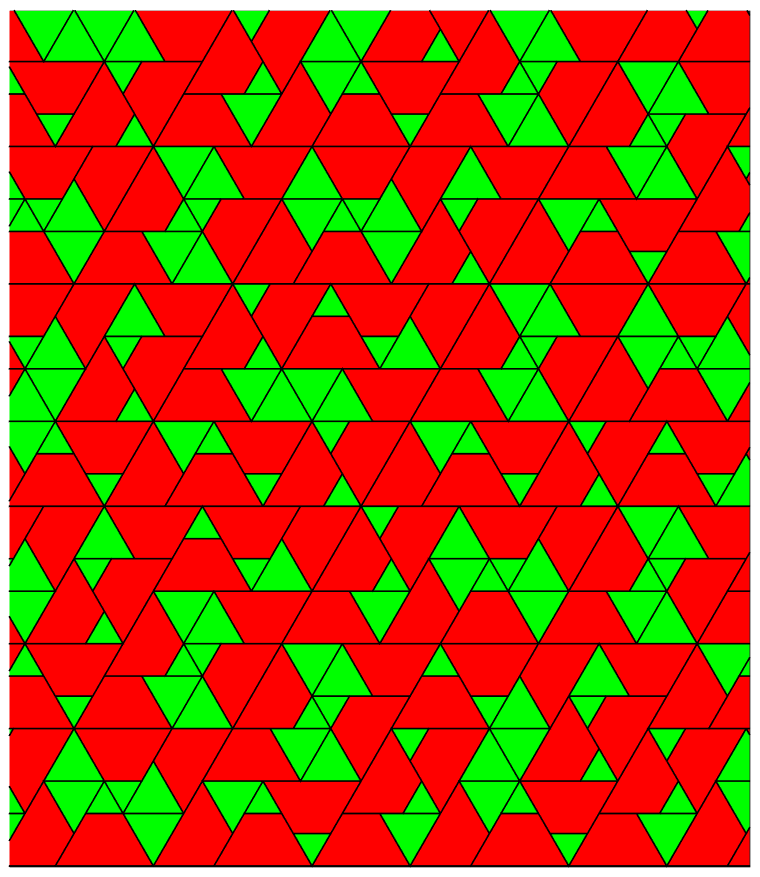}  \hskip 7mm
\includegraphics[width=5cm, keepaspectratio]{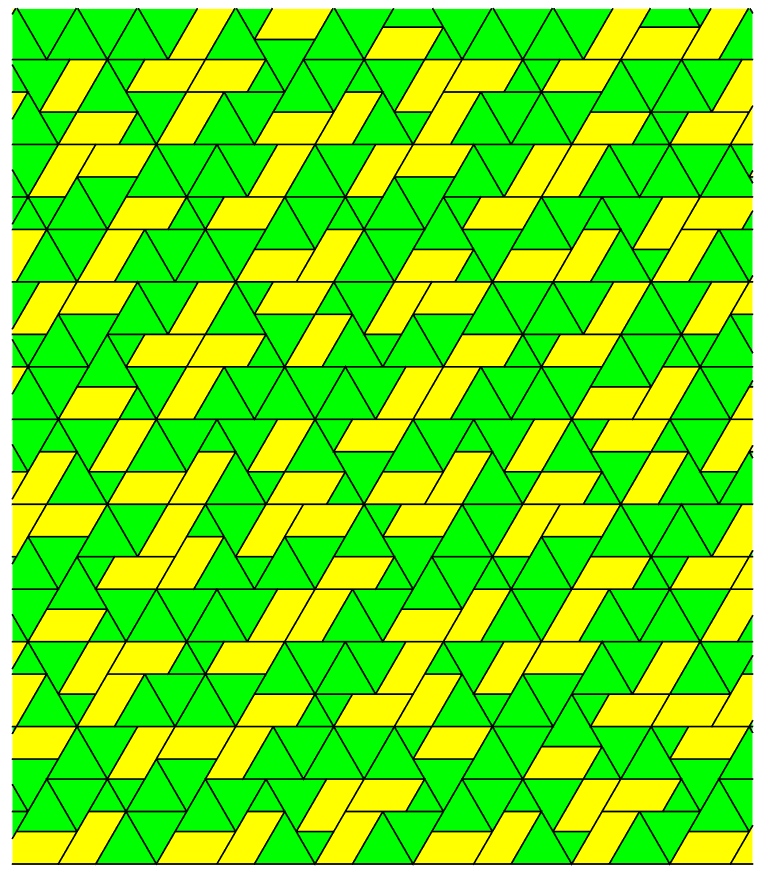}
\end{center}
%\vskip -3mm
\caption{Balanced GIFS tilings from the same GIFS.}
\label{fig:fo2}
\end{figure}

\begin{figure}[htb]
\begin{center}
\includegraphics[width=6cm, keepaspectratio]{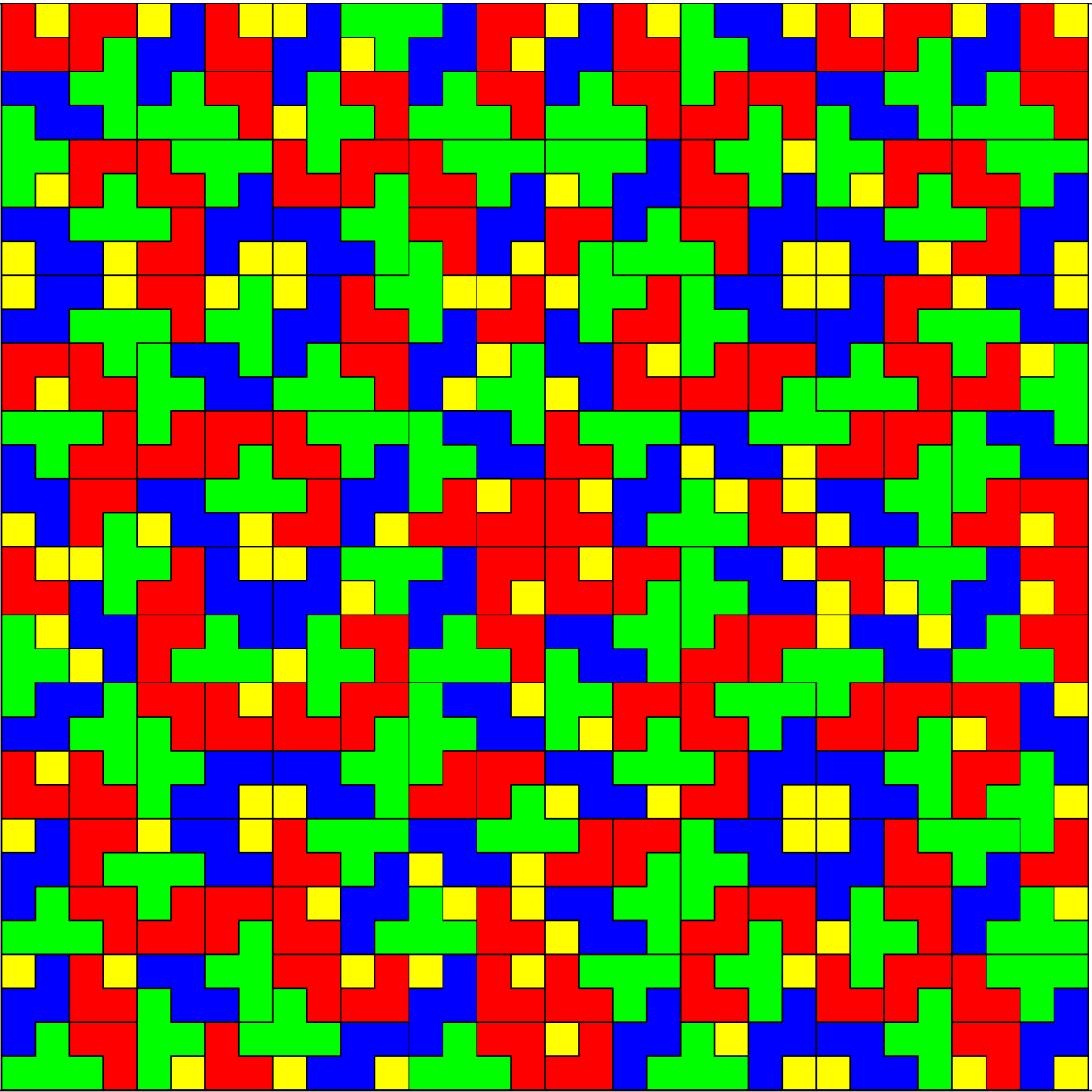} 
%\hskip 3mm\includegraphics[width=6cm, keepaspectratio]{polyomino2.png}
\end{center}
%\vskip -3mm
\caption{A balanced GIFS tiling.}
\label{fig:fo3}
\end{figure}

\subsection{Tile Addresses}

\label{sec:a} Assume that the parameter ${\overleftarrow{\theta}}$ of a GIFS
is fixed. Each tile in $t \in T(\overleftarrow{\theta}, \mathcal{S})$ can be
uniquely expressed as $t = f_{\overleftarrow{\theta}|k}\circ f_{\sigma
}(A_{\sigma^{+}})$, where $\theta_{k}^{-} \neq\sigma_{1}$. Define the unique
\textbf{address} of tile $t$ to be $k\bullet\sigma$. The proposition below
follows from the definition of the address and Equation~\eqref{eq:3}.

\begin{proposition}
If the address of a tile $t \in T(\overleftarrow{\theta}, \mathcal{S})$ is
$k\bullet\sigma$, then
\[
t = \Big \{ f_{\overleftarrow{\theta}|k} \big (\lim_{j \rightarrow\infty}
f_{\sigma\alpha|j}(x_{0}) \big )  \, : \, \alpha\in\Sigma^{\infty}, \,
\alpha^{-} = \sigma^{+} \Big \},
\]
where the limit is independent of the point $x_{0} \in{\mathbb{R}}^{d}$.
\end{proposition}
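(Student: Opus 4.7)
My plan is to unpack the address definition and then invoke the standard GIFS attractor characterization pointwise. By the definition of the address $k\bullet\sigma$, the tile is $t = f_{\overleftarrow{\theta}|k}(f_{\sigma}(A_{\sigma^{+}}))$. Since for $j \geq |\sigma|$ we have $\sigma\alpha|j = \sigma\cdot(\alpha|(j-|\sigma|))$ and $f_{\sigma}$ is continuous, pushing $f_{\sigma}$ through the limit reduces the claim to showing
\[
A_{\sigma^{+}} \;=\; \Big\{\lim_{j\to\infty} f_{\alpha|j}(x_{0}) \,:\, \alpha\in\Sigma^{\infty},\ \alpha^{-}=\sigma^{+}\Big\},
\]
with the limit independent of $x_{0}\in\mathbb{R}^{d}$. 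After that, applying the homeomorphism $f_{\overleftarrow{\theta}|k}$ to both sides and then $f_\sigma$ in front yields the statement.

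The first substep is to verify that the limit exists and is $x_0$-independent. Setting $\lambda:=\max_{e\in E}\lambda_{f_{e}}<1$, the composition estimate $|f_{\alpha|j}(x_{0})-f_{\alpha|j}(y_{0})|\leq \lambda^{j}|x_{0}-y_{0}|$ gives both that consecutive terms are Cauchy (sum a geometric series) and that any two choices of $x_{0}$ yield the same limit. So the map $\pi\colon\alpha\mapsto \lim_{j}f_{\alpha|j}(x_{0})$ is well defined on the set of infinite paths starting at $\sigma^{+}$.

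The second substep is to show that $\pi$ surjects onto $A_{\sigma^+}$. Iterating Equation~\eqref{eq:decomp}, the compact sets $f_{\alpha|j}(A_{(\alpha|j)^{+}})$ are nested decreasing subsets of $A_{\sigma^{+}}$ with diameters tending to $0$; choosing $x_{0}\in A_{(\alpha|j)^{+}}$ shows $\pi(\alpha)$ lies in their intersection, hence in $A_{\sigma^{+}}$. Conversely, given $a\in A_{\sigma^{+}}$, Equation~\eqref{eq:decomp} lets me inductively pick edges $\alpha_{1},\alpha_{2},\dots$ with $a\in f_{\alpha|j}(A_{(\alpha|j)^{+}})$ for every $j$; the shrinking-diameter argument then forces $a=\pi(\alpha)$. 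This is exactly the pointwise content of the limit formula in Theorem~\ref{thm:one} / Equation~\eqref{eq:3}, used here with $B=\{x_{0}\}$.

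I do not expect any real obstacle: the argument is essentially the standard address map for a GIFS attractor, and the rest is bookkeeping with the contraction estimate and continuity of $f_{\sigma}$. The one minor care-point is ensuring the limit is truly independent of $x_{0}$ (not merely existent on $A_{\sigma^{+}}$), which the uniform contraction estimate above takes care of in one line.
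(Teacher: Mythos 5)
Your proposal is correct and follows essentially the route the paper intends: the paper offers no detailed argument, simply noting the proposition "follows from the definition of the address and Equation~\eqref{eq:3}," and your write-up is exactly that argument fleshed out (unpack $t=f_{\overleftarrow{\theta}|k}\circ f_{\sigma}(A_{\sigma^{+}})$, then use the standard pointwise address-map characterization of $A_{\sigma^{+}}$ via the uniform contraction estimate and the nested images from Equation~\eqref{eq:decomp}). No gaps; the $x_{0}$-independence and the passage of $f_{\sigma}$ and $f_{\overleftarrow{\theta}|k}$ through the limit are handled correctly.
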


\section{Tiling Hierarchy}

\label{sec:hier}

Given a parameter ${\overleftarrow{\theta}}$ of a GIFS and two
${\overleftarrow{\theta}}$-sequences ${\mathcal{S}} = \{S_{0}, S_{1},
S_{2},\dots\}$ and ${\mathcal{S }^{\prime}} = \{S^{\prime}_{0}, S^{\prime}%
_{1}, S^{\prime}_{2},\dots\}$ such that $\mathcal{S }\neq\mathcal{S}^{\prime}%
$, we say that ${\mathcal{S}^{\prime}}$ \textit{lies above} $\mathcal{S}$ if,
for every $k$, each path in $S^{\prime}_{k}$ is a subpath of a path in $S_{k}$.

\begin{lemma}
\label{lem:hier} If ${\mathcal{S}^{\prime}}$ lies above $\mathcal{S}$, then
each tile in $T({\overleftarrow{\theta}}, \mathcal{S}^{\prime})$ is tiled by
tiles in $T({\overleftarrow{\theta}}, \mathcal{S})$.
\end{lemma}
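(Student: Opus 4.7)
The plan is to work one tile at a time: for each tile $t'$ of the coarser tiling $T(\overleftarrow{\theta},\mathcal{S}')$, build a local pre-tree $R$ rooted at the head vertex of $t'$ so that the tiles of $T(\overleftarrow{\theta},\mathcal{S})$ indexed by $R$ subdivide $t'$. By hypothesis, a given $\sigma'\in S'_{k}$ is a subpath of some path in $S_{k}$, so the ``descendants'' of $\sigma'$ that lie in $S_{k}$ should be exactly what refines the tile $t'=f_{\overleftarrow{\theta}|k}\circ f_{\sigma'}(A_{(\sigma')^{+}})$.

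Concretely, I would define
\[
R := \{\tau \in \Sigma^{*} : \sigma'\tau \in S_{k}\},
\]
where $\tau$ is viewed as a path starting at $(\sigma')^{+}$, and verify that $R$ is a pre-tree rooted at $(\sigma')^{+}$. Condition (1) of Definition~\ref{def:pt} is automatic from the definition of $R$. Condition (2) follows because a proper subpath relation inside $R$ would, after prepending $\sigma'$, give a proper subpath relation inside $S_{k}$, contradicting property (2) for the pre-tree $S_{k}$. For condition (3), given a proper subpath $\tau'$ of some $\tau\in R$ and an edge $e$ with $e^{-}=(\tau')^{+}$, the path $\sigma'\tau'$ is a proper subpath of $\sigma'\tau\in S_{k}$, so condition (3) applied to $S_{k}$ yields $\sigma'\tau' e$ as a subpath of a path in $S_{k}$; peeling off $\sigma'$ gives $\tau' e$ as a subpath of a path in $R$. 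Once $R$ is known to be a pre-tree, Lemma~\ref{lem:overlap} produces that $W(R)=\{f_{\tau}(A_{\tau^{+}}):\tau\in R\}$ tiles the attractor component $A_{(\sigma')^{+}}$. Pushing forward by the homeomorphism $f_{\overleftarrow{\theta}|k}\circ f_{\sigma'}$ and using $f_{\sigma'}\circ f_{\tau}=f_{\sigma'\tau}$, the image is a tiling of $t'$ by the tiles $t(\overleftarrow{\theta},\mathcal{S},k,\sigma'\tau)$ for $\tau\in R$, each of which lies in $T(\overleftarrow{\theta},\mathcal{S},k)\subseteq T(\overleftarrow{\theta},\mathcal{S})$.

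The main delicate point is the verification of condition (3) for $R$, especially at the ``root'' level where $\tau'$ is the empty path. I would split into two subcases: if $\sigma' \in S_{k}$ itself, then $R$ is the trivial pre-tree consisting only of the single vertex $(\sigma')^{+}$, and $t'$ is tiled by itself; otherwise $\sigma'$ is a strictly proper subpath of some path in $S_{k}$, and property (3) of $S_{k}$ applied at $\sigma'$ supplies every edge $\sigma' e$ needed to populate $R$ enough to verify its own condition (3) at the root. After this bookkeeping, the remainder is just the transport of Lemma~\ref{lem:overlap} along the homeomorphism $f_{\overleftarrow{\theta}|k}\circ f_{\sigma'}$; no new analytic input is required.
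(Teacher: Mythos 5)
Your proof is correct, and it organizes the argument differently from the paper. The paper argues in the dual direction: it reduces the lemma to showing that each tile $t=f_{\overleftarrow{\theta}|k}\circ f_{\sigma}(A_{\sigma^{+}})$ of the finer tiling $T(\overleftarrow{\theta},\mathcal{S})$ is \emph{contained} in the coarse tile $f_{\overleftarrow{\theta}|k}\circ f_{\omega}(A_{\omega^{+}})$, where $\omega\in S'_{k}$ is the subpath of $\sigma$ guaranteed by the ``lies above'' hypothesis, and then asserts that this containment follows ``in the same way as the proof of Lemma~\ref{lem:overlap}''; the fact that the fine tiles inside a given coarse tile actually cover it is left implicit (it follows since, by Lemma~\ref{lem:overlap}, both patches at stage $k$ tile the same set $f_{\overleftarrow{\theta}|k}(A_{v_k})$ without overlap). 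You instead fix a coarse tile indexed by $\sigma'\in S'_{k}$, form the relative path set $R=\{\tau:\sigma'\tau\in S_{k}\}$, check directly that $R$ is a pre-tree rooted at $(\sigma')^{+}$ (your reduction of conditions (2) and (3) for $R$ to the corresponding conditions for $S_{k}$, including the root case and the degenerate case $\sigma'\in S_{k}$, is exactly right), and then invoke Lemma~\ref{lem:overlap} as a black box and transport by the homeomorphism $f_{\overleftarrow{\theta}|k}\circ f_{\sigma'}$. What your route buys is a self-contained proof of the actual subdivision statement, with no appeal to ``the same argument as before'' and no hidden covering step; what the paper's route buys is brevity, at the cost of leaving both the covering of the coarse tile and the pre-tree-style induction implicit. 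Both arguments ultimately rest on the same engine, Lemma~\ref{lem:overlap}.
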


\begin{proof}
It is sufficient to show that each $t \in T({\overleftarrow{\theta}},
\mathcal{S})$ is contained in a tile $t^{\prime}\in T({\overleftarrow{\theta}%
}, \mathcal{S}^{\prime})$. Let $t = f_{\overleftarrow{\theta}|k} \circ
f_{\sigma}(A_{\sigma^{+}}) \in T({\overleftarrow{\theta}}, \mathcal{S})$.
Since ${\mathcal{S}^{\prime}}$ lies above $\mathcal{S}$, there is a path
$\omega\in S^{\prime}_{k}$ such that $\omega$ is a subpath of $\sigma$. We
claim that $t \in f_{\overleftarrow{\theta}|k} \circ f_{\omega} (A_{\omega
^{+}}) \in T({\overleftarrow{\theta}}, \mathcal{S }^{\prime})$, which would
complete the proof. The claim, however, follows in same way as the proof of
Lemma~\ref{lem:overlap}.
\end{proof}

\begin{definition}
Given a sequence $\mathcal{S}_{0}, \mathcal{S}_{1}, \mathcal{S}_{2}, \dots$ of
${\overleftarrow{\theta}}$-sequences such that $\mathcal{S}_{k+1}$ lies above
$\mathcal{S}_{k}$ for all $k$, the sequence of tilings
\[
T({\overleftarrow{\theta}}, \mathcal{S}_{0}),T({\overleftarrow{\theta}},
\mathcal{S}_{1}),T({\overleftarrow{\theta}}, \mathcal{S}_{2}), \dots
\]
is called a \textbf{hierarchy} for the tiling $T({\overleftarrow{\theta}}) :=
T({\overleftarrow{\theta}}, \mathcal{S}_{0})$. The term ``hierarchy" is used
because, as follows from Lemma~\ref{lem:hier}, each tile in
$T({\overleftarrow{\theta}}, \mathcal{S}_{k})$ is contained in a tile in
$T({\overleftarrow{\theta}}, \mathcal{S}_{k+1})$, for all $k$.
\end{definition}

\subsection{Hierarchies for a Balanced GIFS Tiling}

For a fixed similarity GIFS, fix a parameter $\overleftarrow{\theta}
\in{\overleftarrow{\Sigma}}^{\infty}$. Let $\mathcal{S}$ denote the balanced
${\overleftarrow{\theta}}$-sequence as in Definition~\ref{def:balanced}, so
that $T({\overleftarrow{\theta}}, \mathcal{S})$ is the corresponding balanced
GIFS tiling. We now introduce two particular hierarchies in the balanced case.

For integers $n,k\geq0$, let
\[
\begin{aligned}
S_{(n,k)} &=  \big \{ \sigma  \in \Sigma^* \, : \,  \sigma^- = \theta_k^- \quad \text{and} \quad
0 <  d (\sigma)-d (\oo \theta|k) + d (\oo \theta|n) \leq  d (\sigma_*) \big \} \\
\widehat S_{(n,k)} &=  \big \{ \sigma  \in \Sigma^* \, : \,  \sigma^- = \theta_k^- \quad \text{and} \quad
0 <  d (\sigma)-d (\oo \theta|k) +n \leq  d (\sigma_*) \big \}
\end{aligned}
\]
and
\[
\begin{aligned} {\mathcal S}_n &:= \mathcal S_n (\oo\theta) = (S_{n,0}, S_{n,1}, S_{n,2}, \dots ) \\
{\widehat{\mathcal S}}_n &:=	\widehat{\mathcal S}_n (\oo\theta) = (\widehat S_{n,0}, \widehat S_{n,1}, \widehat S_{n,2}, \dots ) .
\end{aligned}
\]
Note that $\mathcal{S}_{0} = \mathcal{S}$ and, for $n\geq1$, the first
few terms in the above sequences may be empty. As in
Proposition~\ref{prop:balanced}, it is not hard to show that ${\mathcal{S}}%
_{n}$ and ${\widehat{\mathcal{S}}}_{n}$ are $\overleftarrow{\theta}%
$-sequences. And, according to Proposition~\ref{prop:hier} below, the
corresponding sequences of tilings
\[
\mathcal{T }:= (T_{0}, T_{1}, T_{2}, \dots) \qquad\text{and} \qquad
\widehat{\mathcal{T}} := (\widehat{T}_{0}, \widehat{T}_{1}, \widehat{T}_{2},
\dots),
\]
where $T_{n} = T(\overleftarrow{\theta}, \mathcal{S}_{n})$ and $\widehat{T}%
_{n} = \widehat{T}(\overleftarrow{\theta}, \widehat{\mathcal{S}}_{n})$, are
tiling hierarchies. Call these \textbf{balanced tiling hierarchies}. The
subscript $n$ is referred to as the \textbf{level} of the tiling in the hierarchy. For the
chair tiling shown in Figure~\ref{fig:chair}, the balanced hierarchies
$\mathcal{T}$ and $\widehat{\mathcal{T}}$ are the same, the first three levels
of the hierarchy shown using thicker lines in Figure~\ref{fig:chair}.

\begin{definition}
\label{def:bs} For a similarity GIFS and for each $i = 1,2, \dots, N$, let $B_{i} := \{f_{e}(A_{e^{+}}) : e \in E_{i}\}$.
These sets are fundamental objects of the GIFS because the $i^{\text{th}}$ component $A_i$ of the attractor
is, by definition, $A_i = \bigcup \{ X : X\in B_i\}$.   Call any set congruent to 
$s^j \, B_i :=   \{  s^j \, f_e (A_{e+}) : e\in E_i \}$, where $s$ is the scaling constant, a \textbf{basic}
$i$-\textbf{subdivision}.  For example, for the tiling in Figure~\ref{fig:a}, the four basic subdivisions, up to a
similarity,
are shown in Figure~\ref{fig:basic}.
\end{definition}

\begin{figure}[htb]
\includegraphics[width=3cm, keepaspectratio]{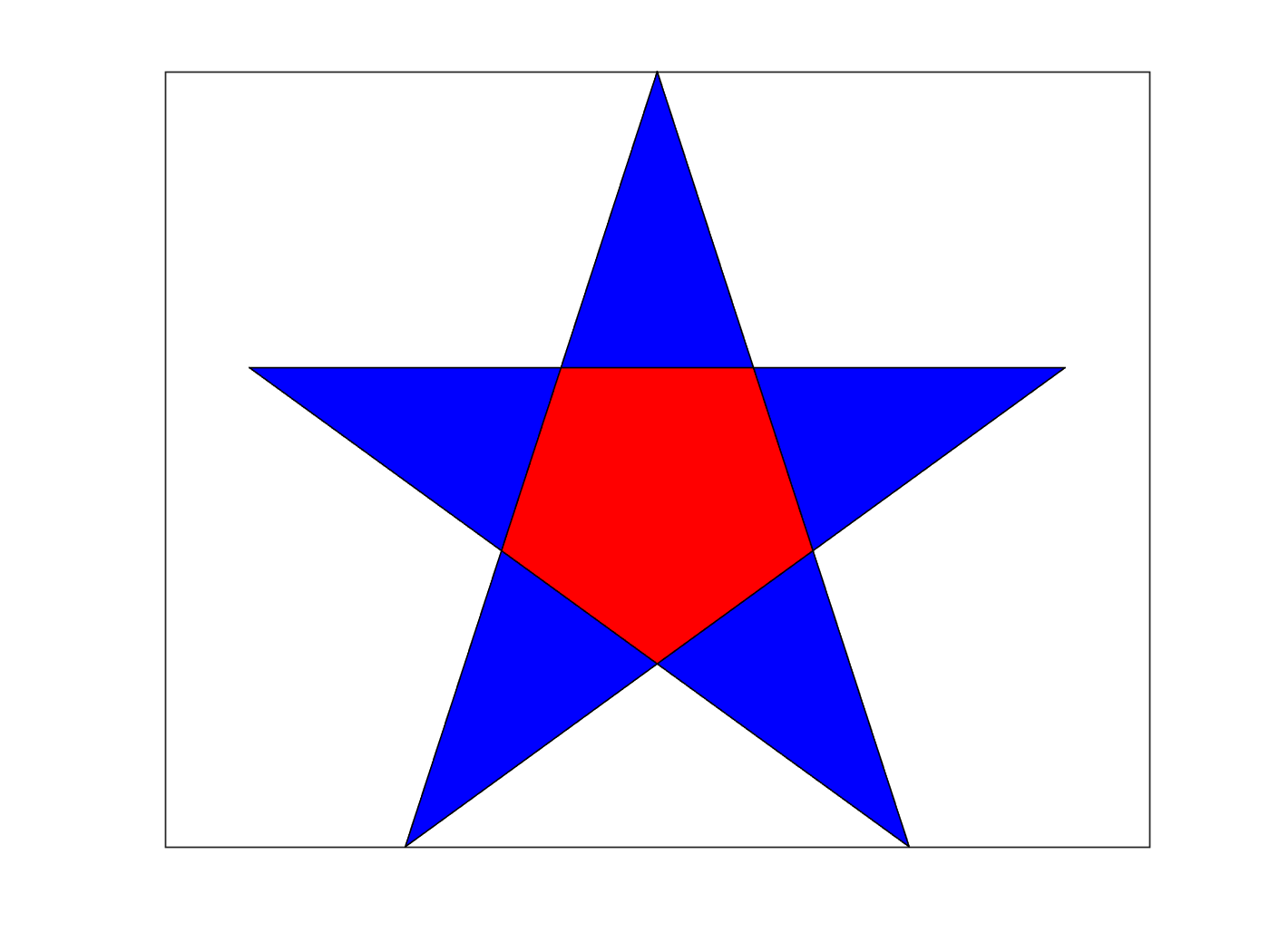} \hskip 1mm\includegraphics[width=3cm, keepaspectratio]{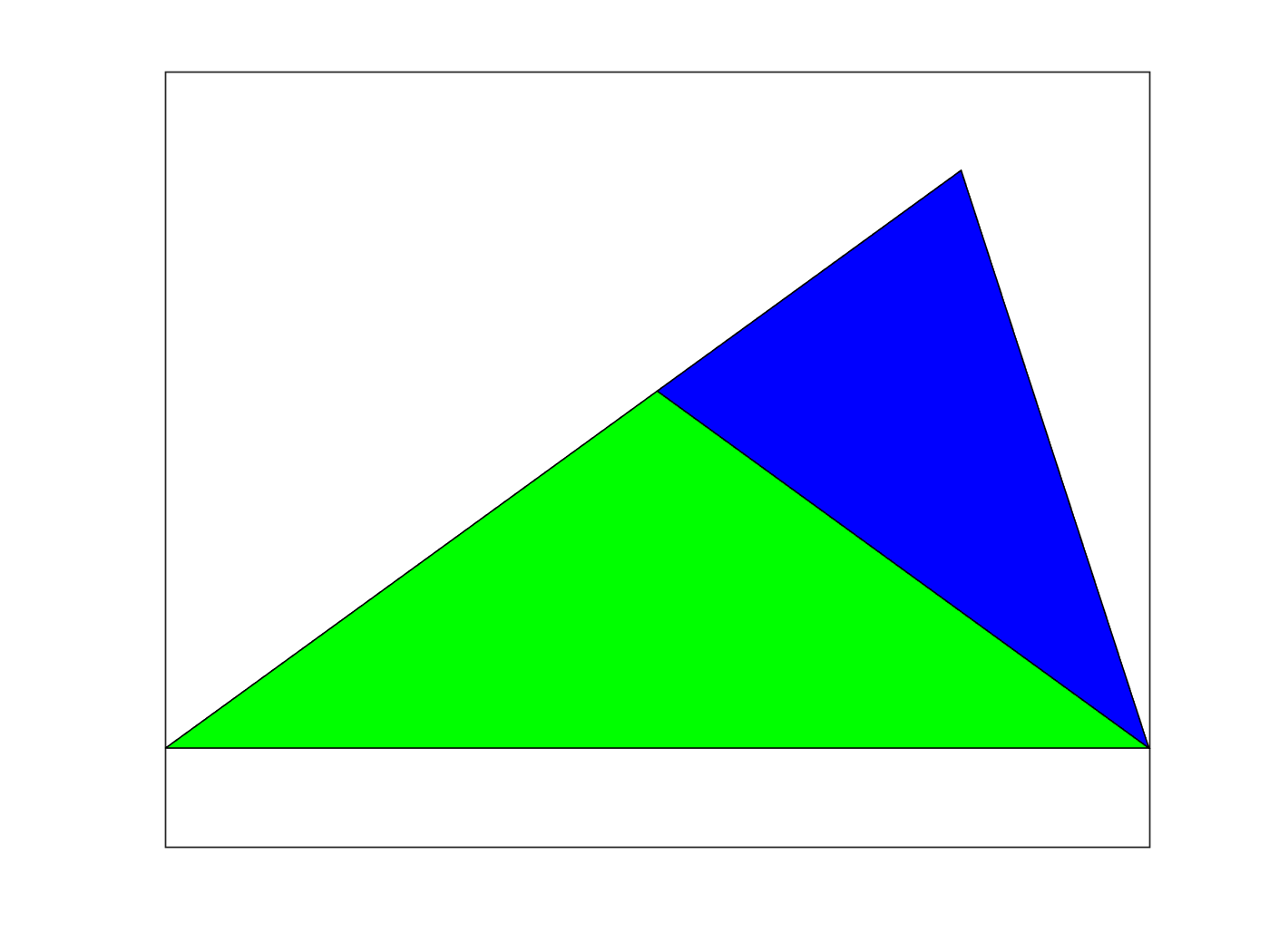} \hskip 1mm \includegraphics[width=3cm, keepaspectratio]{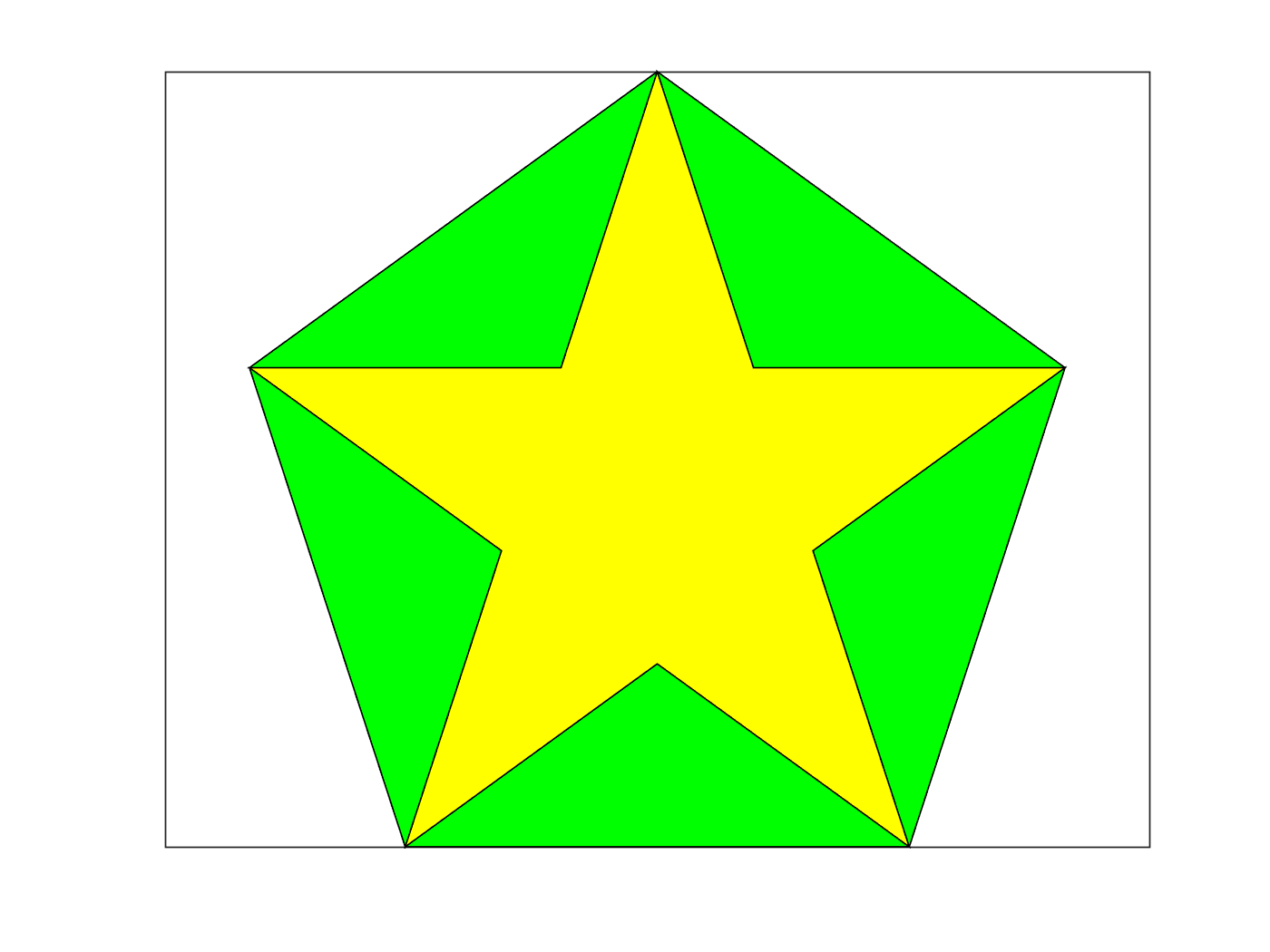} \hskip 1mm\includegraphics[width=3cm, keepaspectratio]{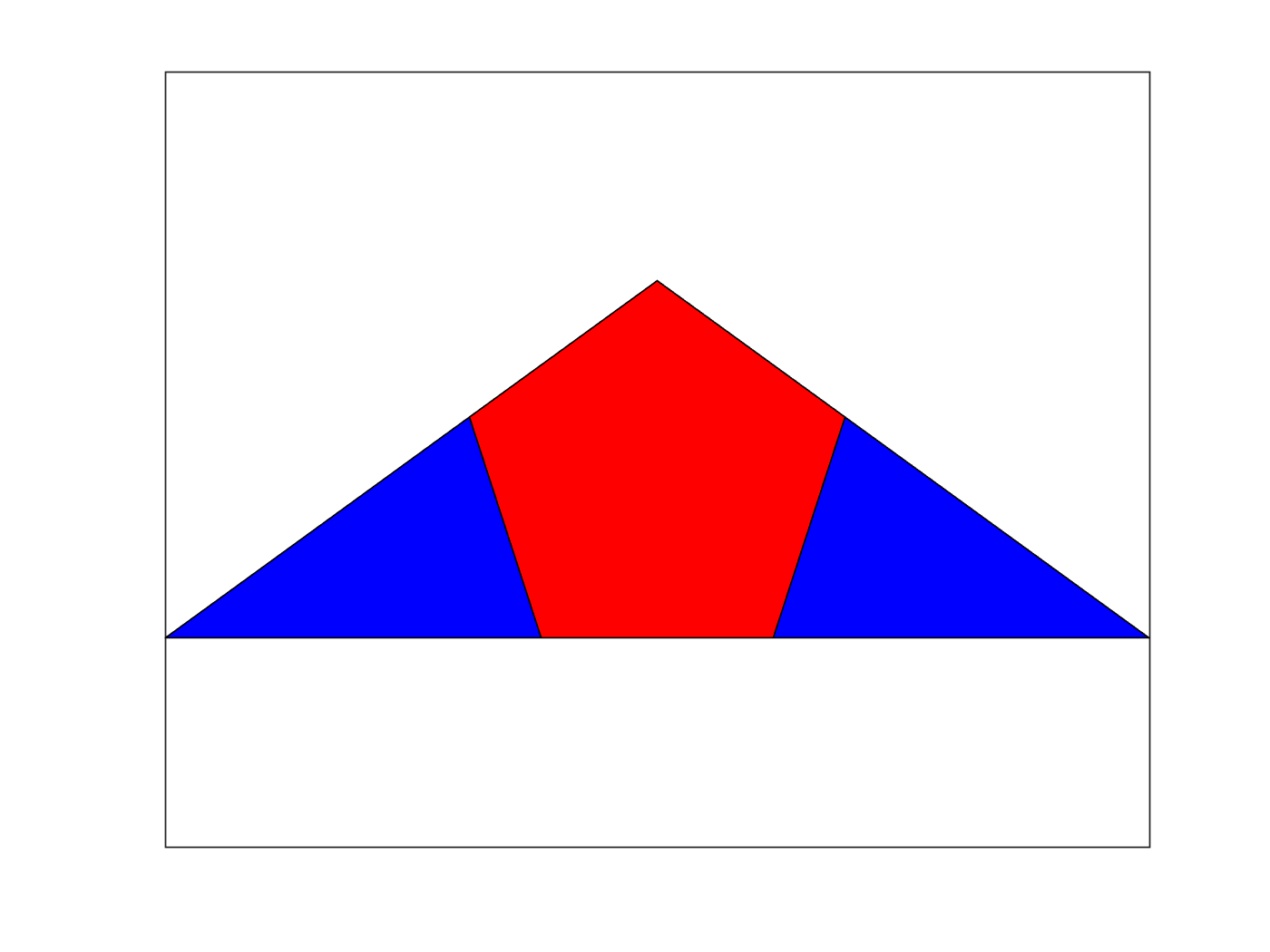}
%\vskip -3mm
\caption{Basic subdivisions for  the balanced tiling in Figure~\ref{fig:a}.}
\label{fig:basic}
\end{figure}

Basic subdivisions occur as patches in a balanced tiling $T(\ot, \mathcal S)$ when, in the
 tree $H(S_k)$ (as defined in Proposition~\ref{prop:h}), there is a vertex $i$ all of whose 
children are leaves.  In this case the basic subdivision is $B_i = \{ t(\ot, \mathcal S, k, \sigma) \,: \,
\sigma \in P_i\}$, where $P_i$ is the set of paths in the pre-tree $S_k$ corresponding to the
paths in $H(S_k)$ from the root to a child of vertex $i$.  
The union of the tiles in $B_i$ is
a set that is similar to the component $A_{h(i)}$ of the attractor, where $h$ is the homomorphism
of Proposition~\ref{prop:h}.

\begin{proposition}
\label{prop:hier} Given a parameter $\overleftarrow{\theta}\in
\overleftarrow{\Sigma}^{\infty}$ for a similarity GIFS, the sequences
\[
\mathcal{T }:= (T_{0}, T_{1}, T_{2}, \dots) \qquad\text{and} \qquad
\widehat{\mathcal{T}} := (\widehat{T}_{0}, \widehat{T}_{1}, \widehat{T}_{2},
\dots),
\]
where $T_{n} = T(\overleftarrow{\theta}, \mathcal{S}_{n})$ and $\widehat{T}%
_{n} = \widehat{T}(\overleftarrow{\theta}, \widehat{\mathcal{S}}_{n})$, have
the following properties:

\begin{enumerate}
\item $T_{0} = \widehat{T}_{0} = T({\overleftarrow{\theta}}, \mathcal{S})$;

\item $\mathcal{T}$ and $\widehat{ \mathcal{T}}$ are hierarchies for the
balanced GIFS tiling $T({\overleftarrow{\theta}}, \mathcal{S})$;

\item $\widehat{\mathcal{T}}$ is a \textit{refinement} of the hierarchy
$\mathcal{T}$ in that every tiling in $\mathcal{T}$ appears in
$\widehat{\mathcal{T}}$;

\item each tile in $\widehat{T}_{n+1}$ is either a tile in $\widehat{T}_{n}$
or the union of tiles in $\widehat{T}_{n}$ that comprise a basic subdivision.
\end{enumerate}
\end{proposition}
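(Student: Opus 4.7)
The plan is to unpack the definitions of $S_{(n,k)}$ and $\widehat S_{(n,k)}$ and track how the three quantities $d(\sigma)$, $d(\ot|k)$, and the offset (either $d(\ot|n)$ or $n$) interact in the defining inequalities. For (1), I would note that setting $n=0$ makes the offset vanish, reducing both conditions to $0 < d(\sigma)-d(\ot|k) \leq d(\sigma_*)$, which is exactly the condition defining the balanced pre-tree $S_k$ in Equation~\eqref{eq:4}; hence $\mathcal S_0 = \widehat{\mathcal S}_0 = \mathcal S$ and $T_0 = \widehat T_0 = T(\ot,\mathcal S)$.

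For (2) I would first check that each $\mathcal S_n$ and $\widehat{\mathcal S}_n$ is a $\ot$-sequence by repeating the argument from the proof of Proposition~\ref{prop:balanced} almost verbatim, since the offsets do not depend on $\sigma$. I would then show that $\mathcal S_{n+1}$ lies above $\mathcal S_n$ by producing, for each $\sigma\in S_{(n+1,k)}$, a (possibly empty) extension $\omega$ with $\sigma\omega\in S_{(n,k)}$. The dichotomy is clean: if $d(\sigma)-d(\ot|k)+d(\ot|n)>0$, then $\sigma$ itself already lies in $S_{(n,k)}$, because the upper bound for $S_{(n,k)}$ is smaller than that for $S_{(n+1,k)}$ and so is automatic; otherwise I would take $\omega$ of maximal length at $\sigma^+$ subject to $d(\sigma\omega)-d(\ot|k)+d(\ot|n)\leq d(\omega_*)$ and argue positivity by the same ``extend by one more edge contradicts maximality'' device used in Proposition~\ref{prop:balanced}. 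The argument for $\widehat{\mathcal S}_{n+1}$ above $\widehat{\mathcal S}_n$ is identical with $d(\ot|n)$ replaced by $n$. Lemma~\ref{lem:hier} then promotes ``lies above'' to a tiling hierarchy, finishing (2).

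Part (3) will fall out from the single observation
\[
S_{(n,k)} \;=\; \widehat S_{(d(\ot|n),\,k)}
\]
for every $n$ and $k$, verified by direct comparison of the defining inequalities. Consequently $T_n = \widehat T_{d(\ot|n)}$, so every tiling appearing in $\mathcal T$ also appears in $\widehat{\mathcal T}$, which is therefore a refinement.

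The substantive step is (4). I would fix a tile $t = f_{\ot|k}\circ f_\omega(A_{\omega^+}) \in \widehat T_{n+1}$, so $\omega\in \widehat S_{(n+1,k)}$, and set $m := d(\omega)-d(\ot|k)+n$. Integrality of $d$ together with $0<m+1$ forces $m\geq 0$. If $m\geq 1$, then $\omega$ also lies in $\widehat S_{(n,k)}$ (decreasing the offset by $1$ only relaxes the upper bound), so $t$ is already a tile of $\widehat T_n$. If $m=0$, then any extension $\omega\tau\in \widehat S_{(n,k)}$ must satisfy $0<d(\tau)\leq d(\tau_*)$; since every $d(e)$ is a positive integer, this forces $\tau$ to be a single edge $e\in E_{\omega^+}$. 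Hence the tiles of $\widehat T_n$ refining $t$ are exactly $\{f_{\ot|k}\circ f_\omega\circ f_e(A_{e^+}): e\in E_{\omega^+}\}$, i.e., the image of $B_{\omega^+}$ under the similarity $f_{\ot|k}\circ f_\omega$ of scaling ratio $s^{d(\omega)-d(\ot|k)}=s^{-n}$; this patch is therefore congruent to $s^{-n}B_{\omega^+}$, a basic $\omega^+$-subdivision in the sense of Definition~\ref{def:bs}. The main obstacle in the whole argument is exactly this last point: one must rule out any longer extensions of $\omega$ lying in $\widehat S_{(n,k)}$, and what makes the dichotomy $m=0$ vs.\ $m\geq 1$ so clean is precisely the integrality of the exponents $d(e)$.
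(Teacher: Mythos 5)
Your proposal is correct and follows essentially the same route as the paper: part (1) by setting the offset to zero, part (2) via the Proposition~\ref{prop:balanced}-style maximal-extension argument plus Lemma~\ref{lem:hier}, part (3) from the identity $S_{(n,k)}=\widehat S_{(d(\ot|n),k)}$ (which is exactly what the paper's ``follows directly from the definitions'' amounts to), and part (4) by the same integrality dichotomy $d(\sigma)-d(\ot|k)+n\geq 1$ versus $=0$ that the paper uses. You simply supply more of the routine details (e.g., ruling out longer extensions in the $m=0$ case and identifying the scaling ratio $s^{-n}$) that the paper leaves implicit.
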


\begin{proof}
Statement (1) follows from the fact that $S_{(0,k)} = \widehat{S}_{(0,k)}$.
Statement (2) follows from Lemma~\ref{lem:hier}. Statement (3) follows
directly from the definitions.

Concerning statement (4), if tile $t = t({\overleftarrow{\theta}%
},\widehat{\mathcal{S}}_{n+1}, k, \sigma) \in\widehat{T}_{n+1}$ is not a tile
in $\widehat{T}_{n}$, then $0 < d (\sigma)-d (\overleftarrow{\theta}|k) +(n+1)
\leq d (\sigma_{*})$, but $0 \geq d(\sigma)-d (\overleftarrow{\theta}|k) + n$.
Therefore $d(\sigma)-d (\overleftarrow{\theta}|k) + n = 0$, which implies that
$\sigma\, e \in\widehat{S}_{(n,k)}$ for all $e \in E_{\sigma^{+}}$. Hence
$t({\overleftarrow{\theta}},\widehat{\mathcal{S}}_{n},k, \sigma\, e)
\in\widehat{T}_{n}$ for all $e \in E_{\sigma^{+}}$ and 
\[t = \bigcup_{e \in E_{\sigma^+}} \, t({\overleftarrow{\theta}},\widehat{\mathcal{S}}_n,k, \sigma\, e) = f_{\ot|k}\circ f_{\sigma}  \circ f_{e}(A_{e+}),\]
 which, by definition, is a basic subdivision.
\end{proof}

\subsection{The Tiling Map}

\label{sec:PT}

Given a similarity GIFS, denote by $\mathbb{T}$ the set of all balanced GIFS
tilings. Define a metric on $\mathbb{T}$ as follows. Let $\rho: {\mathbb{R}%
}^{d} \rightarrow\mathbb{S}^{d}$ be the usual stereographic projection onto
the $d$-sphere, obtained by positioning $\mathbb{S}^{d}$ at the origin. Let
$\mathbb{H}(\mathbb{S}^{d})$ be the set of nonempty closed subsets of
$\mathbb{S}^{d}$ and $d_{\mathbb{H}(\mathbb{S}^{d})}$ the corresponding
Hausdorff distance with respects to the round metric on $\mathbb{S}^{d}$. Let
$\mathbb{H}(\mathbb{H}(\mathbb{S}^{d}))$ be the collection of nonempty
compact subsets of the compact metric space $(\mathbb{H}(\mathbb{S}^{d}),
d_{\mathbb{H}(\mathbb{S}^{d})})$. For tilings $T, T^{\prime}$ define
\[
d^{\prime}(T,T^{\prime}) = d_{\mathbb{H }(\mathbb{H}(\mathbb{S}^{d}))}%
(\rho(T),\rho(T^{\prime})).
\]

Define the map $W:{\mathbb{T}}\rightarrow{\mathbb{T}}$ on the tiling space by
\[
W(T(\overleftarrow{\theta}))=f_{\theta_{1}}(T_{1}(\overleftarrow{\theta})),
\]
where $T_{1}$ is the level $1$ tiling in the hierarchy $\mathcal{T}$. In other
words, $W(T(\overleftarrow{\theta}))$ is, up to the similarity transformation
$f_{\theta_{1}}$, the tiling at level $1$ in the hierarchy of the tiling
$T(\overleftarrow{\theta},\mathcal{S})$. 

For any $\overleftarrow{\theta}= \overleftarrow{\theta}_{1} \,
\overleftarrow{\theta}_{2} \, \overleftarrow{\theta}_{3} \cdots\in
\overleftarrow{\Sigma}^{\infty}$, define the \textbf{shift map} $w :
\overleftarrow{\Sigma}^{\infty} \rightarrow\overleftarrow{\Sigma}^{\infty}$ on
the parameter space by $w(\overleftarrow{\theta}) = \overleftarrow{\theta}_{2}
\, \overleftarrow{\theta}_{3} \cdots$.

Call the map $T:\overleftarrow{\Sigma}^{\infty}\rightarrow\mathbb{T}$ from the
parameter space to the tiling space that takes $\overleftarrow{\theta}$ to
$T(\overleftarrow{\theta})$ the \textbf{tiling map}. The theorem below states
that, via the tiling map, the shift map on the paramater space corresponds to
the map $W$ on the tiling space.

\begin{theorem}
\label{thm:comm} The tiling map $T$ is continuous and the following diagram
commutes.
\[%
\begin{array}
[c]{ccccc}
&  & w &  & \\
& \overleftarrow{\Sigma}^{\infty} & \rightarrow & \overleftarrow{\Sigma
}^{\infty} & \\
T & \Big \downarrow &  & \Big \downarrow & T\\
& {\mathbb{T}} & \rightarrow & {\mathbb{T}} & \\
&  & W &  &
\end{array}
\]

\end{theorem}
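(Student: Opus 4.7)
The plan is to prove the diagram commutes by a direct unwinding of the balanced sequences, and then to deduce continuity from the observation that the first $k$ balanced pre-trees depend only on the first $k$ letters of the parameter.

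For commutativity, set $\ot' := w(\ot)$, so $\theta'_j = \theta_{j+1}$. First I would record three elementary identities read directly from the definitions: $v'_j = v_{j+1}$, $d(\ot'|j) = d(\ot|{j+1}) - d(\theta_1)$, and $f_{\theta_1}\circ f_{\ot|{j+1}} = f_{\ot'|j}$ (the last using $f_{\overleftarrow{\theta_1}} = f_{\theta_1}^{-1}$). Substituting the first two into~\eqref{eq:4} yields the crucial matching $S'_j = S_{(1,j+1)}$ for all $j \geq 0$, where $(S'_j)$ is the balanced $\ot'$-sequence. A small separate check shows $S_{(1,0)} = \emptyset$, because $d(\sigma)\geq d(\sigma_*)$ and $d(\theta_1)\geq 1$ together violate the inequality $d(\sigma) + d(\theta_1) \leq d(\sigma_*)$. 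Combining these with~\eqref{eq:tiling} and shifting the union index from $k$ to $j = k-1$ gives $f_{\theta_1}(T_1(\ot)) = T(w(\ot))$, which by the definition of $W$ is the desired identity $W(T(\ot)) = T(w(\ot))$.

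For continuity, suppose $\ot$ and $\ot''$ agree in their first $k$ letters. Then $v_j$ and $d(\ot|j)$ agree for $j \leq k$, so~\eqref{eq:4} forces the balanced pre-trees $S_0,\dots,S_k$ to be identical in the two parameters; since $f_{\ot|j}$ also depends only on those letters, the patches $T(\ot,\mathcal{S},j) = f_{\ot|j}(W(S_j))$ coincide with $T(\ot'',\mathcal{S}'',j)$ for all $j \leq k$. Their common support is $f_{\ot|k}(A_{v_k})$, and the filling hypothesis guarantees that for every $R > 0$ there exists $k_0 = k_0(\ot)$ with $B(0,R) \subset f_{\ot|k_0}(A_{v_{k_0}})$. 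Hence any $\ot''$ with $d(\ot,\ot'') \leq 2^{-k_0}$ yields a balanced tiling that shares every tile meeting $B(0,R)$ with $T(\ot)$; since the stereographic metric $d'$ is designed precisely so that disagreement confined outside a ball of radius $R$ contributes an error vanishing as $R \to \infty$ (the $\rho$-images of far-away tiles crowd near the north pole of $\mathbb{S}^d$), continuity at $\ot$ follows.

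The main obstacle is the commutativity bookkeeping — verifying $S'_j = S_{(1,j+1)}$ and isolating the boundary case $S_{(1,0)} = \emptyset$, so that the shifted union reindexes cleanly into the balanced tiling of $w(\ot)$. Continuity is essentially built into the choice of metric on $\mathbb{T}$, and reduces to the observation that the level-$k$ patch is determined by the initial $k$ letters and eventually engulfs any compactum.
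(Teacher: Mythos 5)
Your proposal is correct and follows essentially the same route as the paper: the commutativity argument is exactly the paper's computation that $f_{\theta_1}^{-1}\big(T(w(\overleftarrow{\theta}))\big)=T_1(\overleftarrow{\theta})$, which is your identity $S'_j=S_{(1,j+1)}$ (your explicit check that $S_{(1,0)}=\emptyset$ corresponds to the paper's remark that the first terms of $\mathcal{S}_n$ may be empty), and the continuity argument is likewise the paper's: agreement of the first $k$ letters forces equality of the level-$k$ patches, which by the filling hypothesis eventually engulf any ball, giving closeness in the stereographic tiling metric. Your write-up is simply a more detailed bookkeeping of the same proof.
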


\begin{proof}
Given an positive real number $R$ and two parameters ${\overleftarrow{\theta}%
}$ and $\overleftarrow{\theta^{\prime}}$, there is a $k$ such that both
patches $T({\overleftarrow{\theta}},k)$ and $T(\overleftarrow{\theta^{\prime}%
},k)$ contain the ball of radius $R$ centered at the origin. If
${\overleftarrow{\theta}}|k=\overleftarrow{\theta^{\prime}}|k$, i.e., if
${\overleftarrow{\theta}}$ and $\overleftarrow{\theta^{\prime}}$ are
sufficiently close in the parameter space metric, then
$T({\overleftarrow{\theta}},k)=T(\overleftarrow{\theta^{\prime}},k)$, which
guarantees that the $T({\overleftarrow{\theta}})$ and $T(\overleftarrow{\theta
^{\prime}})$ are close in the tiling space metric.

Concerning the commuting diagram we have
\[
\begin{aligned}
{f_{\theta_1}}^{-1} (T(w(\oo\theta)) &= \Big \{ f_{\oo\theta|k}\circ f_{\sigma} : \sigma = \sigma_1 \cdots \sigma_j, \;
0 <  d(\sigma) -  d(\theta|k) +  d(\theta_1) \leq d(\sigma_j) \Big \} \\ &=T_1(\oo\theta).\end{aligned}
\]
Therefore $T(w(\overleftarrow{\theta})) = W(T(\overleftarrow{\theta}))$.
\end{proof}

\section{Properties of Balanced Tilings}

\label{sec:properties}

\subsection{When is a balanced tiling self-similar?}

\begin{definition}
For a closed path $\alpha$ in a directed graph, let $\overline{\alpha} :=
\alpha\alpha\cdots$. A path $\theta\in\Sigma^{\infty}$ is \textit{periodic} if
there is an $\alpha\in\Sigma^{*}$ such that $\theta= \overline{\alpha}$ and
\textit{eventually periodic} if there are $\alpha, \beta\in\Sigma^{*}$ such
that $\theta=\beta\overline{\alpha}$.
\end{definition}

\begin{theorem}
\label{thm:SS} If $\overleftarrow{\theta}\in\overleftarrow{\Sigma}^{\infty}$
is eventually periodic, then the balanced GIFS tiling $T(\overleftarrow{\theta
})$ is self-similar.
\end{theorem}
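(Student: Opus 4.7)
My plan is to exhibit a single similarity $\phi$ with scaling ratio greater than $1$ and to show that, for every tile $t \in T(\overleftarrow{\theta})$, the image $\phi(t)$ is a finite union of tiles of $T(\overleftarrow{\theta})$. Writing $\overleftarrow{\theta}=\beta\overline{\alpha}$ with $\beta \in \overleftarrow{\Sigma}^{*}$ of length $m$ and $\alpha \in \overleftarrow{\Sigma}^{*}$ of length $p$ (where $m=0$ is the purely periodic case), I will take
\[ \phi := f_{\beta}\circ f_{\alpha}\circ f_{\beta}^{-1}, \]
the conjugate of the expansion $f_{\alpha}$ by $f_{\beta}$; this is a similarity of $\mathbb{R}^{d}$ with scaling ratio $s^{-d(\alpha)}>1$.

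A direct comparison of definitions (matching conditions on $\sigma$ under the reindexing $k\mapsto k-n$) yields the general identity $T_{n}(\overleftarrow{\theta}) = f_{\overleftarrow{\theta}|n}\bigl(T(w^{n}\overleftarrow{\theta})\bigr)$ for all $n\geq 0$, and I would record this as a preliminary lemma. In the purely periodic case, $w^{p}\overleftarrow{\theta}=\overleftarrow{\theta}$ makes this specialize to $\phi(T(\overleftarrow{\theta})) = T_{p}(\overleftarrow{\theta})$, so self-similarity is immediate from Proposition~\ref{prop:hier}. The eventually periodic case follows the same idea but must be carried out one tile at a time.

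Given an arbitrary $t \in T(\overleftarrow{\theta})$, the nestedness of the patches $T(\overleftarrow{\theta},\mathcal{S},k)$ lets me represent $t = f_{\overleftarrow{\theta}|k}\circ f_{\sigma}(A_{\sigma^{+}})$ with $\sigma\in S_{k}$ for any sufficiently large $k$, and I will choose $k\geq m$. The key computational identity
\[ f_{\alpha}\circ f_{(w^{m}\overleftarrow{\theta})|(k-m)} = f_{(w^{m}\overleftarrow{\theta})|(k-m+p)}, \]
forced by the period-$p$ structure of $w^{m}\overleftarrow{\theta}$, together with the factorizations $f_{\overleftarrow{\theta}|k}=f_{\beta}\circ f_{(w^{m}\overleftarrow{\theta})|(k-m)}$ and $f_{\overleftarrow{\theta}|(k+p)}=f_{\beta}\circ f_{(w^{m}\overleftarrow{\theta})|(k-m+p)}$ valid for $k\geq m$, then telescopes to
\[ \phi(t) = f_{\overleftarrow{\theta}|(k+p)}\circ f_{\sigma}(A_{\sigma^{+}}). \]

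It remains to exhibit $\phi(t)$ as a union of tiles of $T(\overleftarrow{\theta})$. I will set $S' := \{\tau \in \Sigma^{*} : \sigma\tau \in S_{k+p}\}$ and verify that $S'$ is a pre-tree rooted at $\sigma^{+}$: conditions (1) and (2) are immediate, while (3) follows from the corresponding property of $S_{k+p}$ using the vertex identification $\theta_{k+p}^{-}=\theta_{k}^{-}$ for $k\geq m$, which is precisely where the eventual periodicity of $\overleftarrow{\theta}$ enters. Applying Lemma~\ref{lem:overlap} to $S'$ and pushing the resulting tiling of $A_{\sigma^{+}}$ forward by $f_{\overleftarrow{\theta}|(k+p)}\circ f_{\sigma}$ gives
\[ \phi(t) = \bigsqcup_{\tau \in S'} f_{\overleftarrow{\theta}|(k+p)}\circ f_{\sigma\tau}(A_{\tau^{+}}), \]
each term being a tile in $T(\overleftarrow{\theta},\mathcal{S},k+p)$. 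The main obstacle will be checking the pre-tree axioms for $S'$; this is the single place where eventual periodicity does the real work, both in justifying the choice $k\geq m$ and in supplying the vertex-matching $\theta_{k+p}^{-}=\theta_{k}^{-}$ that makes $S_{k+p}$ available for the comparison.
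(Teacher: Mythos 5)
Your argument is correct and is essentially the paper's own proof: the same conjugated similarity $\phi=f_{\beta}\circ f_{\alpha}\circ f_{\beta}^{-1}$, the same computation giving $\phi(t)=f_{\overleftarrow{\theta}|(k+p)}\circ f_{\sigma}(A_{\sigma^{+}})$, and the same application of Lemma~\ref{lem:overlap} to the pre-tree of continuations of $\sigma$, which the paper describes directly by its balanced inequalities and you describe as $S'=\{\tau\in\Sigma^{*}:\sigma\tau\in S_{k+p}\}$ (the same set). When you verify the pre-tree axioms for $S'$, also record that $S'\neq\emptyset$: either $\sigma\in S_{k+p}$, in which case $\phi(t)$ is itself a tile, or $d(\sigma)\leq d(\overleftarrow{\theta}|(k+p))$ and one extends $\sigma$ edge by edge until its $d$-value first exceeds $d(\overleftarrow{\theta}|(k+p))$, exactly as in the proof of Proposition~\ref{prop:balanced}.
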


\begin{proof}
Let ${\overleftarrow{\theta}} = \beta\, \overline\alpha$, where $\alpha,
\beta\in\overleftarrow{\Sigma}^{*}$. We may assume, without loss of
generality, that $k$ is sufficiently large so that ${\overleftarrow{\theta}}|k
= \beta\,\alpha^{q} \,\gamma$ for some $q \in\mathbb{N}$ and some $\gamma
\in\overleftarrow{\Sigma}^{*}$. Since $\alpha$ is a closed path, the end
vertex of $\beta\,\alpha^{q} \,\gamma$ is independent of the integer $q$. Let
$t = f_{{\overleftarrow{\theta}}|k}\circ f_{\sigma}(A_{\sigma^{+}}) \in
T({\overleftarrow{\theta}})$, where $0 < d(\sigma) - d( \beta\,\alpha^{q}
\,\gamma) \leq d(\sigma_{*}).$ It is sufficient to find a similarity
transformation $\phi$ such that $\phi(t)$ is tiled by tiles in
$T({\overleftarrow{\theta}})$.

As in the proof of Proposition~\ref{prop:balanced}, the set of paths
\[
S = \{ \omega\in\Sigma^{} \, : \, \omega^{-} = \sigma^{+} \; \text{and} \; 0 <
d(\omega) - d(\alpha) + d(\sigma) - d(\beta\,\alpha^{q} \,\gamma) \leq
d(\omega_{*}) \}
\]
is a pre-tree. Therefore, by Lemma~\ref{lem:overlap},
\[
A_{\sigma^{+}} = \bigcup\Big \{ f_{\omega}(A_{\omega^{+}}) : 0 <
d(\sigma\omega) - d(\beta\alpha^{q+1}\gamma) \leq d (\omega_{*})\Big \}.
\]
Let $\phi= f_{\beta}\circ f_{\alpha^{m}}\circ{f_{\beta}}^{-1}$, where $m$ is
the least positive integer such that $d(\beta\alpha^{m} \gamma) \geq
d(\sigma)$. Then $S \neq\emptyset$, and we have
\[
\begin{aligned} \phi(t) &= f_{\beta}\circ f_{\alpha}\circ {f_{\beta}}^{-1}
\circ f_{\ot|k}\circ f_{\sigma}(A_{\sigma^+}) =
f_{\beta}\circ  (f_{\alpha})^{q +1} \circ f_{\gamma} \circ f_{\sigma}(A_{\sigma^+})
\\ &= \bigcup \Big \{ (f_{\beta}\circ  (f_{\alpha})^{q +1} \circ f_{\gamma}) \circ f_{\sigma} \circ f_{\omega}(A_{\omega^+})
:  0 <  d(\sigma \omega) - d(\beta \alpha^{q+1}\gamma) \leq d (\omega_*)\Big \}
\\ &= \bigcup \Big \{ (f_{\beta}\circ  (f_{\alpha})^{q +1} \circ f_{\gamma}) \circ f_{\sigma\omega}
(A_{(\sigma \omega)^+})
: 0 < d(\sigma \omega)  - d(\beta \alpha^{q+1} \gamma)  \leq d ((\sigma \omega)_*)\Big \}.\end{aligned}
\]
But, by definition, the sets in the above union are tiles in
$T({\overleftarrow{\theta}})$.
\end{proof}

\subsection{When is a balanced tiling repetitive?}

 \begin{definition} \label{def:t}  Let $(G, F)$ be a GIFS with directed graph $G=(V,E)$.
We say that $(G,F)$ satisfies the {\bf triangle property} if there exists 
an integer $N$ such that, for all $u,v,w\in V$,
if $\sigma(u,v)$ is a path from $u$ to $v$ and $\sigma(u,w)$ is a path from $u$ to $w$ such that 
$d(\sigma(u,v)) - d(\sigma(u,w)) \geq N$, then there is a path $\sigma(w,v)$ from $w$ to $v$ such that
\[d(\sigma(u,v)) = d(\sigma(u,w)) + d(\sigma(w,v)) .\]

We say that $(G,F)$ is {\bf coprime} if there is a vertex $x$ in $G$ and closed paths $\omega_1, \omega_2, \cdots, \omega_m, \, m \geq 2,$ containing $x$ such that 
$\gcd \, (d(\omega_1), d(\omega_2), \cdots, d(\omega_m) )= 1$.
\end{definition}
 
\begin{lemma} \label{lem:tc}  If $(G,F)$ is coprime, then $(G,F)$ satisfies the triangle property.  
\end{lemma}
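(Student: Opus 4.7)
The plan is to exploit strong connectivity of $G$ together with the Sylvester--Frobenius (``Chicken McNugget'') theorem applied to the positive integers $d(\omega_1), \dots, d(\omega_m)$. Since $V$ is finite and $G$ is strongly connected, I first fix, for every ordered pair of vertices $(a,b)$, some path $\pi_{a,b}$ from $a$ to $b$, and set $C := \max_{a,b} d(\pi_{a,b})$, which is finite. Since the positive integers $d(\omega_1),\dots,d(\omega_m)$ are coprime, the Sylvester--Frobenius theorem supplies an integer $g$ such that every integer $k \geq g$ can be written as a non-negative integer combination $k = \sum_i c_i\, d(\omega_i)$ with $c_i \in \{0,1,2,\dots\}$. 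I will then show that the triangle property holds with the uniform constant $N := 2C + g$.

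Given $u,v,w \in V$ and paths $\sigma(u,v), \sigma(u,w)$ with $D := d(\sigma(u,v)) - d(\sigma(u,w)) \geq N$, I build the required path from $w$ to $v$ by routing through $x$:
\[
\sigma(w,v) \; := \; \pi_{w,x} \; \omega_1^{c_1}\, \omega_2^{c_2} \cdots \omega_m^{c_m} \; \pi_{x,v},
\]
where the exponents $c_i \geq 0$ are chosen so that $\sum_i c_i\, d(\omega_i) = D - d(\pi_{w,x}) - d(\pi_{x,v})$. Such $c_i$ exist because $D - d(\pi_{w,x}) - d(\pi_{x,v}) \geq D - 2C \geq g$. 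The concatenation is a well-defined directed path from $w$ to $v$, since each $\omega_i$ is a closed path based at $x$, and by construction
\[
d(\sigma(w,v)) \;=\; d(\pi_{w,x}) \,+\, \sum_i c_i\, d(\omega_i) \,+\, d(\pi_{x,v}) \;=\; D,
\]
which is exactly the equation $d(\sigma(u,v)) = d(\sigma(u,w)) + d(\sigma(w,v))$ demanded by the triangle property.

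Notice that the vertex $u$ and the specific paths $\sigma(u,v), \sigma(u,w)$ enter only through the scalar $D$; nothing about the construction depends on them otherwise. The delicate point, and the only real thing to verify, is that a single $N$ works simultaneously for all triples $(u,v,w)$. This reduces to two finiteness observations: $C$ is finite because $V$ is finite, and the Frobenius threshold $g$ is an invariant of the list $d(\omega_1),\dots,d(\omega_m)$ alone. With these in hand the construction is essentially forced, so I expect no genuine obstacle beyond correctly packaging the Sylvester--Frobenius step and checking that all bounds are independent of $(u,v,w)$.
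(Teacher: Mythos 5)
Your proposal is correct and follows essentially the same route as the paper's proof: route from $w$ to $v$ through the coprime vertex $x$, use strong connectivity to bound the $d$-values of the connecting paths, and use the Sylvester--Frobenius fact to pad with the closed paths $\omega_i$ so the total $d$-value equals exactly the required difference. (In fact your bookkeeping with $D$ is slightly cleaner than the paper's, which writes $N$ where the actual difference $N_0$ is meant.)
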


\begin{proof} Assume $(G,F)$ is coprime.  By a standard number theoretic result, there is an integer $N_1$ such that every integer greater than or equal to $N_1$ can
be expressed as $\sum_{i=1}^m a_i d(\omega_i)$ for some non-negative integers $a_i$.     Since $G$ is strongly connected, every two vertices are connected by a finite path.  Let $N_2$ be
an integer such that every two vertices are joined by a path whose $d$-value is at most $N_2$.  To show that $(G,F)$ satisfies the triangle property,  consider any three vertices
 $u,v,w\in V$, and let $N = N_1 + 2 N_2$.  Assume that $N_0 := d(\sigma(u,v)) - d(\sigma(u,w)) \geq N$. The path  $\sigma(w,v)$ required in the definition of the triangle property
is obtained as follows.   If $x$ is the vertex in the definition of coprime GIFS, let  $\sigma(w,v)$ be a path from $w$ to $v$ that is the concatenation of the three paths $\sigma(w,x)$,
$\sigma(x,x)$, and $\sigma(x,v)$, where $\sigma(w,x)$ and $\sigma(x,v)$ are any paths from $w$ to $x$ and from $x$ to $v$, respectively, such that $q_1 := d(\sigma(w,x))  \leq
 N_2, \, q_2 := d(\sigma(x,v))  \leq N_2$.  Let $\sigma(x,x)$ be obtained by, starting and ending at $x$, traversing the closed paths $\omega_1, \omega_2, \cdots, \omega_m$ 
(as defined in the definition of coprime) sufficiently many time so that $d(\sigma(x,x)) = N - q_1 - q_2$.  This is possible because   $N - q_1 - q_2\geq  N - 2 N_2 \geq N_1$.
Now $d(w,v) = q_1 +(N-q_1-q_2) + q_2 = N$.
\end{proof}

\begin{remark}If the digraph has just one vertex (the IFS case), then the GIFS is coprime if and only if $g := \gcd \{d(e), e\in E\}  = 1$, which can always, without loss of generality,  be assumed
by replacing the scaling constant $s$ by $s^{g}$.
\end{remark}

\begin{theorem}
\label{thm:r} If a similarity GIFS satisfies the triangle property, then the
balanced tiling $T(\overleftarrow{\theta})$ is repetitive for all parameters
$\theta$.
\end{theorem}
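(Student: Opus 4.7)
The plan is to show that every ball of sufficiently large radius contains an isometric copy of $P$, by reproducing, inside that ball, a level-$n$ super-tile containing $P$; the key tool is the triangle property, which supplies paths in $G$ of prescribed length between prescribed vertices.

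First I would pick $n$ large and a level-$n$ tile $\tilde{t}^{*}\in \widehat T_n$ with $P\subset \tilde{t}^{*}$ (such a tile exists because super-tiles in the hierarchy $\widehat{\mathcal T}$ grow without bound), write its address as $k\bullet\sigma^{*}$, and introduce its ``type'' $(M^{*},i^{*})$, where $M^{*}:=d(\ot|k)-d(\sigma^{*})$ and $i^{*}:=(\sigma^{*})^{+}$. Let $e^{*}:=\sigma^{*}_{*}$ and $d^{*}:=d(e^{*})$; the defining condition of $\widehat S_{(n,k)}$ forces $d^{*}\geq n-M^{*}$. A key observation is that the level-0 subdivision of \emph{any} level-$n$ tile of type $(M^{*},i^{*})$ is determined entirely by the type, since extensions $\gamma$ from $i^{*}$ lie in the level-0 pre-tree iff $M^{*}<d(\gamma)\leq M^{*}+d(\gamma_{*})$. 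Hence any isometry between two level-$n$ tiles of the same type automatically carries one level-0 subdivision onto the other, so it suffices to produce an isometric copy of $\tilde{t}^{*}$ in every sufficiently large ball.

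Next fix $n'\geq n+2d_{\max}+N$, where $N$ is the triangle-property constant and $d_{\max}:=\max\{d(e):e\in E\}$, and set $R:=2 D_{n'}$ with $D_{n'}$ the maximum diameter of a level-$n'$ tile. Given any $x\in \R^d$, pick a level-$n'$ tile $\hat{t}$ with $x\in\hat{t}\subset B(x,R)$; after re-addressing via the identity $f_{\ot|k'}\circ f_{\sigma^{(n')}}=f_{\ot|(k'+1)}\circ f_{\theta_{k'+1}\sigma^{(n')}}$, we may assume its address is $k'\bullet \sigma^{(n')}$ with $k'\geq k$ and type $(M',j)$. Apply the triangle property with $u:=v_{k'}$, $w:=j$, $v:=(e^{*})^{-}$, $\sigma(u,w):=\sigma^{(n')}$ (length $d(\ot|k')-M'$), and $\sigma(u,v):=\theta_{k'}\theta_{k'-1}\cdots\theta_{k+1}\sigma^{*}_{\mathrm{pre}}$, where $\sigma^{*}_{\mathrm{pre}}$ is $\sigma^{*}$ with its last edge removed; this is a genuine path in $G$ from $v_{k'}$ to $(e^{*})^{-}$ of length $d(\ot|k')-M^{*}-d^{*}$. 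The length difference is $M'-M^{*}-d^{*}\geq N$ by the choice of $n'$, so the triangle property yields a path $\beta$ from $j$ to $(e^{*})^{-}$ of that length, and setting $\alpha:=\beta e^{*}$ produces a path from $j$ to $i^{*}$ of length $M'-M^{*}$ with terminal edge $e^{*}$.

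A direct verification gives $\sigma^{(n')}\alpha\in \widehat S_{(n,k')}$, since $d(\sigma^{(n')}\alpha)-d(\ot|k')+n=n-M^{*}\in(0,d^{*}]$. Thus the level-$n$ tile $f_{\ot|k'}\circ f_{\sigma^{(n')}\alpha}(A_{i^{*}})$ lies inside $\hat{t}\subset B(x,R)$ and has type $(M^{*},i^{*})$, so it is isometric to $\tilde{t}^{*}$; by the type-determines-subdivision observation, this isometry carries the level-0 subdivision of $\tilde{t}^{*}$, and in particular $P$, onto an isometric patch in $B(x,R)$, proving repetitivity. The principal obstacle is arranging $\alpha$ to end with precisely the edge $e^{*}$ so that the pre-tree membership condition holds; the ``prefix trick'' of invoking the triangle property to build $\alpha$ minus its last edge and then appending $e^{*}$ is what resolves this.
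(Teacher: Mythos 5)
Your argument follows essentially the same strategy as the paper's proof: both use the triangle property to manufacture a connecting path whose $d$-value exactly fills the gap between the address data of a large supertile and that of a fixed patch containing $P$, thereby planting a congruent copy of that patch (made of genuine tiles of $T(\overleftarrow{\theta})$) inside every sufficiently high-level supertile, and then conclude because every ball of a fixed radius contains such a supertile. The differences are packaging: the paper reproduces the central patch $T(\overleftarrow{\theta},k)$ inside every tile of the level-$n$ hierarchy $\mathcal{T}$, which makes the last-edge condition automatic, whereas you reproduce a level-$n$ tile of $\widehat{\mathcal{T}}$ of the same ``type'' together with its level-$0$ subdivision, which is why you need the prefix trick of first reaching $(e^*)^-$ and then appending $e^*$ to satisfy the last-edge constraint in $\widehat S_{(n,k')}$; your $d$-value bookkeeping there is correct.

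Two points need repair, though neither is fatal. First, your opening step --- ``$P\subset\tilde t^{*}$ for some level-$n$ tile because supertiles grow without bound'' --- is not a valid inference: in a general hierarchical tiling a finite patch can straddle supertile boundaries at every level. The claim is true here, but for a different reason, which is exactly how the paper sidesteps the issue: every finite patch lies in some central patch $T(\overleftarrow{\theta},k)$ (since $T(\overleftarrow{\theta})=\bigcup_k T(\overleftarrow{\theta},k)$ and these patches are nested), and the support $f_{\overleftarrow{\theta}|k}(A_{v_k})=f_{\overleftarrow{\theta}|(k+1)}\circ f_{\theta_{k+1}}(A_{v_k})$ of that patch is itself a tile of $\widehat T_{n}$ with $n=d(\overleftarrow{\theta}|k)+1$, because $\theta_{k+1}\in\widehat S_{(n,k+1)}$ for that $n$; with this observation your choice of $\tilde t^{*}$ is legitimate. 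Second, ``any isometry between two level-$n$ tiles of the same type carries one level-$0$ subdivision onto the other'' is an overclaim (a tile with symmetries could be matched by an isometry that ignores the subdivision); what you need, and what your construction actually supplies, is that the canonical map $f_{\overleftarrow{\theta}|k'}\circ f_{\sigma^{(n')}\alpha}\circ\big(f_{\overleftarrow{\theta}|k}\circ f_{\sigma^{*}}\big)^{-1}$ is a similarity of ratio $s^{-M^{*}}\cdot s^{M^{*}}=1$ that visibly matches the two subdivisions (both are indexed by the same set of extensions $\gamma$ from $i^{*}$), hence carries $P$ to a congruent patch of $T(\overleftarrow{\theta})$ inside $B(x,R)$.
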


\begin{proof}
Since, by definition, $\theta$ is assumed filling for a balanced GIFS tiling,
any patch of $T(\overleftarrow{\theta})$ is contained in
$T(\overleftarrow{\theta}, k)$ for some $k$. It then suffices to show that
there exists an integer $n$ such that a copy congruent to
$T(\overleftarrow{\theta}, k)$ is contained in every tile of $T_{n}$ in the
$n^{th}$-level of the hierarchy $\mathcal{T}$ defined in
Section~\ref{sec:hier}. Let $v = \theta_{k}^{-}$, i.e., the last vertex of
$\overleftarrow{\theta}|k$. Choose $K>k$ and choose level $n$ such that the
following holds for every $t = f_{\overleftarrow{\theta}|K}\circ f_{\sigma
}(A_{\sigma^{+}}) \in T_{n}$: if $u = \theta_{K}^{-}, \; x = \sigma^{+}$, and
$\sigma(u,v) = \theta_{K} \theta_{K-1} \cdots\theta_{k+1}$, then $d
(\sigma(u,v)) - d(\sigma) \geq N$, where $N$ is as in Definition~\ref{def:t}.
By the triangle property, there is a path $\sigma(x,v)$ from $x$ to $v$ such
that $d(\sigma(u,v)) = d(\sigma(u,x)) + d(\sigma(x,v)).$ Now
\[
T({\overleftarrow{\theta}},k) = \{ f_{{\overleftarrow{\theta}}|k} \circ
f_{\omega} (A_{\omega^{+}}) \, : \, \omega^{-} = u, \; 0 < d(\omega) -
d({\overleftarrow{\theta}}|k) \leq d(\omega_{*})\}.
\]
If
\[
T = \{ f_{{\overleftarrow{\theta}}|K} \circ f_{\sigma(u,x) \sigma(x,v) \omega}
(A_{\omega^{+}}) \, : \, \omega^{-} = u,\; 0 < d(\omega) -
d({\overleftarrow{\theta}}|k) \leq d(\omega_{*})\},
\]
then $T({\overleftarrow{\theta}},k)$ and $T$ are isometric because
\[
\begin{aligned} d(\ot|k \,\omega) &= - d(\ot|k) + d(\omega) \\
&=- d(\ot|k) - d(\sigma(u,v) )+d(\sigma(u,x))
+d(\sigma(x,v) )+ d(\omega) \\&=  - d(\ot|K) +d(\sigma(u,x) \sigma(x,v) \omega).\end{aligned}
\]
But each tile in $T$ is contained in $t \in T_{n}$ because $f_{\sigma(x,v)
\omega} (A_{\omega^{+}}) \subset A_{\sigma^{+}}$ implies that
$f_{{\overleftarrow{\theta}}|K} \circ f_{\sigma(u,x)} \circ f_{\sigma(x,v)
\omega} (A_{\omega^{+}}) \subset f_{{\overleftarrow{\theta}}|K} \circ
f_{\sigma}(A_{\sigma^{+}})$.
\end{proof}

The following corollary follows immediately from Theorem~\ref{thm:r} and Lemma~\ref{lem:tc}.

\begin{cor}  If a similarity GIFS is coprime, then the balanced tiling $T(\oo \theta)$ is repetitive
for all parameters $\theta$. 
\end{cor}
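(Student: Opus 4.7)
The plan is to compose the two immediately preceding results in sequence, treating the corollary as a straightforward concatenation. Assume the similarity GIFS $(G,F)$ is coprime in the sense of Definition~\ref{def:t}. The first step is to invoke Lemma~\ref{lem:tc}, which directly converts the coprimality hypothesis into the triangle property: there exists an integer $N$ such that for any triple of vertices $u,v,w$ and any paths $\sigma(u,v)$, $\sigma(u,w)$ with $d(\sigma(u,v)) - d(\sigma(u,w)) \geq N$, a connecting path $\sigma(w,v)$ exists with the additive $d$-relation.

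The second step is to feed this triangle property into Theorem~\ref{thm:r}. That theorem's hypothesis is exactly that the similarity GIFS satisfies the triangle property, and its conclusion is that for every parameter $\overleftarrow{\theta}$ the balanced tiling $T(\overleftarrow{\theta})$ is repetitive. Combining these, we conclude that every balanced tiling arising from a coprime similarity GIFS is repetitive, which is precisely the statement of the corollary.

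There is essentially no obstacle here, as both results are already in hand: Lemma~\ref{lem:tc} supplies the arithmetic input (the Chicken McNugget / Frobenius-type fact that sufficiently large integers lie in the numerical semigroup generated by cycle lengths $d(\omega_i)$ when $\gcd = 1$, padded by the uniform bound $N_2$ on inter-vertex distances), and Theorem~\ref{thm:r} supplies the geometric consequence. The proof of the corollary itself therefore reduces to a single sentence citing the two results in the proper order, and no additional arguments or calculations are required.
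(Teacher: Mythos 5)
Your proposal is correct and matches the paper exactly: the paper states that the corollary follows immediately from Lemma~\ref{lem:tc} (coprime implies the triangle property) combined with Theorem~\ref{thm:r} (triangle property implies repetitivity of every balanced tiling). Nothing further is needed.
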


\subsection{When is a balanced tiling non-periodic?}

\begin{definition}
\label{def:rigid} For a basic subdivison $B$, let $\cup B$ denote the union of the tiles in $B$.  A similarity GIFS is called \textbf{rigid} if the following
property holds for any basic $i$-subdivision $B$ and any $i'$-subdivision $B'$: if the intersection $B \cap B'$ is a nonempty set that tiles $(\cup B) \cap (\cup B')$, then $i=i'$ and $B =B'$.
\end{definition}

In Example~\ref{ex:squares} of Section~\ref{sec:examples}, there is a basic
subdivision $B$ into four unit squares. Clearly two congruent copies (red and
blue) of $B$ can intersect in a single square as in Figure~\ref{fig:square},
implying that the GIFS of Example~\ref{ex:squares} is not rigid.

\begin{figure}[htb]
\vskip -6mm
\includegraphics[width=5cm, keepaspectratio]{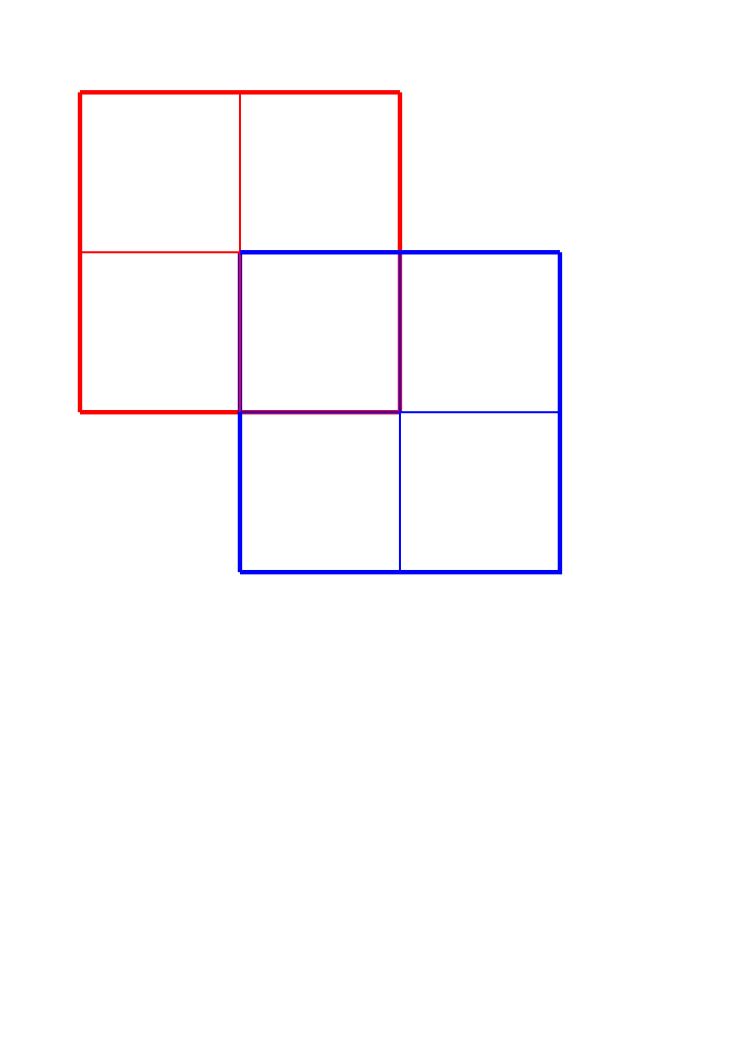}
\vskip -3cm
\caption{Showing non-rigidity of the GIFS in Example~\ref{ex:squares}.}
\label{fig:square}
\end{figure}

The following lemma states that, for a given rigid similarity GIFS, a tile preserving isometry
taking one balanced tiling onto another preserves the hierarchical structure.

\begin{lemma}
\label{lem:hier2} 
 If $(G,F)$ is rigid and if $\Phi$ is an isometry of $\R^d$ that maps $T(\oo\theta)$ to $T(\oo{\theta'})$, then $\Phi$
maps tiles in $\cup_{n=1}^{\infty} \widehat T_n(\oo \theta)$ to tiles in 
$\cup_{n=1}^{\infty} \widehat T_n(\oo{\theta '})$.
\end{lemma}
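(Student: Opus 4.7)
The plan is to prove by induction on $n \ge 0$ the stronger statement that $\Phi$ bijectively maps the tiles of $\widehat T_n(\ot)$ onto the tiles of $\widehat T_n(\ot')$; the conclusion of Lemma~\ref{lem:hier2} then follows by taking the union over $n$. The base case $n=0$ is exactly the hypothesis, since $\widehat T_0 = T(\ot)$ and $\widehat T_0 = T(\ot')$ by Proposition~\ref{prop:hier}(1).

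For the inductive step $n \to n+1$, Proposition~\ref{prop:hier}(4) says that each $\widehat T_{n+1}$ is obtained from $\widehat T_n$ by partitioning its tiles into blocks which are either singletons (tiles that persist to level $n+1$) or basic-subdivision merge groups whose union forms a new tile at level $n+1$. Using the inductive bijection $\Phi\colon \widehat T_n(\ot) \to \widehat T_n(\ot')$, the $\ot$-side partition transports to a partition $\Phi(\mathcal{P})$ of the tiles of $\widehat T_n(\ot')$ whose blocks are again singletons or basic subdivisions, because $\Phi$ is an isometry and basic subdivisions are defined up to congruence in Definition~\ref{def:bs}. The inductive step then reduces to showing that $\Phi(\mathcal{P})$ agrees with the intrinsic partition $\mathcal{P}'$ of $\widehat T_n(\ot')$ determined by $\widehat T_{n+1}(\ot')$.

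Rigidity handles the easy cases of this comparison. Fix $\tau \in \widehat T_n(\ot')$ and let $G$, $G'$ be its blocks in $\Phi(\mathcal P)$ and $\mathcal P'$, respectively. If both are basic subdivisions, they share the tile $\tau$, and since the tiles of $\widehat T_n(\ot')$ are pairwise non-overlapping, the intersection $G \cap G'$ automatically tiles $(\cup G) \cap (\cup G')$; rigidity then forces $G = G'$. If both are singletons, both equal $\{\tau\}$ and so coincide.

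The main obstacle is the mixed case, where (say) $G$ is a basic subdivision $B$ of size at least $2$ containing $\tau$ and $G' = \{\tau\}$. A second use of rigidity gives useful information: for any $\tau_1 \in B \setminus \{\tau\}$, if its $\mathcal P'$-block were a basic subdivision, rigidity with $B$ would force the two to coincide, making $B$ also the $\mathcal P'$-block of $\tau$ and contradicting $G' = \{\tau\}$. Thus every tile of $B$ must be a singleton in $\mathcal P'$, so the entire set $B$ persists unchanged into $\widehat T_{n+1}(\ot')$. To close this out I would exploit the scale-rigidity built into the proof of Proposition~\ref{prop:hier}(4): every type-(II) merge at level $n+1$ has union of the common characteristic scale $s^{-n}$, and hence $\cup B$ has this scale on the $\ot'$-side as well. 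Running the symmetric argument with $\Phi^{-1}$ on the $\ot'$-side, the tiles of $B$ on the $\ot'$-side (all singletons in $\mathcal{P}'$) should correspond under $\Phi^{-1}$ to tiles that are simultaneously in a merge group of $\mathcal{P}$ (since the merge group $B_0 = \Phi^{-1}(B)$ has the required merge-scale $s^{-n}$ on the $\ot$-side too) and singletons in the partition transported from $\mathcal{P}'$, yielding the contradiction that completes the induction. I expect this careful combination of rigidity with the merge-scale uniqueness to be the delicate part of the proof.
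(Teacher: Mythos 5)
Your strategy is genuinely different from the paper's: you aim for the stronger, level-preserving statement that $\Phi$ maps $\widehat T_n(\ot)$ bijectively onto $\widehat T_n(\oo{\theta'})$ for every $n$, whereas the paper's induction only asserts that each tile of $\widehat T_n(\ot)$ is sent to a tile of $\bigcup_k \widehat T_k(\oo{\theta'})$ at \emph{some} level, and then handles the problematic configuration by a minimal-counterexample argument (the largest level $m<n$ at which $t$ splits into a set $Q$ of at least two tiles, the least levels $M_q$ at which the images $\Phi(q)$ are properly contained in larger tiles, and rigidity applied to the two resulting basic subdivisions). That weaker induction statement is chosen precisely to avoid the case on which your argument gets stuck.

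The gap is in your mixed case. Suppose $B=\Phi(B_0)$ where $B_0$ is a merge group at step $n\to n+1$ on the $\ot$ side and every tile of $B$ is a singleton in $\mathcal{P}'$. Knowing that $\cup B$ is congruent to $s^{-n}A_i$ does \emph{not} force the tiles of $B$ to merge at step $n\to n+1$ on the $\oo{\theta'}$ side: merging is governed by the address condition $d(\sigma)-d(\oo{\theta'}|k)+n=0$ for a common parent path, not by the geometric size of the union, and congruent tiles can behave differently. In the Ammann GIFS of Example~\ref{ex:Am}, for instance, a level-$n$ tile of scale $s^{1-n}$ persists to level $n+1$ exactly when its address ends in the edge with $d=2$; two congruent tiles, one ending in $e_1$ and one in $e_2$, have opposite fates. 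Likewise, running ``the symmetric argument with $\Phi^{-1}$'' only reproduces the assumed configuration viewed from the other side (a merge group whose image consists of persisting tiles); it is not self-contradictory, so no contradiction has been derived. What actually closes this case is to ascend the $\oo{\theta'}$ hierarchy to the first level $m'\ge n+1$ at which some tile of $B$ merges (such a level exists because tile sizes in $\widehat T_{m}(\oo{\theta'})$ grow without bound), observe that at that level $B$ and the new merge group $B''$ are both basic subdivisions made of tiles of $\widehat T_{m'}(\oo{\theta'})$ sharing a tile, and apply rigidity to conclude $B=B''$, contradicting that $\cup B''$ has scale $s^{-m'}\neq s^{-n}$. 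This ascent-plus-rigidity step is essentially the paper's minimal-level bookkeeping; without it (or an equivalent argument) your induction does not close, so as written the proof has a genuine gap, though it is repairable along these lines.
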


\begin{proof} 
 It will be proved by induction on the level $n$, that every
tile in $\widehat T_n(\oo\theta)$ is mapped by $\Phi$ to a tile in $\cup_{k=1}^{\infty} \widehat T_k(\oo{\theta'})$.  By the hypothesis, this is true for
$n=0$; assume that it is true for all $m < n$.  Assume, by way of contradiction, that a tile $t \in \widehat T_n(\oo\theta)$ is not mapped to a tile in  $\cup_{k=1}^{\infty} \widehat T_k(\oo{\theta'})$. Let $m$ be the largest level less than $n$
such that $t$ is tiled by a set $Q$ of at least $2$ tiles in $\widehat T_m(\oo\theta)$.  By the induction hypothesis, the image under $\Phi$ of each tile in $Q$ is a tile in $\cup_{k=1}^{\infty} \widehat T_k(\oo{\theta'})$.  For each  tile 
$q \in Q$, let $M_q$ be the least integer for which there exists a $t'_q\in \widehat T_{M_q}(\oo{\theta'})$ such that 
$\Phi(q) \subsetneq t'_q $.   There is some $q_0  \in Q$ such that no $q \in Q$ exists such that
$t'_q \subsetneq t' := t'_{q_0}$.  Let $m'$ be the largest integer less then $M_{q_0}$ such that $t'_{q_0} \notin 
\widehat T_{m'}$.   Let $Q'$ denote the set of tiles of $T_{m'}$ that tile $t'$.  Then $\Phi(t) \cap t'$
is tiled by $\Phi(Q) \cap Q'$.  But $Q$ is a basic subdivision of $t$ and $Q'$ is a basic subdivision of $t'$.
Then $\Phi(t) \neq t'$ contradicts the rigidity assumption.
\end{proof}

\begin{theorem}
\label{thm:np} If a similarity GIFS is rigid, then, for all
${\overleftarrow{\theta}}$, the balanced tiling $T(\overleftarrow{\theta})$ is
not periodic.
\end{theorem}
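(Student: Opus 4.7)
The plan is to argue by contradiction. Suppose $T(\oo\theta)$ is periodic; then there is a nonzero vector $\tau\in\R^d$ such that the translation $\Phi(x)=x+\tau$ is a symmetry of $T(\oo\theta)$. Since $(G,F)$ is rigid, Lemma~\ref{lem:hier2} (applied with $\oo{\theta'}=\oo\theta$) implies that $\Phi$ maps every tile in $\bigcup_{n\geq 0}\widehat T_n(\oo\theta)$ to a tile in the same union. I will combine this with the hierarchical structure and a uniform lower bound on tile size at level $n$ to force $\tau=0$.

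The first step is to record the nesting property of the balanced hierarchy: for any levels $n_1\leq n_2$ and any tiles $t_1\in\widehat T_{n_1}$, $t_2\in\widehat T_{n_2}$, either $t_1\subseteq t_2$ or $t_1$ has interior disjoint from $t_2$. This follows by induction on $n_2-n_1$ from Proposition~\ref{prop:hier}(4), because each tile of $\widehat T_{n_2}$ is a (possibly trivial) union of tiles of $\widehat T_{n_2-1}$ and hence, by iteration, a union of tiles of $\widehat T_{n_1}$; since $\widehat T_{n_1}$ is itself a tiling, $t_1$ either equals one of those pieces or has interior disjoint from all of them.

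The second step is a size bound at level $n$. A tile $t=f_{\oo\theta|k}\circ f_{\sigma}(A_{\sigma^+})\in\widehat T_n$ has linear scaling factor $s^{d(\sigma)-d(\oo\theta|k)}$, and the defining inequality $0<d(\sigma)-d(\oo\theta|k)+n\leq d(\sigma_*)\leq d_{\max}$ gives $s^{d_{\max}-n}\leq s^{d(\sigma)-d(\oo\theta|k)}<s^{-n}$. Since each attractor component $A_i$ has nonempty interior (the standing assumption of Section~\ref{sec:attractor}), there is a constant $\rho>0$, independent of $\oo\theta$ and of $n$, such that every tile in $\widehat T_n$ contains a Euclidean ball of radius at least $\rho\, s^{d_{\max}-n}$. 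Because $s<1$, this lower bound tends to $\infty$ with $n$.

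Finally I would choose $n$ large enough that every tile of $\widehat T_n$ contains a ball of radius strictly greater than $|\tau|$, and pick any $t\in\widehat T_n$. Then $\Phi(t)=t+\tau$ is a tile in some $\widehat T_m$, and by the size bound $t\cap \Phi(t)$ contains a ball of positive radius, hence has nonempty interior. By the nesting property, $t$ and $\Phi(t)$ are nested; since $\Phi$ is an isometry, $t$ and $\Phi(t)$ have equal positive Lebesgue measure, so neither can strictly contain the other, and therefore $t=\Phi(t)$. This forces $\tau=0$, contradicting the choice of $\Phi$. The main obstacles are the careful verification of cross-level nesting and the uniform inradius lower bound; once these technical points are in hand, the contradiction is immediate.
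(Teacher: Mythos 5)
Your proposal is correct and takes essentially the same route as the paper's (much terser) proof: a translation symmetry $\tau$, via Lemma~\ref{lem:hier2} applied with $\oo{\theta'}=\oo\theta$, must send tiles of the hierarchy to tiles of the hierarchy, and this is incompatible with the fact that the tiles of $\widehat T_n$ grow without bound, which you make precise through the inradius bound and cross-level nesting. The only cosmetic remark is that your final ``equal measure'' step is not needed (and as stated is slightly loose, since a strict inclusion with equal measure is not a priori impossible): once one of $t$, $t+\tau$ is contained in the other, compactness of the tile already forces $\tau=0$.
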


\begin{proof}
Assume, by way of contradiction, that $T = T(\overleftarrow{\theta})$ is
periodic. Then it has a translation symmetry $\tau$. By Lemma~\ref{lem:hier2},
the translation $\tau$ maps, for all $n$, tiles in $\widehat{T}_{n}$ to tiles
in $\bigcup_{n=0}^{\infty} \widehat{T}_{n}$. This leads to a contradiction since the size of the
tiles in $\widehat{T}_{n}$ eventually dwarfs the translational distance of
$\tau$.
\end{proof}

\subsection{When are two balanced tilings congruent?} \label{sec:cong}

Two tilings are said to be \textit{congruent} if there is a tile preserving isometry
taking one tiling to the other.

\begin{lemma}
\label{lem:up} If balanced tilings $T({\overleftarrow{\theta}})$ and
$T(\overleftarrow{\theta^{\prime}})$ for a rigid GIFS are congruent, then
there exist integers $K$ and $K^{\prime}$ such that $w^{K}%
({\overleftarrow{\theta}}) = w^{K^{\prime}}(\overleftarrow{\theta^{\prime}})$,
where $w$ is the shift map on parameter space.
\end{lemma}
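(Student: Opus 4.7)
The plan is to combine the hierarchy preservation supplied by Lemma~\ref{lem:hier2} with the commutative diagram of Theorem~\ref{thm:comm} to match the parameter tails of $\ot$ and $\ot'$. I would begin by fixing an isometry $\Phi\colon\R^d\to\R^d$ with $\Phi(T(\ot))=T(\ot')$, picking a point $p$ in the interior of a tile of $T(\ot)$, and setting $p':=\Phi(p)$. For each $n\ge 0$, let $t_n$ denote the unique tile of $\widehat T_n(\ot)$ containing $p$; by Lemma~\ref{lem:hier2}, $\Phi(t_n)$ is a tile $t'_n\in\widehat T_{m_n}(\ot')$, and after passing to a subsequence so that the $t_n$ are strictly nested, the levels $m_n$ strictly increase with $m_n,n\to\infty$.

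Writing each tile in canonical address form as in Section~\ref{sec:a},
\[
t_n=f_{\ot|k_n}\circ f_{\sigma_n}(A_{\sigma_n^+}),\qquad t'_n=f_{\ot'|k'_n}\circ f_{\sigma'_n}(A_{({\sigma'_n})^+}),
\]
the isometry condition $\mathrm{diam}(t_n)=\mathrm{diam}(t'_n)$, combined with the pinching $\mathrm{diam}(t_n)=\Theta(s^{-n})$ built into the definition of $\widehat S_{(n,k_n)}$, forces $m_n-n$ to be uniformly bounded and forces $k_n,k'_n\to\infty$. By iterating Theorem~\ref{thm:comm}, the hierarchy of $T(\ot)$ at level $k$, rescaled by $f_{\ot|k}^{-1}$, agrees with the balanced tiling of the shifted parameter $w^k(\ot)$; applied to a single tile, the restriction of $T(\ot)$ to $t_n$, with the outer similarity $f_{\ot|k_n}\circ f_{\sigma_n}$ stripped off, is a finite patch of $T(w^{k_n}(\ot))$ sitting inside the attractor component $A_{\sigma_n^+}$, and analogously for $T(\ot')$ inside $t'_n$.

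A pigeonhole argument then closes the picture: the combinatorial type of the decomposition of $t_n$ into all its hierarchy sub-tiles, together with the pair $(A_{\sigma_n^+},A_{({\sigma'_n})^+})$, takes only finitely many values (because $m_n-n$ is bounded, the edge $d$-values are bounded by $d_{\max}$, and there are finitely many attractor components). Hence there exist indices $n_1<n_2$ at which all this data coincides. Setting $K:=k_{n_1}$ and $K':=k'_{n_1}$, the induced correspondence between a patch of $T(w^K(\ot))$ and a patch of $T(w^{K'}(\ot'))$ extends consistently when the same local configuration reappears at $n_2$, and the matching of hierarchy tiles at every subsequent level yields the entry-by-entry equality $w^K(\ot)=w^{K'}(\ot')$.

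The main obstacle is making this final step precise, namely verifying that matching hierarchy tiles at all levels really does force agreement of the infinite parameter tails rather than merely of some finite prefix. Rigidity is essential here, since it is exactly what Lemma~\ref{lem:hier2} invokes to guarantee that the basic-subdivision pattern of any hierarchy tile is determined uniquely by its geometry; any disagreement between $w^K(\ot)$ and $w^{K'}(\ot')$ at some coordinate would manifest as an incompatibility between two basic subdivisions under $\Phi$, contradicting the hierarchy preservation and closing the argument.
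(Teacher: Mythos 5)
Your setup (the nested chain of hierarchy tiles through a fixed tile/point, Lemma~\ref{lem:hier2} to push it forward under $\Phi$, the observation that levels are only shifted by a bounded amount) matches the paper's opening moves, but the argument breaks down exactly where you flag it: the passage from ``matching hierarchy tiles at all levels'' to entry-by-entry equality of the tails is the whole content of the lemma, and neither the pigeonhole step nor the closing appeal to rigidity supplies it. The pigeonhole is aimed at the wrong target: finding $n_1<n_2$ at which the local combinatorial data along the chain for $\ot$ repeats is the kind of statement that would bear on recurrence or eventual periodicity of a single parameter, not on comparing $\ot$ with $\ot'$; and the claim that the correspondence ``extends consistently'' from such a repetition is precisely the unproved step. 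Likewise, saying that a disagreement between the tails ``would manifest as an incompatibility between two basic subdivisions'' is an assertion of the conclusion: two distinct edges can have the same $d$-value and produce congruent scaled tiles, so no metric or counting data (diameters, $d_{\max}$, number of components) can distinguish them, and you have not explained what geometric incompatibility would actually arise.

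The missing mechanism, which is how the paper closes the argument, is that the addresses of the nested chain literally contain the parameter: writing $t_0=f_{\ot|M}\circ f_{\widehat\sigma}(A_{\widehat\sigma^{+}})$ for arbitrarily large $M$, one has $\widehat\sigma=\theta_M\,\theta_{M-1}\cdots\theta_{k+1}\,\sigma$, so the successive tiles containing $t_0$ are indexed by truncations of $\widehat\sigma$ and their edge labels beyond $\sigma$ are exactly the entries of $\theta$. One then proves by induction (using the minimality of each $t_{n+1}$ over $t_n$, and Lemma~\ref{lem:hier2} applied to both $\Phi$ and $\Phi^{-1}$) that $\Phi$ carries this minimal chain for $t_0$ onto the minimal chain for $t_0'$, so corresponding tiles carry matched basic subdivisions; rigidity is invoked at that point to force the corresponding address edges (not merely components or scales) to coincide, i.e.\ $v_0'=v_0, v_1'=v_1,\dots$. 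Letting $M\to\infty$ this equates the reversed edges $\theta_{k+1},\theta_{k+2},\dots$ with the corresponding entries of $\theta'$ up to the fixed offset $m-m'$ coming from the lengths of $\sigma$ and $\sigma'$, which is exactly $w^{K}(\ot)=w^{K'}(\ot')$ with explicit $K,K'$. Your proposal never extracts the parameter edges from the tile addresses, so as written it establishes only that the two hierarchies are isomorphic as nested families of congruent tiles, which is strictly weaker than the lemma.
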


\begin{proof}
Given a rigid GIFS $(G,F)$ and a parameter $\theta\in\overleftarrow{\Sigma
}^{\infty}$, let $t_{0} \in T({\overleftarrow{\theta}}) = \widehat{T}_{0}$.
Define a sequence of tiles
\[
t_{0} \subsetneq t_{1} \subsetneq t_{2} \subsetneq t_{3} \subsetneq\cdots
\]
inductively as follows: $t_{n+1}$ is the minimal (with respect to containment)
tile properly containing $t_{n}$ that belongs to a tiling in the hierarchy
$\widehat{T}_{0}({\overleftarrow{\theta}}), \widehat{T}_{1}%
({\overleftarrow{\theta}}),\widehat{T}_{2}({\overleftarrow{\theta}}), \dots$.
For a given $k$, if $t_{0} = t(\theta, \mathcal{S}, k, \sigma) =
f_{\overleftarrow{\theta}|k}\circ f_{\sigma}(A_{\sigma^{+}})$, let the
vertices of $\sigma$ be $v_{0}, v_{1}, v_{2}, \dots, v_{m}$, going from the
end of $\sigma$ to its beginning (the root of the pre-tree). Then
\begin{equation}
\label{eq:embed1}t_{j} = f_{\overleftarrow{\theta}|k}\circ f_{\sigma
|m-j}(A_{v_{j}})
\end{equation}
for $0 \leq j \leq m$.

Now assume that there is an isometry $\Phi$ that takes tiling
$T({\overleftarrow{\theta}})$ to $T(\overleftarrow{\theta}^{\prime})$. Let
$t_{0} \in T({\overleftarrow{\theta}})$ and $t_{0}^{\prime}= \Phi(t_{0}) \in
T(\overleftarrow{\theta}^{\prime})$. Let
\[
t_{0} \subsetneq t_{1} \subsetneq t_{2}\subsetneq t_{3} \cdots\qquad\text{and}
\qquad t^{\prime}_{0} \subsetneq t^{\prime}_{1} \subsetneq t^{\prime}%
_{2}\subsetneq t^{\prime}_{3} \cdots
\]
be the respective hierarchical sequences of tiles defined in the previous
paragraph. With notation as above, the formula corresponding to
Equation~\eqref{eq:embed1} is, with sufficiently large $k$,
\begin{equation}
\label{eq:embed2}t_{j} ^{\prime}= f_{\overleftarrow{\theta^{\prime}}|k}\circ
f_{\sigma^{\prime}|m^{\prime}-j}(A_{v^{\prime}_{j}})
\end{equation}
for $0 \leq j \leq m^{\prime}$. 

We now prove by induction that  $t^{\prime}_{n} = \Phi(t_{n})$ for all $n\geq0$.   It
is true by assumption for $n=0$.  Assume that  $t^{\prime}_{n} = \Phi(t_{n})$.
By Lemma~\ref{lem:hier2}, $\Phi(t_{n+1})$ is a tile in $\bigcup_{n=1}^{\infty} \widehat T_n$. 
If $\Phi(t_{n+1}) = t_{n+1}'$, then we are done; otherwise there is a tile $t' \in \bigcup_{n=1}^{\infty} \widehat T_n$
such that $t_n' \subsetneq t' \subsetneq \Phi(t_{n+1})$.  But, again by Lemma~\ref{lem:hier2}, $\Phi^{-1} (t')$
is a tile in $\bigcup_{n=1}^{\infty} \widehat T_n$ such that $t_n \subsetneq \Phi^{-1} \subsetneq t_{n+1}$,
which contradicts the definition of $t_{n+1}$ as the minimal tile properly containing $t_{n}$.

Since the basic subdivision of $t_n$ is the same as the basic subdivision
of $t_n'$ for all $n\geq 1$, by rigidity we have $v_0' = v_0,  v_1' = v_1, v_2' = v_2, \dots$.
Assuming, without loss of generality, that path
$\sigma$ is at least as long as $\sigma^{\prime}$, this implies that the part
of $\sigma^{\prime}$ between the vertex $v^{\prime}_{m^{\prime}}$ to vertex
$v^{\prime}_{0}$ (the whole path $\sigma^{\prime}$) is the same path as the
part of $\sigma$ from vertex $v_{m^{\prime}}$ to vertex $v_{0}$.

Now take $M$ sufficiently large so that the following holds. Expressing $t_{0} = f_{\overleftarrow{\theta
}|M}\circ f_{\widehat{\sigma}}(A_{\widehat{\sigma}^{+}})$, it must be the case
that $\widehat{\sigma} = \theta_{M} \, \theta_{M-1} \cdots\theta_{k+1} \sigma
$. Similarly $\widehat{\sigma} ^{\prime}= \theta_{M} \, \theta_{M-1}
\cdots\theta_{k+1} \sigma^{\prime}$. But by the last sentence of the previous
paragraph this implies that
\[
-\theta_{k+1} \, -\theta_{k+2} \cdots-\theta_{M-(m-m^{\prime})} =
-\theta^{\prime}_{k+1 +(m-m^{\prime})} \, -\theta_{k+2+(m-m^{\prime})}
\cdots-\theta^{\prime}_{M} .
\]
Since $M$ can be arbitrarily large, applying the shift operator yields
\[
w^{k} ({\overleftarrow{\theta}}) = w^{k+(m-m^{\prime})} (\overleftarrow{\theta
^{\prime}}).
\]

\end{proof}

\begin{definition}
\label{def:e} For a given GIFS, call two parameters ${\overleftarrow{\theta}},
\overleftarrow{\theta^{\prime}}$ \textbf{equivalent} if there exist integers
$K, K^{\prime}$ such that $d ({\overleftarrow{\theta}}|K) = d
(\overleftarrow{\theta^{\prime}}|K^{\prime})$ and $w^{K}%
({\overleftarrow{\theta}}) = w^{K^{\prime}}(\overleftarrow{\theta^{\prime}})$,
where $w$ is the shift map.  In other words, beginning segments of $\ot$ have the
same $d$-value, and the tails are identical.  
\end{definition}

\begin{theorem}
\label{thm:main} For a given GIFS and parameters ${\overleftarrow{\theta}}$
and $\overleftarrow{\theta^{\prime}}$ the balanced tilings
$T({\overleftarrow{\theta}})$ and $T(\overleftarrow{\theta}^{\prime})$ are
congruent if and only if ${\overleftarrow{\theta}}$ and $\overleftarrow{\theta
^{\prime}}$ are equivalent. Moreover $\Phi=f_{\overleftarrow{\theta^{\prime}%
}|K^{\prime}} \circ(f_{{\overleftarrow{\theta}}|K})^{-1}$ is an isometry
taking $T({\overleftarrow{\theta}})$ to $T(\overleftarrow{\theta}^{\prime})$ .
\end{theorem}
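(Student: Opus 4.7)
The plan is to prove the two implications separately. The $(\Leftarrow)$ direction is a direct verification and simultaneously establishes the explicit formula claimed in the \emph{moreover} clause. The $(\Rightarrow)$ direction is essentially Lemma~\ref{lem:up} (which already supplies the shift equality $w^K(\overleftarrow{\theta}) = w^{K'}(\overleftarrow{\theta'})$) together with a scaling argument that extracts the remaining $d$-value equality from the fact that $\Phi$ is an isometry.

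For the sufficiency direction, suppose $\overleftarrow{\theta}$ and $\overleftarrow{\theta'}$ are equivalent with witnesses $K$ and $K'$, and set $\Phi := f_{\overleftarrow{\theta'}|K'} \circ (f_{\overleftarrow{\theta}|K})^{-1}$. The first step is to observe that $\Phi$ is an isometry: it is a composition of similarities with scaling ratios $s^{-d(\overleftarrow{\theta'}|K')}$ and $s^{d(\overleftarrow{\theta}|K)}$, which cancel because of the assumed $d$-value equality. The second step is to show $\Phi$ sends tiles of $T(\overleftarrow{\theta})$ bijectively onto tiles of $T(\overleftarrow{\theta'})$. Pick a tile $t = f_{\overleftarrow{\theta}|k} \circ f_{\sigma}(A_{\sigma^+})$ with $\sigma \in S_k(\overleftarrow{\theta})$; by the nesting in \eqref{eq:tiling}, I may assume $k \geq K$. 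Writing $\psi := w^K(\overleftarrow{\theta}) = w^{K'}(\overleftarrow{\theta'})$ for the common tail, the composition telescopes to
\[
\Phi(t) = f_{\overleftarrow{\theta'}|K'} \circ f_{\psi|(k-K)} \circ f_{\sigma}(A_{\sigma^+}) = f_{\overleftarrow{\theta'}|(k-K+K')} \circ f_{\sigma}(A_{\sigma^+}).
\]
It remains to verify $\sigma \in S_{k-K+K'}(\overleftarrow{\theta'})$: the start-vertex condition is inherited from the tail equality, while the balanced inequality \eqref{eq:4} translates identically because $d(\overleftarrow{\theta'}|(k-K+K')) = d(\overleftarrow{\theta'}|K') + d(\psi|(k-K)) = d(\overleftarrow{\theta}|k)$. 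The symmetric argument with $\Phi^{-1}$ then shows tile bijectivity.

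For the necessity direction, suppose $T(\overleftarrow{\theta})$ and $T(\overleftarrow{\theta'})$ are congruent via an isometry $\Phi$. Lemma~\ref{lem:up} (applied in the rigid setting) supplies integers $K$ and $K'$ with $w^K(\overleftarrow{\theta}) = w^{K'}(\overleftarrow{\theta'})$, so only the $d$-value equality remains. Retracing the proof of Lemma~\ref{lem:up}, one can arrange that a base tile $t = f_{\overleftarrow{\theta}|K}\circ f_{\sigma}(A_{\sigma^+})$ has image $\Phi(t) = f_{\overleftarrow{\theta'}|K'}\circ f_{\sigma}(A_{\sigma^+})$ with the \emph{same} internal path $\sigma$. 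Then $\Phi$ restricted to $t$ coincides with the similarity $f_{\overleftarrow{\theta'}|K'}\circ(f_{\overleftarrow{\theta}|K})^{-1}$, whose scaling ratio is $s^{d(\overleftarrow{\theta}|K)-d(\overleftarrow{\theta'}|K')}$. Since $\Phi$ is an isometry, this ratio must equal $1$, yielding $d(\overleftarrow{\theta}|K) = d(\overleftarrow{\theta'}|K')$.

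The main obstacle is the index-alignment step in the necessity direction: one must argue carefully that the integers $K$ and $K'$ produced by Lemma~\ref{lem:up} can be chosen so that a base tile $t$ and its image $\Phi(t)$ are represented by the \emph{same} path $\sigma$ over the same attractor component. Once this identification is secured, the scaling cancellation is immediate, and the rest of the argument is routine bookkeeping with indices along the shifted common tail.
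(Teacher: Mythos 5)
Your sufficiency direction is essentially the paper's own argument (telescoping $(f_{\overleftarrow{\theta}|K})^{-1}\circ f_{\overleftarrow{\theta}|k}$ along the common tail and checking that the balanced condition \eqref{eq:4} is preserved because $d(\overleftarrow{\theta'}|(k-K+K'))=d(\overleftarrow{\theta}|k)$), and it is correct.

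The necessity direction, however, has a genuine gap precisely at the point you flag as ``the main obstacle'' and then do not resolve. Retracing Lemma~\ref{lem:up} does \emph{not} let you ``arrange'' that a tile $t$ and its image $\Phi(t)$ are represented over the same internal path $\sigma$ at the levels $K,K'$ produced by that lemma. What the lemma's argument (via the hierarchical sequences $t_0\subsetneq t_1\subsetneq\cdots$ and rigidity) actually yields is that the two internal paths agree only on their portions nearest the tile: writing them from a common root $x$ on the shared tail, one gets $\sigma$ and $\sigma'=\sigma''\,\sigma$, where $\sigma''$ is a closed path at $x$ with $d(\sigma'')=d(\beta)-d(\alpha)$ equal to the very discrepancy you are trying to kill. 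For the $K,K'$ coming out of Lemma~\ref{lem:up} the equality $d(\overleftarrow{\theta}|K)=d(\overleftarrow{\theta'}|K')$ can simply fail; your isometry/scaling comparison applied honestly at this stage gives only $d(\overleftarrow{\theta'}|K')-d(\overleftarrow{\theta}|K)=d(\sigma'')\ge 0$, not the required equality. The missing idea — and the heart of the paper's proof — is to show that $\sigma''$ is forced to coincide with a segment $\phi''=\phi_q\cdots\phi_{k+1}$ of the common tail (by representing the same pair of tiles at deeper levels and comparing tails), and then to \emph{re-choose} the shift indices, $K=|\alpha|+k+|\phi''|$ and $K'=|\beta|+k$, so that both $w^K(\overleftarrow{\theta})=w^{K'}(\overleftarrow{\theta'})$ and $d(\overleftarrow{\theta}|K)=d(\overleftarrow{\theta'}|K')$ hold simultaneously. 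This is not routine index bookkeeping; without it your claimed ``same $\sigma$'' alignment is unsupported, and with it the scaling cancellation you invoke becomes unnecessary in that form (the $d$-value identity drops out of $d(\phi'')=d(\beta)-d(\alpha)$). You should supply this tail-identification argument to complete the proof.
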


\begin{proof}
First assume that ${\overleftarrow{\theta}}$ and $\overleftarrow{\theta
^{\prime}}$ are equivalent. First note that $\Phi$ is an isometry because $d
({\overleftarrow{\theta}}|K) = d (\overleftarrow{\theta^{\prime}}|K^{\prime}%
)$. Suppose that $K, K^{\prime}$ are such that $d ({\overleftarrow{\theta}}|K)
= d (\overleftarrow{\theta^{\prime}}|K^{\prime})$ and $w^{K}%
({\overleftarrow{\theta}}) = w^{K^{\prime}}(\overleftarrow{\theta^{\prime}})$.
Assume, without loss of generality, that $k > \max(K,K^{\prime})$, and let $t
= f_{{\overleftarrow{\theta}}|k} \circ f_{\sigma} (A_{\sigma^{+}})$ be an
arbitrary tile in $T({\overleftarrow{\theta}})$. Then
\[
\begin{aligned}  \Phi(t) &= f_{\oo{\theta '}|K'} \circ ( f_{\ot|K}) ^{-1}\circ f_{\ot|k} \circ f_{\sigma} (A_{\sigma^+}) \\
&= f_{\oo{\theta '}|K'} \circ (f_{\theta_{K+1}})^{-1} \circ (f_{\theta_{K+2}})^{-1} \circ \cdots \circ
(f_{\theta_{k}})^{-1} \circ f_{\sigma} (A_{\sigma^+})  \\
&=  f_{\oo{\theta '}|K'} \circ (f_{\theta_{K'+1}})^{-1} \circ (f_{\theta_{K'+2}})^{-1} \circ \cdots \circ
(f_{\theta_{k+(K'-K)}})^{-1} \circ f_{\sigma} (A_{\sigma^+}) \\
&=  f_{\oo{\theta '}| (k+K'-K)} \circ f_{\sigma} (A_{\sigma^+}),\end{aligned}
\]
which is a tile in $T(\overleftarrow{\theta}^{\prime})$ because
\[
\begin{aligned} d( f_{\oo{\theta '}| (K'+k-K)} ) &= d( f_{\oo{\theta '}| K'}) + d(\ot|k) - d(\ot|K) =
d( f_{\oo{\theta }| K}) + d(\ot|k) - d(\ot|K) \\
&= d(\ot|k). \end{aligned}
\]

In the other direction, assume that there exists an isometry $\Phi$ that takes
$T({\overleftarrow{\theta}})$ to $T(\overleftarrow{\theta}^{\prime})$. By
Lemma~\ref{lem:up}, we may assume that ${\overleftarrow{\theta}} =
\overleftarrow{\alpha} \overleftarrow{\phi}$ and $\overleftarrow{\theta
^{\prime}} = \overleftarrow{\beta} \overleftarrow{\phi}$, where
$\overleftarrow{\alpha}, \overleftarrow{\beta} \in\Sigma^{*}$ and
$\overleftarrow{\phi}\in\Sigma^{\infty}$. If $d(\alpha) = d(\beta)$, then the
proof is complete. Otherwise, assume that $d(\alpha) < d(\beta)$. Let
$k(\alpha) := |\alpha|$ and $k(\beta):= |\beta|$. Let $t \in
T({\overleftarrow{\theta}}) $ and $t^{\prime}= \Phi(T({\overleftarrow{\theta}%
}))$. Choose $k > \max\, (k(\alpha), k(\beta))$ and such that $t\in
T({\overleftarrow{\theta}},k(\alpha) +k)$ and $t^{\prime}\in
T(\overleftarrow{\theta^{\prime}}, k(\beta) +k)$. We have
\[
t = f_{\overleftarrow{\theta}|k(\alpha)+ k}\circ f_{\sigma}(A_{\sigma^{+}})
\qquad\text{and} \qquad t^{\prime}= f_{\overleftarrow{\theta^{\prime}}%
|k(\beta)+k}\circ f_{\sigma^{\prime}}(A_{(\sigma^{\prime+}}),
\]
The last vertex, call it $x$, of the path $\overleftarrow{\phi}|k$, is the
first vertex of both $\sigma$ and $\sigma^{\prime}$ and is the root of the
corresponding pre-trees.

In what follows we refer to the notation and results in the proof of
Lemma~\ref{lem:up}. Since $d(\alpha) < d(\beta)$. the paths can be expressed
as
\[
\begin{aligned} \sigma &= \sigma_1 \, \sigma_2 \, \cdots \sigma_m \\
\sigma ' &= \sigma^{\prime \prime} \, \sigma_1 \, \sigma_2 \, \cdots \sigma_m , \end{aligned}
\]
where $\sigma^{\prime\prime}$ must be a closed path starting and ending at
vertex $x$. Moreover, since $t$ and $t^{\prime}$ are congruent, it must be the
case that $d(\sigma^{\prime\prime}) = d(\beta) - d( \alpha)$. Now, with $j$ an
integer such that $j > k + |\sigma^{\prime\prime}|$ we have
\[
\begin{aligned} \sigma &= \phi_{j} \, \phi_{j-1} \, \cdots \phi_{k+1} \, \sigma \\
\sigma ' &= \phi_{j} \, \phi_{j-1} \, \cdots \phi_{k+1} \, \sigma^{\prime \prime} \, \sigma .\end{aligned}
\]
Since the tails of these paths must be the same, we have $\phi^{\prime\prime}
:= \phi_{q} \, \phi_{q-1} \, \cdots\phi_{k+1} = \sigma^{\prime\prime},$ where
$q = k+|\sigma^{\prime\prime}|$. Therefore
\[
\begin{aligned}
\ot &= \oo{\alpha} \, \oo{\phi} = \oo{\alpha} \, \oo{\phi|k} \, \oo{ \phi ^{\prime \prime} } \, w^q(\oo{\phi}) \\
\oo{\theta '} &= \oo{\beta} \, \oo{\phi} = \oo{\beta} \, \oo{\phi|k} \, \oo{ \phi ^{\prime \prime} } \, w^q(\oo{\phi} ).
\end{aligned}
\]
Recall that $\phi^{\prime\prime} = \sigma^{\prime\prime}$ is a closed path and
that $d(\phi^{\prime\prime}) = d(\sigma^{\prime\prime} ) = d(\beta) - d(
\alpha)$. Letting $K =|\alpha|+k+|\phi^{\prime\prime}|$ and $K^{\prime}=
|\beta| + k|$, this implies that
\[
w^{K}({\overleftarrow{\theta}}) = w^{|\alpha|+k+|\phi^{\prime\prime}|
}({\overleftarrow{\theta}}) = w^{|\beta| + k} (\overleftarrow{\theta^{\prime}%
}) = w^{K^{\prime}}(\overleftarrow{\theta^{\prime}})
\]
and
\[
\begin{aligned}
d( \ot|K) &= d(\alpha) + d(\phi|k) + d(\phi^{\prime \prime}) = d(\alpha) + d(\phi|k) + d(\beta)-d(\alpha) =
d( \beta) + d(\phi|k) \\ &= d( \oo{\theta '}|K'),\end{aligned}
\]
proving that ${\overleftarrow{\theta}}$ and $\overleftarrow{\theta^{\prime}}$
are equivalent.
\end{proof}

\subsection{When are there uncountably many balanced
tilings?}

\begin{theorem}
\label{thm:u} If a similarity GIFS $(G,F)$ is rigid, then there are an uncountable
number of balanced tilings for $(G,F)$ up to congruence.
\end{theorem}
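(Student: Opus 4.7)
The plan is to invoke Theorem~\ref{thm:main} (which for a rigid GIFS identifies congruence classes of balanced tilings with $\sim$-equivalence classes of parameters) and so reduce Theorem~\ref{thm:u} to showing that $\oo\Sigma^\infty/\!\sim$ is uncountable. I will accomplish this in two steps: the parameter space $\oo\Sigma^\infty$ is uncountable, while each equivalence class $[\ot]$ is at most countable. The main obstacle is the first step---using the blanket nonempty-interior assumption to exclude a degenerate cyclic structure in $G$ and thereby extract a branching vertex in $\oo G$; the rest is routine cardinality bookkeeping.

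To see that $\oo\Sigma^\infty$ is uncountable, I first rule out the possibility that $G$ is a single directed cycle. If $G$ were an $n$-cycle $v_1\to v_2\to\cdots\to v_n\to v_1$ with corresponding edges $e_1,\dots,e_n$, then the attractor equation $A_i = f_{e_i}(A_{i+1})$ iterates to $A_i = (f_{e_i}\circ\cdots\circ f_{e_{i+n-1}})(A_i)$, so each $A_i$ is the unique fixed point of a contraction---a single point, contradicting the standing assumption that each $A_i$ has nonempty interior. A short counting argument then applies: if every vertex of the strongly connected $G$ had in-degree $\leq 1$, then $|E|\leq |V|$, and combining with the lower bound $|E|\geq|V|$ coming from out-degrees $\geq 1$ forces $G$ to be a disjoint union of cycles---hence a single cycle. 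So some vertex $v$ of $\oo G$ has out-degree $\geq 2$, and by strong connectivity of $\oo G$ one can construct infinite paths that return to $v$ infinitely often and branch freely at each return, giving a Cantor-style embedding $\{0,1\}^{\N}\hookrightarrow \oo\Sigma^\infty$. By Theorem~\ref{thm:filling}, uncountably many such paths are filling.

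For countability of $[\ot]$, I unpack Definition~\ref{def:e}: $\ot'\in[\ot]$ iff there exist integers $K,K'$ with $w^{K'}(\ot')=w^K(\ot)$ and $d(\ot'|K')=d(\ot|K)$. Fixing $K$ determines the tail $w^K(\ot)$, so $\ot'$ is specified by its prefix $\ot'|K'$, a finite path in $\oo G$ ending at the vertex $\theta_K^-$ with prescribed $d$-value $d(\ot|K)$. Since $d(e)\geq 1$ for every edge, such a prefix has length at most $d(\ot|K)$, and the finite digraph $\oo G$ has only finitely many paths of bounded length; so only finitely many candidates arise for each $K$, and the union over $K\in\N$ gives $|[\ot]|\leq \aleph_0$. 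Combining with the previous step, $\oo\Sigma^\infty/\!\sim$ is uncountable, and Theorem~\ref{thm:main} transports this to uncountably many balanced tilings up to congruence.
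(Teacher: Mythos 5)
Your overall strategy---reduce via Theorem~\ref{thm:main} to counting equivalence classes of parameters, then show each class is countable while the supply of parameters is uncountable---is essentially the paper's, and parts of your write-up are actually more explicit than the paper: the countability of a class $[\ot]$ (fixing $K$ fixes the tail, and since $d(e)\geq 1$ the prefix has length at most $d(\ot|K)$, so finitely many candidates per $K$) is correct and supplies a detail the paper only asserts, and the exclusion of the single-cycle case via the nonempty-interior assumption, together with the branching-vertex Cantor embedding, correctly shows $\oo\Sigma^\infty$ is uncountable.

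The gap is the sentence ``By Theorem~\ref{thm:filling}, uncountably many such paths are filling.'' Balanced tilings are defined only for filling parameters (Definition~\ref{def:balanced2}), so what the theorem needs is uncountably many pairwise inequivalent \emph{filling} parameters, and Theorem~\ref{thm:filling} does not give this for your family: part (1) says filling parameters are dense in $\oo\Sigma^\infty$, and part (2) says a randomly chosen parameter is filling with probability one, but neither statement controls an arbitrary uncountable subset such as your Cantor set, which may be nowhere dense and of measure zero and could a priori lie entirely inside the exceptional set of non-filling parameters. Even the weaker claim that the set of all filling parameters is uncountable does not follow from the almost-sure statement without knowing that no countable set carries full measure (non-atomicity of the Markov measure), which you do not address. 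The paper closes exactly this hole by a different device: it calls a parameter disjunctive if every finite path occurs as a subpath, notes that disjunctive parameters are filling (by the argument of \cite{BV1}), and cites Staiger \cite{Sta} to obtain uncountably many disjunctive parameters distributed over uncountably many equivalence classes. Your argument can be repaired either by adopting that route, or by modifying your construction so that every constructed path is disjunctive (interleave an enumeration of all finite paths between the branchings at $v$), or by proving non-atomicity of the Markov measure once $G$ is not a single cycle, so that the measure-one set of filling parameters is uncountable and, each equivalence class being countable, must meet uncountably many classes; as written, though, the filling step is a genuine gap.
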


\begin{proof}
Let ${\overleftarrow{\theta}}$ be a parameter of the GIFS. Define
${\overleftarrow{\theta}}$ to be \textit{disjunctive} if any finite path
$\omega\in\Sigma^{*}$ appears as a subpath of ${\overleftarrow{\theta}}$,
i.e., there are paths $\alpha\in\Sigma^{*}, \, \sigma\in\Sigma^{\infty}$, such
that ${\overleftarrow{\theta}} = \alpha\, \omega\, \sigma$. By a proof as in
\cite{BV1}, a disjunctive parameter is filling. The set $X$ of parameters
modulo equivalence is uncountable. By \cite{Sta}, the set $X$ contains
uncountably many disjunctive parameters. Therefore by Theorem~\ref{thm:main},
there are an uncountable number of parameters that are simultaneously
disjunctive and pairwise have the distinct tails property.
\end{proof}

\vskip 2mm

\section*{Acknowledgements} 
This work was partially supported by a grant from the Simons Foundation (\#322515 to Andrew Vince). 
\vskip 2mm

\end{document}